\newtheorem{lem}{Lemma}[section]
\newtheorem{rem}{Remark}[section]
\title{
Structure-preserving weighted BDF2 methods for Anisotropic Cahn-Hilliard model: uniform/variable-time-steps}
\author{Meng Li\thanks{School of Mathematics and Statistics, Zhengzhou University, Zhengzhou 450001, China.
Email: limeng@zzu.edu.cn. }
\and Jingjiang Bi\thanks{School of Mathematics and Statistics, Zhengzhou University, Zhengzhou 450001, China. }
\and Nan Wang\thanks{School of Mathematics and Statistics, Zhengzhou University, Zhengzhou 450001, China.
Corresponding author: nwang@zzu.edu.cn. }
}
\date{}
\begin{document}
\sloppy
\maketitle

\begin{abstract}
In this paper, we innovatively develop uniform/variable-time-step weighted and shifted BDF2 (WSBDF2) methods for the anisotropic Cahn-Hilliard (CH) model, combining the scalar auxiliary variable (SAV) approach with two types of stabilized techniques.
Using the concept of $G$-stability,
the uniform-time-step WSBDF2 method is theoretically proved to be energy-stable. Due to the inapplicability of the relevant G-stability properties, another technique is adopted in this work to demonstrate the energy stability of the variable-time-step WSBDF2 method. In addition, the two numerical schemes are all mass-conservative.
 Finally, numerous numerical simulations are presented to demonstrate the stability and accuracy of these schemes.
\end{abstract}


\begin{keywords}
Uniform-time-step scheme, Variable-time-step scheme,  Weighted and shifted BDF2 method, G stability, Energy stability
\end{keywords}

\begin{AMS}
65N30, 65N06, 65N12
\end{AMS}


\section{Introduction}
As the typical phase-field, the Cahn-Hilliard (CH) equation, initially introduced to describe phase separation in binary alloys, is pivotal in materials science, especially for elucidating the qualitative features of two-phase systems under conditions of isotropy and constant temperature.
Recently, the CH model has found widespread applications beyond its initial purpose, extending to surface diffusion of adatoms in stressed epitaxial thin films \cite{r2006surface,eggleston2002ordered,wise2005quantum}, dealloying in corrosion processes \cite{erlebacher2001evolution}, vesicle dynamics \cite{campelo2006dynamic}, tumor growth \cite{aristotelous2015adaptive,frieboes2007computer}, multiphase fluid flow \cite{lowengrub1998quasi,kim2004conservative,badalassi2003computation}, bacterial films formation \cite{klapper2006role} and image processing \cite{bertozzi2006inpainting,dolcetta2002area,chalupecky2004numerical}. More applications and numerical methods about the phase-field model can refer to Refs.  \cite{miranville2019cahn,novick2008cahn,du2019maximum,du2021maximum}.

The anisotropic CH equation, proposed by Torabi et al. \cite{wise2007solving,torabi2009new}, aims to describe the phenomenon of faceted pyramid formation on nanoscale crystal surfaces. In the anisotropic system, there exists a term $\gamma(\nabla\phi/|\nabla\phi|)$ that changes its sign as $|\nabla\phi|$ is close to zero, leading to severe oscillations and posing significant challenges to the development of algorithms for this system.
In \cite{wise2007solving,torabi2009new}, fully-implicit methods were employed to handle the nonlinear terms in the anisotropic CH system.
However, the methods lack the proof of energy stability, and are computationally expensive due to the iterative requirement of the implicit schemes.
In \cite{chen2013efficient,shen2018stabilized}, using stablilization technique, the Shen et al. established energy stable schemes for the isotropic and anisotropic CH models. All the nonlinear terms are explicitly handled by these schemes, which significantly enhances computational efficiency.
However, the proof of energy stability only limits to the isotropic case.
Yang et al. \cite{chen2019fast,xu2019efficient} developed
second-order stabilized scalar auxiliary variable (SAV) method and stabilized invariant energy 1uadratization (IEQ) method for the anisotropic CH equation, and demonstrated the energy stability of the schemes. As a development of \cite{chen2019fast},
the first contribution of this work is the construction of uniform-time-step weighted and shifted BDF2 method integrated with the SAV approach for the anisotropic CH model, and derive the mass conservation and energy stability of the numerical scheme by applying the property of G-stability \cite{hairer1996g}. The work \cite{chen2019fast} is a special case of this paper, with the weighted parameter $\theta=1$. Nonetheless, there exist substantial differences in the theoretical analytical approaches.

In recent years, variable-time-step methods are popular to solve nonlinear partial differential equations (PDEs) in physical problems.Compared with the uniform temporal mesh for PDEs, variable-time-step methods and adaptive time-stepping techniques \cite{gomez2011provably,liao2021analysis1,qiao2011adaptive,zhang2012adaptive,di2022sharp,zhang2022sharp} could better capture the multi-scale behaviors of the solutions in long time simulation and  improve the computational efficiency under the same accuracy of numerical schemes. For variable-step BDF2 method, Grigorieff \cite{grigorieff1983stability} proved that the zero-stability of this method for ODEs under the adjacent time step ratio $\gamma_{n+1}:=\tau_{n+1}/\tau_n<1+\sqrt{2}$ . In \cite{becker1998second}, Becker conducted a rigorous stability and convergence analysis of the variable-step BDF2 scheme for linear parabolic PDEs under the restriction $\gamma_{n}<(2+\sqrt{13})/3\approx 1.8685$. Chen at el. \cite{chen2019second} studied the numerical scheme for CH model under the time step ratio $\gamma_n<3.561$. Liao and Zhang \cite{liao2021analysis} by using discrete orthogonal convolution (DOC) kernel technology, they studied the condition with $0<r_{k}\leq 4.864$.
  Zhang and Zhao \cite{zhang2021sharp}  obtained a new adjacent time-step ratio
  $0< r_{k} \leq r_{\max}-\delta$ for any small constant $0<\delta<r_{\max}\approx4.8645$ by using the properties of DOC kernel  and  discrete complementary convolution (DCC) kernel. Recently, Hou and Qiao \cite{hou2023implicit} proposed a variable-time-step BDF2 SAV scheme
to solve the phase field crystal equation under the maximum time-step ratio $0<r_{\max}\leq 4.864$.
Interested readers can refer to \cite{zhang2022sharp,liao2021analysis,liao2018sharp,MR4206656,chen2021weighted,akrivis2024variable,liu2023positivity,li2023variable,wang2023unconditional,zhang2013adaptive} for the variable-time-step methods applied to the divergent models, including parabolic equation, phase-field model and nonlinear Ginzburg-Landau equation. Up to now, there exists no work focusing on the variable-time-step methods for the anisotropic CH system, which is another contribution of this work.

To sum up, the main contributions of this work include: \\
(i) This work applies the uniform-time-step WSBDF2 method to solve the anisotropic CH model. We extend the BDF2 method in \cite{chen2019fast} to a more general form, allowing us to obtain various discretization schemes by adjusting the value of $\theta$. Additionally, by incorporating the concept of G-stability \cite{hairer1996g}, we demonstrate the energy stability of the proposed uniform-time-step WSBDF2 method.\\
(ii)
The other main contribution of this work is to design a new structure-preserving method
by combining the variable-time-step WSBDF2 method with the SAV approach proposed in \cite{huang2020highly}. The adopted SAV approach differs from the traditional one \cite{shen2018scalar}, in order to construct the energy-stable algorithm based on variable-time-step method. By using different analytical technique as the uniform case, we also successfully obtain the mass conservation and energy stability of the constructed variable-time-step WSBDF2 method.
\\
(iii) Furthermore, in order to eliminate the ill-posedness of the anisotropic model,  it is indeed necessary to incorporate regularization terms in the continuous systems, especially for the strongly anisotropic case. The regularized models include the linear regularization and the Willmore regularization.
In the process of practical computation, aiming to reduce the oscillations caused by the anisotropic surface energy function $\gamma(\cdot)$,
we develop two stabilized methods for the uniform/variable-time-step WSBDF2 methods by adding two types of stabilization terms.
The numerical experiments demonstrate that adding stabilization terms can maintain numerical stability without affecting the accuracy and structure-preservation of the solutions.

The rest of the paper is organized as follows. In Section \ref{section_2}, We provide a brief introduction to the anisotropic CH model and its regularized systems. In Section \ref{section_3}, two uniform-time-step numerical schemes are developed to the regularized systems,
and their mass-conservation and energy-stability properties are rigorously proven.
In Section \ref{section_4}, we develop two variable-time-step methods for solving the regularized systems, and also demonstrate their structure-preserving properties. Several numerical examples are given in section \ref{section_5}.
Some conclusions follow in Section \ref{section_6}.

\section{Anisotropic Cahn–Hilliard model equation and its energy law}\label{section_2}
In this section, we provide a brief introduction to the anisotropic CH model. Let $\Omega\in\mathbb{R}^{d}\;(d=2,3)$ be a smooth, open, bounded and connected domain, and let $\phi$ be an order parameter that takes values of $\pm1$ in the two phases with a smooth transition layer of thickness $\epsilon$. The free energy density of Ginzburg-Landau-type is given by
 \begin{align}
    \mathcal{F}(\phi)=\gamma(\mathbf{n})\left(\frac{1}{2}|\nabla\phi|^2+\frac{1}{\epsilon^{2}}F(\phi)\right),
 \end{align}
 where $\gamma(\mathbf{n})$ is a function describing the anisotropic property and $\mathbf{n}$ is the interface normal
defined by
\begin{align}
    \mathbf{n}=\left(n_1,n_2\right)^T=\frac{\nabla\phi}{|\nabla\phi|}\qquad or \qquad\mathbf{n}=\left(n_1,n_2,n_3\right)^T=\frac{\nabla\phi}{|\nabla\phi|},
\end{align}
and the energy density function takes the usual double-well form
\begin{align}
    F(\phi)=\frac{1}{4}\left(\phi^2-1\right)^2.
\end{align}
Then, the surface free energy of the system is as follow
\begin{align}\label{eqn:free_energy_1}
    \mathcal{E}(\phi)=\int_{\Omega}\mathcal{F}d\Omega.
\end{align}
The difference between isotropic system and  anisotropic system lies in the choice of $\gamma(\mathbf{n})$. When $\gamma(\mathbf{n})\equiv1$, the system reflects isotropic properties. In the case of anisotropy, $\gamma(\mathbf{n})$ varies with $\mathbf{n}$ in a nontrivial way. In this paper, we consider the fourfold symmetric anisotropic function
\begin{align}\label{eqn:anisotropic_property}
    \gamma(\mathbf{n})=1+\alpha \cos(4\vartheta)=1+\alpha\left(4\sum^{d}_{i=1}n_i^4-3\right),
\end{align}
where $\vartheta$ represents the orientation angle of the interfacial normal to the interface, and the non-negative parameter $\alpha$ in \eqref{eqn:anisotropic_property} describes the intensity of anisotropy. When $\alpha=0$, an isotropic system is attained, namely, the free energy shows no preference for any orientations. When $\alpha\ge\frac{1}{15}$, the system becomes strongly anisotropic; i.e., the underlying Phase-filed equation is ill-posed. In order to regularize the original problem, an additional term $\mathrm{G}(\phi)$, is introduced into \eqref{eqn:free_energy_1} to penalize infinite curvatures in the resulting corners. As a result, the total system energy is modified as follow
\begin{align}\label{eqn:total_energy}
    \mathcal{E}(\phi)=\int_{\Omega}\left(\mathcal{F}+\frac{\beta}{2}\mathrm{G}\right)d\Omega,
\end{align}
where $\beta$ is a regularization parameter. We consider two types of regularization with two distinct forms of $\mathrm{G}$.\par
The first type is the linear regularization based on the bi-Laplacian of the phase variable
\begin{align}
    \mathrm{G}(\phi)=(\Delta\phi)^2,
\end{align}
and the second one is the nonlinear Willmore regularization with
\begin{align}
    \mathrm{G}(\phi)=\left(\Delta\phi-\frac{1}{\epsilon^2}f(\phi)\right)^2,
\end{align}
where $f(\phi)=F'(\phi)=\phi(\phi^2-1)$. By taking the $H^{-1}$ gradient flow on the total system free energy \eqref{eqn:total_energy}, the anisotropic CH system with linear regularization is given by
\begin{subequations}\label{eqn:linear_gradient_flow}
\begin{align}
    \phi_t&=\nabla\cdot(M(\phi)\nabla\mu),\label{eqn:linear_1}\\
    \mu&=\frac{1}{\epsilon^2}\gamma(\mathbf{n})f(\phi)-\nabla\cdot\mathbf{m}+\beta\Delta^2\phi;\label{eqn:linear_2}
\end{align}
\end{subequations}
and the system with Willmore regularization is represented by
\begin{subequations}\label{eqn:willmore_gradient_flow}
    \begin{align}
       \phi_t&=\nabla\cdot(M(\phi)\nabla\mu),\label{eqn:willmore_1}\\
       \mu&=\frac{1}{\epsilon^2}\gamma(\mathbf{n})f(\phi)-\nabla\cdot\mathbf{m}+\beta\left(\Delta-\frac{1}{\epsilon^2}f'(\phi)\right)\left(\Delta\phi-\frac{1}{\epsilon^2}f(\phi)\right),\label{eqn:willmore_2}
    \end{align}
\end{subequations}
where chemical potential $\mu$ is the variational derivative of $\mathcal{E}(\phi)$, $f'(\phi)=3\phi^2-1$, $M(\phi)\geq M_0>0$ is the mobility function that depends on the phase variable $\phi$ or a constant, and the vector
field $\mathbf{m}$ is given by
\begin{align}
    \mathbf{m}=\gamma(\mathbf{n})\nabla\phi+\frac{\mathbb{P}\nabla_\mathbf{n}\gamma(\mathbf{n})}{|\nabla\phi|}\left(\frac{1}{2}|\nabla\phi|^2+\frac{1}{\epsilon^2}F(\phi)\right)
\end{align}
with $\mathbb{P}=\mathbb{I}-\mathbf{n}\mathbf{n}^T$. Without loss of generality, periodic boundary conditions are employed to eliminate the complexities associated with the boundary integrals. We also note that the boundary conditions can be of the no-flux type as
\begin{align}
    \left.\frac{\partial\phi}{\partial\mathbf{n}}\right|_{\partial\Omega}=\left.\frac{\partial\mu}{\partial\mathbf{n}}\right|_{\partial\Omega}=\left.\frac{\partial\omega}{\partial\mathbf{n}}\right|_{\partial\Omega}=0,
\end{align}
where $\omega=\Delta\phi$ for the system with linear regularization and $\omega=\Delta\phi-\frac{1}{\epsilon^2}f(\phi)$ for the system with Willmore regularization .\par
For the linear regularization system, by taking the $L^2$ inner product of equation \eqref{eqn:linear_1} with $-\mu$ and equation \eqref{eqn:linear_2} with $\phi_t$, employing integration by parts, and subsequently combining the two obtained equalities, one derives that the system \eqref{eqn:linear_gradient_flow} admits
the following energy  dissipative law
\begin{align}
     \frac{d}{dt}\mathcal{E}(\phi)=-\left\|\sqrt{M(\phi)}\nabla\mu\right\|^2\leq0,
\end{align}
where $\|\cdot\|$ is $L^2$-morm.
Additionally, the CH model preserves local mass density. Indeed, by taking the $L^2$ inner product of equation \eqref{eqn:linear_1} with $1$, the mass conservation property can be directly obtained using integration by parts, as follow
\begin{align}
    \frac{d}{dt}\int_{\Omega}\phi d\Omega=0.
\end{align}
\par
For the Willmore regularization system, the law of dissipation energy remains valid, while also conserving local mass density.

\section{Numerical schemes on uniform temporal mesh}\label{section_3}
In this section, the schemes of uniform-time-step WSBDF2 method are built and studied. Let $\{t^n|t^n=n\tau,0\leq n\leq N\}$ be the time nodes on the interval $[0,T]$ with the uniform time step $\tau=T/N$, and denote $u^n$ as the numerical solution of $u(t^{n})$ for any function $u(t)$. Then, we define
\begin{align}\label{eqn:WBDF2}
&D_{\tau}^{\theta}u^{n+\theta}=\frac{(\theta+\frac{1}{2})u^{n+1}-2\theta u^n+(\theta-\frac{1}{2})u^{n-1}}{\tau},
\end{align}
where $\theta\in[\frac{1}{2},1]$, and $D_{\tau}^{\theta}u^{n+\theta}$ is used to approximate the value of $u'(t^{n+\theta})$.\par
In order to prove the unconditional energy stability of subsequent numerical schemes, we initially introduce the concept of G-stability as described in the classic book by Hairer \cite{hairer1996g}.\par
To simplify the presentation, we introduce a real, symmetric and positive definite matrix
\begin{align*}
     G=\begin{pmatrix}
     g_{11} & g_{12} \\
     g_{21} & g_{22} \\
\end{pmatrix}=\begin{pmatrix}
     \frac{\theta(2\theta-1)}{2}     & - \frac{(\theta+1)(2\theta-1)}{2}\\ - \frac{(\theta+1)(2\theta-1)}{2}  &  \frac{\theta(2\theta+3)}{2}
\end{pmatrix},
 \end{align*}
and define the corresponding G-norm $\|\cdot\|_G$ with $L^2$ inner $(\cdot,\cdot)$ as follow
\begin{align}\label{eqn:G_norm}
\|\mathbf{U}\|_{G}^{2}=\sum_{i=1}^{2}\sum_{j=1}^{2}g_{i,j}\left(u^{i},u^{j}\right),
\end{align}
where vector $\mathbf{U}=[u^1,u^2]^T$, with $u^1,u^2\in L^2(\Omega)$. Clearly, if $\theta=\frac{1}{2}$, the G-norm will degenerate into the $L^2$-norm. When $\theta\in(\frac{1}{2},1]$, we can easily verify that the G-norm \eqref{eqn:G_norm} is equivalent to the $L^2$-norm.
\par
Similarly, we can define another G-norm $|\cdot|_G$ as
\begin{align}
    |\mathbf{V}|_{G}^{2}=\sum_{i=1}^{2}\sum_{j=1}^{2}g_{i,j}v^{i}v^{j}
\end{align}
with vector $\mathbf{V}=[v^1,v^2]^T$, and $v^1,v^2\in\mathbb{R}$.
\par
After defining the G-norm $\|\cdot\|_G$ with $L^2$ inner $(\cdot,\cdot)$, we introduce the following lemma
\begin{lem}\label{lem:lem3.1}
    For any given sequence $\{u^n\}$, it holds
    \begin{align}\label{eqn:Gnormid}
    \tau\left(D_{\tau}^{\theta}u^{n+\theta},\theta u^{n+1}+(1-\theta)u^{n}\right)=\frac{\left\|
    \begin{bmatrix}
            u^{n+1}\\
            u^n
    \end{bmatrix}
     \right\|_G^2-\left\|
     \begin{bmatrix}
         u^{n}\\
        u^{n-1}
    \end{bmatrix}
     \right\|_G^2}{2}
     +\frac{\left\|\alpha_2u^{n+1}+\alpha_1u^n+\alpha_0u^{n-1}\right\|^2}{4}.
    \end{align}
    where $\alpha_0=\alpha_2=(\theta(2\theta-1))^{\frac{1}{2}}$ and $\alpha_1=-(\theta(2\theta-1))^{\frac{1}{2}}$.
\end{lem}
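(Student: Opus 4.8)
The plan is to verify the identity \eqref{eqn:Gnormid} by a direct algebraic computation, treating it as a purely polynomial identity in the "coordinates" $u^{n-1}, u^n, u^{n+1}$ with respect to the bilinear form $(\cdot,\cdot)$. Since both sides are quadratic forms in these three elements and the $L^2$ inner product is symmetric and bilinear, it suffices to expand each side into a linear combination of the six products $(u^{n+1},u^{n+1})$, $(u^n,u^n)$, $(u^{n-1},u^{n-1})$, $(u^{n+1},u^n)$, $(u^{n+1},u^{n-1})$, $(u^n,u^{n-1})$ and match coefficients. This reduces the lemma to six scalar identities in the parameter $\theta$, which is exactly the statement of the analogous real-valued identity $|\cdot|_G$; so in fact I would prove the scalar version and note that the $L^2$ version follows verbatim by bilinearity.

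First I would write out the left-hand side. Using \eqref{eqn:WBDF2}, $\tau D_\tau^\theta u^{n+\theta} = (\theta+\tfrac12)u^{n+1} - 2\theta u^n + (\theta-\tfrac12)u^{n-1}$, and the test function is $\theta u^{n+1} + (1-\theta)u^n$. Multiplying these two linear combinations gives a quadratic form whose six coefficients I would record. Next I would expand the telescoping G-norm difference: by definition \eqref{eqn:G_norm},
\begin{align*}
\left\|\begin{bmatrix}u^{n+1}\\u^n\end{bmatrix}\right\|_G^2 &= g_{11}(u^{n+1},u^{n+1}) + 2g_{12}(u^{n+1},u^n) + g_{22}(u^n,u^n),\\
\left\|\begin{bmatrix}u^{n}\\u^{n-1}\end{bmatrix}\right\|_G^2 &= g_{11}(u^{n},u^{n}) + 2g_{12}(u^{n},u^{n-1}) + g_{22}(u^{n-1},u^{n-1}),
\end{align*}
so half their difference contributes $\tfrac12 g_{11}$ to the $(u^{n+1},u^{n+1})$ coefficient, $g_{12}$ to $(u^{n+1},u^n)$, $\tfrac12(g_{22}-g_{11})$ to $(u^n,u^n)$, $-g_{12}$ to $(u^n,u^{n-1})$, and $-\tfrac12 g_{22}$ to $(u^{n-1},u^{n-1})$, with nothing on $(u^{n+1},u^{n-1})$. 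Finally I would expand the quadratic term: with $\alpha_2=\alpha_0=(\theta(2\theta-1))^{1/2}$ and $\alpha_1=-(\theta(2\theta-1))^{1/2}$, writing $c:=\theta(2\theta-1)$, the square $\tfrac14\|\alpha_2 u^{n+1}+\alpha_1 u^n+\alpha_0 u^{n-1}\|^2$ equals $\tfrac{c}{4}$ times $(u^{n+1},u^{n+1}) + (u^n,u^n) + (u^{n-1},u^{n-1}) - 2(u^{n+1},u^n) - 2(u^n,u^{n-1}) + 2(u^{n+1},u^{n-1})$.

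Adding the telescoping part and the quadratic part, I would check the six coefficient identities against those from the left-hand side; substituting the explicit entries $g_{11}=\tfrac{\theta(2\theta-1)}{2}=\tfrac c2$, $g_{22}=\tfrac{\theta(2\theta+3)}{2}$, $g_{12}=-\tfrac{(\theta+1)(2\theta-1)}{2}$, each reduces to an elementary polynomial identity in $\theta$. The verification is entirely routine; the only genuine point of care — and the step I expect to be most error-prone rather than conceptually hard — is the bookkeeping of the mixed coefficient $(u^{n+1},u^{n-1})$, where the contribution $+\tfrac c2$ from the squared term must exactly cancel the product $(\theta+\tfrac12)(1-\theta)\cdot 0$-type terms on the left; in fact on the left the $(u^{n+1},u^{n-1})$ coefficient is $0$ (the test function has no $u^{n-1}$ component), so matching forces the relation that $\tfrac c2$ is cancelled by the telescoping difference contributing nothing there, which is precisely why $G$ must be chosen with this specific off-diagonal and diagonal structure. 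Once all six match, bilinearity of $(\cdot,\cdot)$ upgrades the scalar identity to the stated $L^2$ identity, completing the proof. I would also remark that the positive-definiteness and $L^2$-equivalence of $\|\cdot\|_G$ for $\theta\in(\tfrac12,1]$, already asserted in the text, guarantees the right-hand side is meaningful as a genuine norm difference plus a nonnegative remainder.
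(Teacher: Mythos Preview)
Your approach---direct expansion of both sides as quadratic forms in $u^{n-1},u^n,u^{n+1}$ and matching the six coefficients---is exactly what the paper does (it says only ``by the definitions of G-norm and simple algebraic operations'').

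One slip to correct: you assert that on the left-hand side the $(u^{n+1},u^{n-1})$ coefficient is $0$ ``since the test function has no $u^{n-1}$ component.'' That is false: the cross term comes from the $(\theta-\tfrac12)u^{n-1}$ factor in $\tau D_\tau^\theta u^{n+\theta}$ paired with the $\theta u^{n+1}$ factor in the test function, giving coefficient $\theta(\theta-\tfrac12)=\tfrac{c}{2}$. This matches the $+\tfrac{c}{2}$ coming from the squared remainder on the right (the telescoping $G$-norm difference contributes nothing to $(u^{n+1},u^{n-1})$), so the identity holds there with equality rather than via any cancellation. The rest of your bookkeeping plan is sound.
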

\begin{proof}
    By the definitions of G-norm and simple algebraic operations, we can
    easily complete the proof.
\end{proof}

Similar to Lemma \ref{lem:lem3.1}, for the G-norm $|\cdot|$, the following identity holds
\begin{align}\label{eqn:Gnorm_id2}
    \begin{aligned}
        &\left(\frac{2\theta+1}{2}v^{n+1}-2\theta v^n+\frac{2\theta-1}{2}v^{n-1}\right)\left(\theta v^{n+1}+(1-\theta)v^n\right)
        \\
        =&\frac{1}{2}\left|
        \begin{bmatrix}
            v^{n+1}\\
            v^n
        \end{bmatrix}
        \right|_{G}^2
        -\frac{1}{2}\left|
        \begin{bmatrix}
            v^n\\
            v^{n-1}
        \end{bmatrix}
        \right|_{ G}^2
        +\frac{\left\|\alpha_2v^{n+1}+\alpha_1 v^n+\alpha_0 v^{n-1}\right\|^2}{4}.
    \end{aligned}
\end{align}

\subsection{Linear regularization model}
For the linear regularization system, we first introduce an auxiliary variable $r(t)$, defined by
\begin{align*}
    r(t)=\sqrt{E_1(\phi)}:=\sqrt{\int_{\Omega}\gamma(\mathbf{n})\left(\frac{1}{2}|\nabla\phi|^2+\frac{1}{\epsilon^2}F(\phi)\right)d\Omega+C},
\end{align*}
where $C$ is a positive constant that guarantees the radicand is positive. Then the total free energy \eqref{eqn:total_energy} can be rewritten as
\begin{align}
    \mathcal{E}(r,\phi)=r^2-C+\frac{\beta}{2}\int_\Omega(\Delta\phi)^2d\Omega,
\end{align}
and the transformed $H^{-1}$ gradient flow is given by
\begin{subequations}\label{eqn:modified_linear_gradient_flow}
    \begin{align}
\phi_t&=\nabla\cdot(M(\phi)\nabla\mu),\\
\mu&=H(\phi)r+\beta\Delta^2\phi,\\
r_t&=\frac{1}{2}\int_{\Omega}H(\phi)\phi_td\Omega,
    \end{align}
\end{subequations}
where
\begin{align}
    H(\phi)=\frac{H_1(\phi)}{\sqrt{E_1(\phi)}}:=\frac{\frac{1}{\epsilon^2}\gamma(\mathbf{n})f(\phi)-\nabla\cdot\mathbf{m}}{\sqrt{\int_{\Omega}\gamma(\mathbf{n})\left(\frac{1}{2}|\nabla\phi|^2+\frac{1}{\epsilon^2}F(\phi)\right)d\Omega+C_0}}.
\end{align}
By taking the inner products of the above equations with $\mu$, $\phi_t$ and $2r$, we can derive
\begin{align}
   \frac{d}{dt}\mathcal{E}(r,\phi)=-\left\|\sqrt{M(\phi)}\nabla\mu\right\|^2\leq0.
\end{align}
Namely, the transformed system \eqref{eqn:modified_linear_gradient_flow} follows the new energy dissipation law.\par
We now construct a numerical scheme based on the uniform-time-step WSBDF2 method for the above system.
  \begin{itemize}
      \item {\bf{$\mathcal{U}_L$-method}}.
      Given $\phi^n$, $r^n$ and $\phi^{n-1}$, $r^{n-1}$, we update $\phi^{n+1}$, $r^{n+1}$ by solving
      \end{itemize}
\begin{subequations}\label{eqn:Uniform-WBDF2-Linear}
    \begin{align}
        &D_{\tau}^{\theta}\phi^{n+\theta}=\nabla\cdot(M^{*,n+\theta}\nabla\mu^{n+\theta}),\label{eqn:IMEX-WSBDF_1}\\
        &\begin{aligned}\mu^{n+\theta}=H^{*,n+\theta}r^{n+\theta}+\beta\Delta^2\phi^{n+\theta}+\frac{S_1}{\epsilon^2}(\phi^{n+1}-2\phi^n+\phi^{n-1})-S_2\Delta(\phi^{n+1}-2\phi^n+\phi^{n-1}),
        \end{aligned}\label{eqn:IMEX-WSBDF_2}\\
        &D_{\tau}^{\theta}r^{n+\theta}=\frac{1}{2}\int_{\Omega}H^{*,n+\theta}D_{\tau}^{\theta}\phi^{n+\theta}d\Omega,\label{eqn:IMEX-WSBDF_3}
    \end{align}
\end{subequations}
where
\begin{align}
    &\phi^{n+\theta}=\theta \phi^{n+1}+(1-\theta)\phi^{n},\quad\phi^{*,n+\theta}=(1+\theta)\phi^{n}-\theta \phi^{n-1},\nonumber\\
    &H^{*,n+\theta}=H(\phi^{*,n+\theta}),\quad M^{*,n+\theta}= M(\phi^{*,n+\theta}),\nonumber
\end{align}
and $S_i\;(i=1,2)$ are positive stabilizing parameters.\par

\begin{rem}
    In the above discretization scheme, we add two stabilization terms, $\frac{S_1}{\epsilon^2}(\phi^{n+1}-2\phi^n+\phi^{n-1})$ and $S_2\Delta(\phi^{n+1}-2\phi^n+\phi^{n-1})$, as they can eliminate the strong oscillations caused by $\gamma(\bf{n})$. These terms play a crucial role in maintaining the stability of the solution during computations. Additionally, $\frac{S_1}{\epsilon^2}(\phi^{n+1}-2\phi^n+\phi^{n-1})=\frac{S_1}{\epsilon^2}O(\tau^2)$ and $S_2\Delta(\phi^{n+1}-2\phi^n+\phi^{n-1})= S_2O(\tau^2)$, which mean they have second-order accuracy and thus do not affect the precision of the solution in practical calculations.
\end{rem}
We can observe that solving a nonlocal and coupled system for $\phi^{n+1}$ and $r^{n+1}$ within the framework of $\mathcal{U}_L$-method at each time step is a complex task. However, in practical applications, we can simplify the solving process by the following procedure.\par
   We first rewrite \eqref{eqn:IMEX-WSBDF_3} as follow
   \begin{align*}
       r^{n+1}=\frac{1}{2}\int_{\Omega}H^{*,n+\theta}\phi^{n+1}d\Omega+\tilde{g}^n,
   \end{align*}
   where
   \begin{align*}
      \tilde{g}^n=\frac{4\theta r^{n}-(2\theta-1)r^{n-1}}{2\theta+1} -\frac{1}{2}\int_{\Omega}H^{*,n+\theta}\frac{4\theta \phi^{n}-(2\theta-1)\phi^{n-1}}{2\theta+1}d\Omega.
   \end{align*}
    Then the above equations can be written as
    \begin{align}\label{eqn:equation_311}
    \begin{aligned}
        A(\phi^{n+1})-\frac{\theta}{2}\nabla\cdot(M^{*,n+\theta}\nabla(H^{*,n+\theta}))\int_{\Omega}H^{*,n+\theta}\phi^{n+1}d\Omega=g^n,
        \end{aligned}
    \end{align}
    where
    \begin{align*}
     \left\{
    \begin{aligned}
        &A=\frac{2\theta+1}{2\tau}I-\nabla\cdot\left(M^{*,n+\theta}\nabla\left(\beta\theta\Delta^2+S_1/\epsilon^2I-S_2\Delta\right)\right),\\
        &g^n=\frac{4\theta \phi^{n}-(2\theta-1)\phi^{n-1}}{2\tau}+ (\theta\tilde{g}^n+(1-\theta) r^n)\nabla\cdot(M^{*,n+\theta}\nabla(H^{*,n+\theta}))\\
        &\quad\quad -\nabla\cdot\left(M^{*,n+\theta}\nabla\left(\beta(1-\theta)\Delta^2\phi^n+S_1/\epsilon^2\phi^{*,n+1}-S_2\Delta\phi^{*,n+1}\right)\right).
        \end{aligned}
        \right.
    \end{align*}
     By applying the linear operator $A^{-1}$ to \eqref{eqn:equation_311}, then taking the $L^2$ inner product with $H^{*,n+\theta}$, we obtain
     \begin{align}\label{eqn:equation_312}
     \int_{\Omega}H^{*,n+\theta}\phi^{n+1}d\Omega=\frac{\int_{\Omega}H^{*,n+\theta}A^{-1}(g^n)d\Omega}{1-\frac{\theta}{2}\int_{\Omega}H^{*,n+\theta}A^{-1}(\nabla\cdot(M^{*,n+\theta}\nabla(H^{*,n+\theta})))d\Omega}.
     \end{align}
     It is easy to verify the term $\int_{\Omega}H^{*,n+\theta}A^{-1}(\nabla\cdot(M^{*,n+\theta}\nabla(H^{*,n+\theta})))d\Omega\geq0$ since $A^{-1}(\nabla\cdot(M^{*,n+\theta}\nabla\left(\cdot\right)))$ is a positive definite operator. Finally, we can solve $\phi^{n+1}$ from \eqref{eqn:equation_311}.\par
     To sum up, the $\mathcal{U}_L$-method \eqref{eqn:Uniform-WBDF2-Linear} can be easily implemented in the following steps:
     \begin{itemize}
     \item  Compute $A^{-1}(g^n)$ and $A^{-1}(\nabla\cdot(M^{*,n+\theta}\nabla(H^{*,n+\theta})))$ by solving two sixth-order equations;
     \end{itemize}
     \begin{itemize}
     \item  Compute $\int_{\Omega}H^{*,n+\theta}\phi^{n+1}d\Omega$ from \eqref{eqn:equation_312};
     \end{itemize}
     \begin{itemize}
     \item  Compute $\phi^{n+1}$ from the variation of \eqref{eqn:equation_311} as follow
     \end{itemize}
     \begin{align*}
         \phi^{n+1}=\frac{\theta}{2}A^{-1}(\nabla\cdot(M^{*,n+\theta}\nabla(H^{*,n+\theta})))\int_{\Omega}H^{*,n+\theta}\phi^{n+1}d\Omega+A^{-1}(g^n).
     \end{align*}
     Hence, the total cost at each time step essentially involves solving two fourth-order equations. Therefore, this method is extremely efficient and easy to implement.\par
     Next, we give the proof that the $\mathcal{U}_L$-method \eqref{eqn:UE_linear} is unconditionally energy stable, and preserves mass conservation.

\begin{theorem} \label{th:U_Energy_linear}
    The $\mathcal{U}_L$-method is unconditionally energy stable in the sense that
    \begin{align}
        \frac{1}{\tau}(\mathcal{E}_{L}^{n+1}-\mathcal{E}_{L}^{n})\leq-\left\|\sqrt{M^{*,n+\theta}}\nabla\mu^{n+\theta}\right\|^2\leq0,
    \end{align}
    where
    \begin{align} \label{eqn:UE_linear}
        \begin{aligned}
          \mathcal{E}_{L}^{n+1}=\left|
     \begin{bmatrix}
        r^{n+1}\\
        r^{n}
    \end{bmatrix}
    \right|_G^2+\frac{\beta}{2}\left\|
    \begin{bmatrix}
    \Delta\phi^{n+1}\\
    \Delta\phi^n
    \end{bmatrix}
     \right\|_G^2+\frac{S_1}{\epsilon^2}\frac{\|\phi^{n+1}-\phi^{n}\|^2}{2}+S_2\frac{\|\nabla\phi^{n+1}-\nabla\phi^{n}\|^2}{2}.
        \end{aligned}
    \end{align}
\end{theorem}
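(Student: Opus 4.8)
The plan is to test the discrete system \eqref{eqn:Uniform-WBDF2-Linear} against carefully chosen test functions, mimicking the continuous energy-dissipation argument, and then use the G-stability identities of Lemma \ref{lem:lem3.1} and \eqref{eqn:Gnorm_id2} to convert the resulting telescoping quantities into the discrete energy $\mathcal{E}_L^{n+1}$. Concretely, I would first take the $L^2$ inner product of \eqref{eqn:IMEX-WSBDF_1} with $-\tau\mu^{n+\theta}$; integration by parts (periodic boundary conditions) turns the right-hand side into $-\tau\|\sqrt{M^{*,n+\theta}}\nabla\mu^{n+\theta}\|^2$, which is exactly the dissipation term we want on the right of the claimed inequality. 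This leaves $-\tau(D_\tau^\theta\phi^{n+\theta},\mu^{n+\theta})$ on the left, into which I substitute the expression for $\mu^{n+\theta}$ from \eqref{eqn:IMEX-WSBDF_2}, producing four groups of terms: the SAV term $H^{*,n+\theta}r^{n+\theta}$, the bi-Laplacian term $\beta\Delta^2\phi^{n+\theta}$, and the two stabilization terms.

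Next I would handle each group. For the bi-Laplacian term, integrating by parts twice gives $\beta(D_\tau^\theta\phi^{n+\theta}$ tested against $\Delta^2\phi^{n+\theta})=\beta(\Delta D_\tau^\theta\phi^{n+\theta},\Delta\phi^{n+\theta})$; since $\Delta$ commutes with the difference operator, this is $\beta\tau(D_\tau^\theta(\Delta\phi)^{n+\theta},\theta\Delta\phi^{n+1}+(1-\theta)\Delta\phi^n)$, to which Lemma \ref{lem:lem3.1} applies with $u^n=\Delta\phi^n$, yielding the telescoping $\tfrac{\beta}{2}(\|[\Delta\phi^{n+1},\Delta\phi^n]^T\|_G^2-\|[\Delta\phi^n,\Delta\phi^{n-1}]^T\|_G^2)$ plus a nonnegative square term (which we simply discard, keeping the inequality direction). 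For the SAV term, the key is that testing \eqref{eqn:IMEX-WSBDF_1} against $H^{*,n+\theta}r^{n+\theta}$ reproduces, via \eqref{eqn:IMEX-WSBDF_3}, the quantity $\tau(D_\tau^\theta r^{n+\theta})\cdot r^{n+\theta}$ up to the factor arrangement: indeed $(D_\tau^\theta\phi^{n+\theta},H^{*,n+\theta}r^{n+\theta})=r^{n+\theta}(D_\tau^\theta\phi^{n+\theta},H^{*,n+\theta})=2r^{n+\theta}D_\tau^\theta r^{n+\theta}$ by \eqref{eqn:IMEX-WSBDF_3} (note $r^{n+\theta}$ is a scalar, and $r^{n+\theta}=\theta r^{n+1}+(1-\theta)r^n$ matches the test-function pattern). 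Then the scalar identity \eqref{eqn:Gnorm_id2} with $v^n=r^n$ converts $2\tau r^{n+\theta}D_\tau^\theta r^{n+\theta}$ into $|[r^{n+1},r^n]^T|_G^2-|[r^n,r^{n-1}]^T|_G^2$ plus a nonnegative term. For the two stabilization terms, testing $\frac{S_1}{\epsilon^2}(\phi^{n+1}-2\phi^n+\phi^{n-1})$ against $-\tau D_\tau^\theta\phi^{n+\theta}=-( (\theta+\tfrac12)\phi^{n+1}-2\theta\phi^n+(\theta-\tfrac12)\phi^{n-1})$ should, after rearranging, telescope into $\frac{S_1}{\epsilon^2}\tfrac12(\|\phi^{n+1}-\phi^n\|^2-\|\phi^n-\phi^{n-1}\|^2)$ up to an extra nonnegative quadratic form in the second differences; the $S_2$ term is identical with $\nabla\phi$ in place of $\phi$ after one integration by parts.

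The main obstacle I anticipate is the bookkeeping for the stabilization terms: unlike the $\beta$ and $r$ terms, which plug directly into the ready-made identities, the terms $S_i(\phi^{n+1}-2\phi^n+\phi^{n-1})$ paired with the WSBDF2 combination do not immediately match Lemma \ref{lem:lem3.1}, so one must verify by an explicit (but elementary) computation that the cross terms assemble into the stated squared differences plus a leftover that is a manifestly nonnegative quadratic form in $\phi^{n+1}-\phi^n$ and $\phi^n-\phi^{n-1}$ — this is where the constraint $\theta\in[\tfrac12,1]$ and positivity of $S_i$ get used. A secondary point to check is that the positive-definiteness of $G$ (stated in the excerpt) guarantees $\mathcal{E}_L^{n+1}\ge 0$, so the inequality is genuinely a stability bound; and that all discarded terms (the $\|\alpha_2\,\cdot\,+\alpha_1\,\cdot\,+\alpha_0\,\cdot\,\|^2/4$ contributions and the leftover stabilization squares) carry the correct sign so that dropping them preserves the direction of the inequality. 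Once every group is telescoped, summing them and dividing by $\tau$ yields exactly $\tfrac1\tau(\mathcal{E}_L^{n+1}-\mathcal{E}_L^n)\le-\|\sqrt{M^{*,n+\theta}}\nabla\mu^{n+\theta}\|^2\le 0$, completing the proof.
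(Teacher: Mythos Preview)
Your proposal is correct and follows essentially the same route as the paper: test \eqref{eqn:IMEX-WSBDF_1} with $\tau\mu^{n+\theta}$, test \eqref{eqn:IMEX-WSBDF_2} with $\tau D_\tau^\theta\phi^{n+\theta}$, multiply \eqref{eqn:IMEX-WSBDF_3} by $r^{n+\theta}$, apply Lemma~\ref{lem:lem3.1} and \eqref{eqn:Gnorm_id2}, and drop nonnegative remainders. The ``elementary computation'' you anticipate for the stabilization terms is exactly the identity the paper invokes,
\[
\bigl((2\theta+1)a-4\theta b+(2\theta-1)c\bigr)(a-2b+c)=(a-b)^2-(b-c)^2+2\theta(a-2b+c)^2,
\]
whose leftover $2\theta(a-2b+c)^2$ is nonnegative (so in fact only $\theta\ge 0$ and $S_i\ge 0$ are used at that step; the restriction $\theta\in[\tfrac12,1]$ enters through the positive semidefiniteness of $G$).
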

\begin{proof}
    Taking the $L^2$ inner product of \eqref{eqn:IMEX-WSBDF_1} with $\tau\mu^{n+\theta}$
    \begin{align}\label{eqn:linear_inner_product_1}
        \left((\theta+\frac{1}{2})\phi^{n+1}-2\theta \phi^n+(\theta-\frac{1}{2})\phi^{n-1},\mu^{n+\theta}\right)=-\tau\left\|\sqrt{M^{*,n+\theta}}\nabla\mu^{n+\theta}\right\|^2.
    \end{align}
    By taking the $L^2$ inner product of \eqref{eqn:IMEX-WSBDF_2} with $(\theta+\frac{1}{2})\phi^{n+1}-2\theta \phi^n+(\theta-\frac{1}{2})\phi^{n-1}$ and applying \eqref{eqn:Gnormid} along with the following identity:
        \begin{align}
            ((2\theta+1)a-4\theta b+(2\theta-1)c)(a-2b+c)=(a-b)^2-(b-c)^2+2\theta(a-2b+c)^2,
        \end{align}
        we obtain
        \begin{align}\label{eqn:linear_inner_product_2}
        \begin{aligned}
        &\left(\mu^{n+\theta},(\theta+\frac{1}{2})\phi^{n+1}-2\theta \phi^n+(\theta-\frac{1}{2})\phi^{n-1}\right)\\
        =&r^{n+\theta}\left(H^{*,n+\theta},(\theta+\frac{1}{2})\phi^{n+1}-2\theta \phi^n+(\theta-\frac{1}{2})\phi^{n-1}\right)\\
        &+\beta\left(\frac{1}{2}\left\|
    \begin{bmatrix}
    \Delta\phi^{n+1}\\
    \Delta\phi^n
    \end{bmatrix}
     \right\|_G^2-\frac{1}{2}\left\|
     \begin{bmatrix}
        \Delta\phi^{n}\\
        \Delta\phi^{n-1}
    \end{bmatrix}
     \right\|_G^2
     +\frac{\left\|\alpha_2\Delta\phi^{n+1}+\alpha_1\Delta\phi^n+\alpha_0\Delta\phi^{n-1}\right\|^2}{4}\right)\\
     &+\frac{S_1}{\epsilon^2}\left(\frac{\|\phi^{n+1}-\phi^{n}\|^2}{2}-\frac{\|\phi^{n}-\phi^{n-1}\|^2}{2}+\theta\|\phi^{n+1}-2\phi^{n}+\phi^{n-1}\|^2\right)\\
     &+S_2\left(\frac{\|\nabla\phi^{n+1}-\nabla\phi^{n}\|^2}{2}-\frac{\|\nabla\phi^{n}-\nabla\phi^{n-1}\|^2}{2}+\theta\|\nabla\phi^{n+1}-2\nabla\phi^{n}+\nabla\phi^{n-1}\|^2\right).
           \end{aligned}
        \end{align}
By multiplying \eqref{eqn:IMEX-WSBDF_3} with $r^{n+\theta}$ and using \eqref{eqn:Gnorm_id2}, we have
\begin{align}\label{eqn:linear_inner_product_3}
\begin{aligned}
    &\frac{1}{2}\left|
    \begin{bmatrix}
    r^{n+1}\\
    r^n
    \end{bmatrix}
     \right|_G^2-\frac{1}{2}\left|
     \begin{bmatrix}
        r^{n}\\
        r^{n-1}
    \end{bmatrix}
     \right|_G^2
     +\frac{\left|\alpha_2r^{n+1}+\alpha_1r^n+\alpha_0r^{n-1}\right|^2}{4}\\
    =&\frac{1}{2}r^{n+\theta}\int_{\Omega}H^{*,n+\theta}\left((\theta+\frac{1}{2})\phi^{n+1}-2\theta \phi^n+(\theta-\frac{1}{2})\phi^{n-1}\right)d\Omega.
    \end{aligned}
\end{align}
Combining the equations \eqref{eqn:linear_inner_product_1},\eqref{eqn:linear_inner_product_2} and \eqref{eqn:linear_inner_product_3}, we derive
\begin{align}
\begin{aligned}
    &\left|
    \begin{bmatrix}
    r^{n+1}\\
    r^n
    \end{bmatrix}
     \right|_G^2-\left|
     \begin{bmatrix}
        r^{n}\\
        r^{n-1}
    \end{bmatrix}
     \right|_G^2+\frac{\beta}{2}\left(\left\|
    \begin{bmatrix}
    \Delta\phi^{n+1}\\
    \Delta\phi^n
    \end{bmatrix}
     \right\|_G^2-\left\|
     \begin{bmatrix}
        \Delta\phi^{n}\\
        \Delta\phi^{n-1}
    \end{bmatrix}
     \right\|_G^2\right)\\
     &+\frac{S_1}{\epsilon^2}\left(\frac{\|\phi^{n+1}-\phi^{n}\|^2}{2}-\frac{\|\phi^{n}-\phi^{n-1}\|^2}{2}\right)+S_2\left(\frac{\|\nabla\phi^{n+1}-\nabla\phi^{n}\|^2}{2}-\frac{\|\nabla\phi^{n}-\nabla\phi^{n-1}\|^2}{2}\right)\\
     &+\frac{\left|\alpha_2r^{n+1}+\alpha_1r^n+\alpha_0r^{n-1}\right|^2}{2}+\frac{\beta}{2}\left(\frac{\left\|\alpha_2\Delta\phi^{n+1}+\alpha_1\Delta\phi^n+\alpha_0\Delta\phi^{n-1}\right\|^2}{2}\right)\\
     &+\frac{\theta S_1}{\epsilon^2}\|\phi^{n+1}-2\phi^{n}+\phi^{n-1}\|^2+\theta S_2|\nabla\phi^{n+1}-2\nabla\phi^{n}+\nabla\phi^{n-1}\|^2\\
     &=-\tau\|\sqrt{M^{*,n+\theta}}\nabla\mu^{n+\theta}\|^2.
     \end{aligned}
\end{align}
Finally, we obtain the desired result by omitting the positive terms.
\end{proof}

 \begin{theorem}\label{th:U_Mass_linear}
         The solution of the $\mathcal{U}_L$-method \eqref{eqn:Uniform-WBDF2-Linear} satisfies the mass conservation.
     \end{theorem}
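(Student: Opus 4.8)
The plan is to test the discrete mass equation \eqref{eqn:IMEX-WSBDF_1} against the constant function $1$ and exploit the conservative (divergence) form of its right-hand side. Concretely, I would take the $L^2$ inner product of \eqref{eqn:IMEX-WSBDF_1} with $1$; since $\big(\nabla\cdot(M^{*,n+\theta}\nabla\mu^{n+\theta}),1\big)=0$ by integration by parts together with the periodic (or no-flux) boundary conditions, this gives
\begin{align*}
\big(D_{\tau}^{\theta}\phi^{n+\theta},1\big)=0,
\end{align*}
that is,
\begin{align*}
\Big(\theta+\tfrac{1}{2}\Big)\int_{\Omega}\phi^{n+1}\,\rd\Omega-2\theta\int_{\Omega}\phi^{n}\,\rd\Omega+\Big(\theta-\tfrac{1}{2}\Big)\int_{\Omega}\phi^{n-1}\,\rd\Omega=0
\end{align*}
for every $n\geq 1$. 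Note that neither the stabilization terms nor the coupling with $r^{n+\theta}$ enters here, because only \eqref{eqn:IMEX-WSBDF_1} is used and its right-hand side is already in divergence form; the variable coefficient $M^{*,n+\theta}$ is harmless for the same reason.

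The second step is an induction on $n$ to upgrade this three-term recurrence into $\int_{\Omega}\phi^{n}\,\rd\Omega=\int_{\Omega}\phi^{0}\,\rd\Omega$ for all $n$. Assuming the startup value $\phi^1$ is produced by a mass-conserving one-step procedure so that $\int_{\Omega}\phi^{1}\,\rd\Omega=\int_{\Omega}\phi^{0}\,\rd\Omega$ (the base case), suppose $\int_{\Omega}\phi^{n}\,\rd\Omega=\int_{\Omega}\phi^{n-1}\,\rd\Omega=\int_{\Omega}\phi^{0}\,\rd\Omega$. Substituting into the recurrence gives $\big(\theta+\tfrac{1}{2}\big)\int_{\Omega}\phi^{n+1}\,\rd\Omega=\big(2\theta-(\theta-\tfrac{1}{2})\big)\int_{\Omega}\phi^{0}\,\rd\Omega=\big(\theta+\tfrac{1}{2}\big)\int_{\Omega}\phi^{0}\,\rd\Omega$, and since $\theta+\tfrac{1}{2}>0$ we obtain $\int_{\Omega}\phi^{n+1}\,\rd\Omega=\int_{\Omega}\phi^{0}\,\rd\Omega$. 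This closes the induction and yields the discrete mass conservation $\int_{\Omega}\phi^{n}\,\rd\Omega=\int_{\Omega}\phi^{0}\,\rd\Omega$ for all $0\leq n\leq N$.

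The argument is essentially a one-line computation, so there is no genuine analytical obstacle; the only point that requires care is the treatment of the first time level, i.e., ensuring that the initialization used to obtain $\phi^1$ from $\phi^0$ itself preserves the spatial average (otherwise one only gets the weaker statement that the mean is conserved for $n\geq 1$ relative to $\phi^1$). I would therefore state explicitly which startup scheme is used and record the trivial fact that testing that startup equation against $1$ gives $\int_{\Omega}\phi^{1}\,\rd\Omega=\int_{\Omega}\phi^{0}\,\rd\Omega$, after which the induction above applies verbatim.
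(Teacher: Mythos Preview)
Your proposal is correct and follows essentially the same approach as the paper: test \eqref{eqn:IMEX-WSBDF_1} against $1$, use the divergence form of the right-hand side to obtain the three-term recurrence on $\int_{\Omega}\phi^{n}\,\rd\Omega$, and close by induction with the base case $\int_{\Omega}\phi^{1}\,\rd\Omega=\int_{\Omega}\phi^{0}\,\rd\Omega$. The paper handles your caveat about the startup step in a separate remark, noting that BDF1 is used to compute $\phi^1$ and preserves the mean.
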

     \begin{proof}
         By taking the inner product of equation \eqref{eqn:IMEX-WSBDF_1} with 1, we can immediately obtain
         \begin{align*}
         \left(\frac{(\theta+\frac{1}{2})u^{n+1}-2\theta u^n+(\theta-\frac{1}{2})u^{n-1}}{\tau},1\right)=\nabla\cdot(M^{*,n+\theta}\nabla\mu^{n+\theta})=0,
           \end{align*}
          namely
          \begin{align*}
              \int_{\Omega}\phi^{n+1}d\Omega=\frac{4\theta}{2\theta+1}\int_{\Omega}\phi^{n}d\Omega-\frac{2\theta-1}{2\theta+1}\int_{\Omega}\phi^{n-1}d\Omega.
          \end{align*}
         By applying mathematical induction with the initial condition $\int_{\Omega}\phi^{1}d\Omega=\int_{\Omega}\phi^{0}d\Omega$, we can conclude
         \begin{align}
           \int_{\Omega}\phi^{n+1}d\Omega=\int_{\Omega}\phi^{n}d\Omega=\cdots=\int_{\Omega}\phi^{1}d\Omega=\int_{\Omega}\phi^{0}d\Omega.
         \end{align}
         The proof is complete.
     \end{proof}

\begin{rem}
    Since the BDF2 method is a two-step method, we use the BDF1 method, also known as the backward Euler method, to compute $\phi^1$. For this method, we can easily obtain that $\int_{\Omega}\phi^{1}d\Omega=\int_{\Omega}\phi^{0}d\Omega$, which means that mass conservation is maintained in the first step of the computation.
\end{rem}
\subsection{Willmore regularization model}
we consider the Willmore regularization model in this subsection. Similar to the case of linear regularization model, we define an auxiliary variable as follow
\begin{align*}
    r(t)=\sqrt{E_2(\phi)}:=\sqrt{\int_{\Omega}\left(\gamma(\mathbf{n})\left(\frac{1}{2}|\nabla\phi|^2+\frac{1}{\epsilon^2}F(\phi)\right)+\frac{\beta}{2}\left(\Delta\phi-\frac{1}{\epsilon^2}f(\phi)\right)^2\right)d\Omega+C},
\end{align*}
where $C$ is a positive constant that ensures the radicand is positive. Then the total free energy can \eqref{eqn:total_energy} be rewritten as
\begin{align}
    \mathcal{E}(r,\phi)=r^2-C,
\end{align}
and the transformed $H^{-1}$ gradient flow is given by
\begin{subequations}\label{eqn:modified_willmore_gradient_flow}
    \begin{align}
\phi_t&=\nabla\cdot(M(\phi)\nabla\mu),\\
\mu&=Z(\phi)r,\\
r_t&=\frac{1}{2}\int_{\Omega}Z(\phi)\phi_td\Omega,
    \end{align}
\end{subequations}
where
\begin{align}
    Z(\phi)=\frac{Z_1(\phi)}{\sqrt{E_2(\phi)}}:=\frac{\frac{1}{\epsilon^2}\gamma(\mathbf{n})f(\phi)-\nabla\cdot\mathbf{m}+\beta\left(\Delta\phi-\frac{1}{\epsilon^2}f(\phi)\right)\left(\Delta\phi-\frac{1}{\epsilon^2}f'(\phi)\right)}{\sqrt{\int_{\Omega}\left(\gamma(\mathbf{n})\left(\frac{1}{2}|\nabla\phi|^2+\frac{1}{\epsilon^2}F(\phi)\right)+\frac{\beta}{2}\left(\Delta\phi-\frac{1}{\epsilon^2}f(\phi)\right)^2\right)d\Omega+C}}.
\end{align}
Similar as the linear regularization model, by taking the inner products of the above equations with $\mu$, $\phi_t$ and $2r$, we can derive
\begin{align}
   \frac{d}{dt}\mathcal{E}(r,\phi)=-\left\|\sqrt{M(\phi)}\nabla\mu\right\|^2\leq0.
\end{align}
Namely, the transformed linear regularization system \eqref{eqn:modified_willmore_gradient_flow} follows the modified  energy dissipation law.\par
We now construct a numerical scheme based on the uniform-time-step WSBDF2 method for the system \eqref{eqn:modified_willmore_gradient_flow}.
\begin{itemize}
     \item {\bf{$\mathcal{U}_W$-method}.}
    Given $\phi^n$, $r^n$ and $\phi^{n-1}$, $r^{n-1}$, we update $\phi^{n+1}$, $r^{n+1}$ by solving
\end{itemize}
\begin{subequations}\label{eqn:Uniform-WBDF2-willmore}
    \begin{align}
        &D_{\tau}^{\theta}\phi^{n+\theta}=\nabla\cdot(M^{*,n+\theta}\nabla\mu^{n+\theta}),\label{eqn:IMEX-WSBDF_willmore_1}\\
        &\begin{aligned}
            \mu^{n+\theta}=&Z^{*,n+\theta}r^{n+\theta}+\frac{S_1}{\epsilon^2}(\phi^{n+1}-2\phi^n+\phi^{n-1})\\&-S_2\Delta(\phi^{n+1}-2\phi^n+\phi^{n-1})+S_3\Delta^2(\phi^{n+1}-2\phi^n+\phi^{n-1}),
        \end{aligned}\label{eqn:IMEX-WSBDF_willmore_2}\\
        &D_{\tau}^{\theta}r^{n+\theta}=\frac{1}{2}\int_{\Omega}Z^{*,n+\theta}D_{\tau}^{\theta}\phi^{n+\theta}d\Omega,\label{eqn:IMEX-WSBDF_willmore_3}
    \end{align}
\end{subequations}
where $Z^{*,n+\theta}=Z(\phi^{*,n+\theta})$ and $S_i\;(i=1,2,3)$ are positive stabilizing parameters.
\par
The following two theorems demonstrate that the structure-preserving properties still hold for the Willmore regularization system. Additionally, since the proof processes are similar to Theorem \ref{th:U_Energy_linear} and Theorem \ref{th:U_Mass_linear}, the proofs are omitted here.
\begin{theorem}
     The $\mathcal{U}_W$-method is unconditionally energy stable in the sense that
    \begin{align}
        \frac{1}{\tau}(\mathcal{E}_{W}^{n+1}-\mathcal{E}_{W}^{n})\leq\textit{\textit{}}-\left\|\sqrt{M^{*,n+\theta}}\nabla\mu^{n+\theta}\right\|^2\leq0,
    \end{align}
    where
    \begin{align}
        \begin{aligned}
          \mathcal{E}_{W}^{n+1}=&\left|
     \begin{bmatrix}
        r^{n+1}\\
        r^{n}
    \end{bmatrix}
    \right|_G^2+\frac{S_1}{\epsilon^2}\frac{\|\phi^{n+1}-\phi^{n}\|^2}{2}+S_2\frac{\|\nabla\phi^{n+1}-\nabla\phi^{n}\|^2}{2}+S_3\frac{\|\Delta\phi^{n+1}-\Delta\phi^{n}\|^2}{2}.
        \end{aligned}
    \end{align}
\end{theorem}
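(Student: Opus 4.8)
The plan is to replay the proof of Theorem~\ref{th:U_Energy_linear} almost verbatim. The only structural differences in the $\mathcal{U}_W$-scheme are that the rigid bi-Laplacian term $\beta\Delta^2\phi^{n+\theta}$ has disappeared from \eqref{eqn:IMEX-WSBDF_willmore_2} (it is now hidden inside the nonlinearity $Z^{*,n+\theta}r^{n+\theta}$) and that a third stabilization term $S_3\Delta^2(\phi^{n+1}-2\phi^n+\phi^{n-1})$ is present. This is exactly why the claimed $\mathcal{E}_W^{n+1}$ carries no $\beta$-weighted $G$-norm of $\Delta\phi$, but instead a third increment term $S_3\|\Delta\phi^{n+1}-\Delta\phi^n\|^2/2$, alongside the $G$-norm of $[r^{n+1},r^n]^T$ and the $S_1,S_2$ increments in $\phi$ and $\nabla\phi$.

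First I would take the $L^2$ inner product of \eqref{eqn:IMEX-WSBDF_willmore_1} with $\tau\mu^{n+\theta}$ and integrate by parts, which (using $M^{*,n+\theta}\ge M_0>0$) gives
\[
\left((\theta+\tfrac12)\phi^{n+1}-2\theta\phi^n+(\theta-\tfrac12)\phi^{n-1},\ \mu^{n+\theta}\right)=-\tau\left\|\sqrt{M^{*,n+\theta}}\nabla\mu^{n+\theta}\right\|^2 .
\]
Next I would take the $L^2$ inner product of \eqref{eqn:IMEX-WSBDF_willmore_2} with the same test function $(\theta+\tfrac12)\phi^{n+1}-2\theta\phi^n+(\theta-\tfrac12)\phi^{n-1}=\tau D_\tau^\theta\phi^{n+\theta}$. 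The nonlinear term pulls out the scalar $r^{n+\theta}$, leaving the coupling quantity $r^{n+\theta}\bigl(Z^{*,n+\theta},\,\tau D_\tau^\theta\phi^{n+\theta}\bigr)$; each of the three stabilization terms, after integrating by parts zero, one and two times (periodic or no-flux boundary conditions kill all boundary terms), becomes the inner product of $w^{n+1}-2w^n+w^{n-1}$ with $(\theta+\tfrac12)w^{n+1}-2\theta w^n+(\theta-\tfrac12)w^{n-1}$ for $w=\phi,\nabla\phi,\Delta\phi$, and the algebraic identity used in the proof of Theorem~\ref{th:U_Energy_linear},
\[
\bigl((2\theta+1)a-4\theta b+(2\theta-1)c\bigr)(a-2b+c)=(a-b)^2-(b-c)^2+2\theta(a-2b+c)^2 ,
\]
turns it into a telescoping difference of increment norms plus a nonnegative remainder $\theta S_i\|w^{n+1}-2w^n+w^{n-1}\|^2$.

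I would then multiply \eqref{eqn:IMEX-WSBDF_willmore_3} by $\tau r^{n+\theta}$ and apply the scalar identity \eqref{eqn:Gnorm_id2} with $v=r$: its left-hand side becomes $\tfrac12\bigl(|[r^{n+1},r^n]^T|_G^2-|[r^n,r^{n-1}]^T|_G^2\bigr)+\tfrac14|\alpha_2 r^{n+1}+\alpha_1 r^n+\alpha_0 r^{n-1}|^2$, and its right-hand side is exactly half of the coupling quantity produced above. Substituting this and the stabilization identities into the first identity, the coupling terms cancel, the telescoping parts assemble into $\mathcal{E}_W^{n+1}-\mathcal{E}_W^n$, and the remaining terms — the quadratic remainder in $r$ and the three $\theta S_i\|w^{n+1}-2w^n+w^{n-1}\|^2$ — are all nonnegative; discarding them yields $\tfrac1\tau(\mathcal{E}_W^{n+1}-\mathcal{E}_W^n)\le-\|\sqrt{M^{*,n+\theta}}\nabla\mu^{n+\theta}\|^2\le0$. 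That $\mathcal{E}_W^{n+1}$ is a genuine nonnegative energy uses the positive (semi-)definiteness of $G$ already recorded in the excerpt.

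The computation is mechanical, and I expect the only point that needs real care — the design principle behind the scheme — to be the cancellation in the previous paragraph: the coupling term $r^{n+\theta}(Z^{*,n+\theta},\tau D_\tau^\theta\phi^{n+\theta})$ produced by testing the $\mu$-equation against $\tau D_\tau^\theta\phi^{n+\theta}$ must agree with the right-hand side of the discrete $r$-equation tested against $\tau r^{n+\theta}$. This works precisely because $Z^{*,n+\theta}$ is the \emph{same} explicit extrapolation in \eqref{eqn:IMEX-WSBDF_willmore_2} and \eqref{eqn:IMEX-WSBDF_willmore_3}, because the test functions are chosen to be $\tau D_\tau^\theta\phi^{n+\theta}$ and $\tau r^{n+\theta}$, and because of the factor $\tfrac12$ in \eqref{eqn:IMEX-WSBDF_willmore_3}; the rest is the bookkeeping of the linear case with one extra stabilization term.
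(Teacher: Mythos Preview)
Your proposal is correct and follows exactly the approach the paper intends: the paper omits this proof, stating only that it is similar to Theorem~\ref{th:U_Energy_linear}, and your write-up reproduces that argument with the appropriate modifications (no $\beta\Delta^2\phi^{n+\theta}$ contribution, one extra $S_3$ stabilization term handled by the same algebraic identity). The cancellation you highlight is precisely the mechanism used in the linear case, and your bookkeeping of the factor $2$ between the $r$-equation and the $G$-norm term in $\mathcal{E}_W^{n+1}$ is right.
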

\begin{theorem}
     The solution of the $\mathcal{U}_W$-method \eqref{eqn:Uniform-WBDF2-willmore} satisfies the mass conservation.
\end{theorem}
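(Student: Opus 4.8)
The plan is to mimic verbatim the argument used for Theorem \ref{th:U_Mass_linear}, since mass conservation is a property of the dynamic equation \eqref{eqn:IMEX-WSBDF_willmore_1} alone and does not see the chemical-potential relation \eqref{eqn:IMEX-WSBDF_willmore_2} at all. First I would take the $L^2$ inner product of \eqref{eqn:IMEX-WSBDF_willmore_1} with the constant function $1$. On the right-hand side, $\bigl(\nabla\cdot(M^{*,n+\theta}\nabla\mu^{n+\theta}),1\bigr)=0$ by integration by parts together with the periodic (or no-flux) boundary conditions, so the left-hand side yields
\begin{align*}
    \left(\frac{(\theta+\tfrac12)\phi^{n+1}-2\theta\phi^{n}+(\theta-\tfrac12)\phi^{n-1}}{\tau},1\right)=0,
\end{align*}
which, after multiplying by $2\tau/(2\theta+1)$, rearranges to the two-step recurrence
\begin{align*}
    \int_{\Omega}\phi^{n+1}\,\rd\Omega=\frac{4\theta}{2\theta+1}\int_{\Omega}\phi^{n}\,\rd\Omega-\frac{2\theta-1}{2\theta+1}\int_{\Omega}\phi^{n-1}\,\rd\Omega.
\end{align*}

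The key observation is that the two coefficients $\tfrac{4\theta}{2\theta+1}$ and $-\tfrac{2\theta-1}{2\theta+1}$ sum to $1$ for every $\theta\in[\tfrac12,1]$, so if $\int_\Omega\phi^{n}\,\rd\Omega=\int_\Omega\phi^{n-1}\,\rd\Omega$ then also $\int_\Omega\phi^{n+1}\,\rd\Omega$ equals the same common value. I would then close the induction by invoking the base case $\int_\Omega\phi^{1}\,\rd\Omega=\int_\Omega\phi^{0}\,\rd\Omega$, which holds because the first step is computed by the backward Euler (BDF1) scheme, itself mass-conservative by the same inner-product-with-$1$ argument. Hence $\int_\Omega\phi^{n+1}\,\rd\Omega=\int_\Omega\phi^{n}\,\rd\Omega=\cdots=\int_\Omega\phi^{0}\,\rd\Omega$, which is the claimed conservation.

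There is essentially no obstacle here: the only point worth a line of comment is that the extra Willmore stabilization term $S_3\Delta^2(\phi^{n+1}-2\phi^n+\phi^{n-1})$, and indeed all of \eqref{eqn:IMEX-WSBDF_willmore_2}, play no role in the argument, since the proof only uses \eqref{eqn:IMEX-WSBDF_willmore_1}; and that the vanishing of $\bigl(\nabla\cdot(M^{*,n+\theta}\nabla\mu^{n+\theta}),1\bigr)$ requires exactly the boundary conditions already imposed on the model. This is why the statement can be asserted with the proof "omitted" — it is identical, word for word, to that of Theorem \ref{th:U_Mass_linear}.
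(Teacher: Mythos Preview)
Your proposal is correct and follows exactly the approach of the paper: the authors explicitly state that the proof is omitted because it is identical to that of Theorem~\ref{th:U_Mass_linear}, and your write-up reproduces that argument line by line, including the BDF1 base case noted in the remark following Theorem~\ref{th:U_Mass_linear}.
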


\section{Numerical scheme on nonuniform temporal mesh}\label{section_4}
In this section, we introduce the varia-time-step WSBDF2 method for the anisotropic model and prove its property of energy stability.\par
Choose the time levels $0={t_0}<{t_1}<{t_2}<\cdots <{t_N}=T$ with the $\tau_n:={t_n}-{t_{n-1}}$ for $1\leq n \leq N$. Let $\gamma_{n+1}=\tau_{n+1}/\tau_{n}$ be the adjacent time step ratios and set $\tau=max\{\tau_n,1\leq n \leq N\}$ as the maximum time step size. For any sequence $\{u^n\}_{n=0}^{N}$, we denote $\nabla_{\tau}u^{n}:=u^{n}-u^{n-1}$ for $1\leq n\leq N$ and define
\begin{align}\label{eqn:VST_WBDF2}
    \Tilde{D}_{\tau}^{\theta}u^{n+\theta}=\frac{1+2\theta\gamma_{n+1}}{\tau_{n+1}\left(1+\gamma_{n+1}\right)}\nabla_{\tau}u^{n+1}+\frac{\left(1-2\theta\right)\gamma_{n+1}^{2}}{\tau_{n+1}\left(1+\gamma_{n+1}\right)}\nabla_{\tau}u^{n}.
\end{align}
  However, unfortunately, on the  nonuniform temporal mesh, we cannot derive a result similar to Lemma \ref{lem:lem3.1}.
This is because, if we follow the approach similar to that used to introduce the G-norm for achieving \eqref{eqn:Gnormid}, inspired by the uniform G-stability idea, we cannot obtain a real, symmetric, and positive definite matrix, and thus we do not achieve energy dissipation.

In order to prove unconditional energy stability on nonuniform temporal mesh, we need to introduce the  Theorem 3.1 in \cite{chen2021weighted} or Lemma 2.1 in \cite{liao2022mesh}.

\begin{lem}
Let the adjacent step ratios $\gamma_{n+1}$ satisfy $0\leq\gamma_{n+1}\leq\gamma_{*}$, then it holds that
\begin{align}\label{eqn:inequality}
    \Tilde{D}_{2}^{n+\theta}\phi^{n+\theta}(\phi^{n+1}-\phi^{n})\geq R_{\theta}^{n+1}-R_{\theta}^{n}+R(\gamma_{n+1},\gamma_{n+2})\frac{\|\phi^{n+1}-\phi^{n}\|^2}{2\tau_{n+1}},
\end{align}
where $\gamma_{*}$ is the positive root of equation
\begin{align}
    (1-2\theta)^2\gamma_{*}^{3}-4\theta^2\gamma_{*}^{2}-4\theta\gamma_{*}-1=0,\nonumber
\end{align}
and
\begin{align*}
  R_{\theta}^{n}=\frac{(2\theta-1)\gamma_{n+1}^{3/2}}{2(1+\gamma_{n+1})}\frac{\|\phi^{n}-\phi^{n-1}\|^2}{\tau_{n}},\quad R(x,y):=\frac{2(1+2\theta x)+(1-2\theta)x^{3/2}}{1+x}-\frac{(2\theta-1)y^{3/2}}{1+y}.
\end{align*}
\end{lem}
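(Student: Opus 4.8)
The plan is to prove \eqref{eqn:inequality} by a direct computation rather than through any matrix factorization, since — as observed above — the $G$-matrix device is unavailable on a nonuniform mesh. First I would set $a:=\nabla_\tau\phi^{n+1}=\phi^{n+1}-\phi^{n}$ and $b:=\nabla_\tau\phi^{n}=\phi^{n}-\phi^{n-1}$, multiply the claimed inequality through by $\tau_{n+1}>0$, and use $\tau_{n+1}=\gamma_{n+1}\tau_n$ together with the definition \eqref{eqn:VST_WBDF2} to rewrite its left-hand side as
\[
  \tau_{n+1}\bigl(\Tilde{D}_{\tau}^{\theta}\phi^{n+\theta},a\bigr)
  =\frac{1+2\theta\gamma_{n+1}}{1+\gamma_{n+1}}\|a\|^2+\frac{(1-2\theta)\gamma_{n+1}^2}{1+\gamma_{n+1}}(b,a).
\]
On the right-hand side, after the same multiplication, one gets $\tau_{n+1}R_{\theta}^{n+1}=\dfrac{(2\theta-1)\gamma_{n+2}^{3/2}}{2(1+\gamma_{n+2})}\|a\|^2$ and $\tau_{n+1}R_{\theta}^{n}=\dfrac{(2\theta-1)\gamma_{n+1}^{5/2}}{2(1+\gamma_{n+1})}\|b\|^2$, while $\tfrac12 R(\gamma_{n+1},\gamma_{n+2})\|a\|^2$ splits into the three pieces $\dfrac{1+2\theta\gamma_{n+1}}{1+\gamma_{n+1}}\|a\|^2$, $\dfrac{(1-2\theta)\gamma_{n+1}^{3/2}}{2(1+\gamma_{n+1})}\|a\|^2$ and $-\dfrac{(2\theta-1)\gamma_{n+2}^{3/2}}{2(1+\gamma_{n+2})}\|a\|^2$.

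The key observation is that the last of those three pieces cancels \emph{exactly} against $\tau_{n+1}R_{\theta}^{n+1}$, so the right-hand side becomes free of $\gamma_{n+2}$, while the piece with coefficient $\tfrac{1+2\theta\gamma_{n+1}}{1+\gamma_{n+1}}$ cancels against the matching term on the left. What remains of (left minus right) then collapses to
\[
  \frac{2\theta-1}{2(1+\gamma_{n+1})}\Bigl(\gamma_{n+1}^{5/2}\|b\|^2-2\gamma_{n+1}^{2}(b,a)+\gamma_{n+1}^{3/2}\|a\|^2\Bigr)
  =\frac{(2\theta-1)\gamma_{n+1}^{3/2}}{2(1+\gamma_{n+1})}\bigl\|a-\sqrt{\gamma_{n+1}}\,b\bigr\|^2\ge 0,
\]
because $\theta\in[\tfrac12,1]$, which is exactly \eqref{eqn:inequality}. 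The borderline case $\theta=\tfrac12$ degenerates to the trivial identity $\Tilde{D}_{\tau}^{1/2}\phi^{n+1/2}=\tau_{n+1}^{-1}\nabla_\tau\phi^{n+1}$ with $R\equiv2$, for which both sides coincide.

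I do not anticipate a genuine obstacle: the computation is essentially bookkeeping. The two points that require care are the sign of $1-2\theta\le0$ when rearranging, and the index bookkeeping — $R_{\theta}^{n}$ carries the ratio $\gamma_{n+1}$ whereas $R_{\theta}^{n+1}$ carries $\gamma_{n+2}$, which is precisely the mechanism that makes the $\gamma_{n+2}$-dependent pieces telescope. It is also worth recording that the hypothesis $0\le\gamma_{n+1}\le\gamma_*$ is not actually used to obtain the displayed inequality, which holds for arbitrary ratios; rather, in conjunction with $\gamma_{n+2}\le\gamma_*$ it ensures $R(\gamma_{n+1},\gamma_{n+2})\ge0$, the property actually invoked once \eqref{eqn:inequality} is fed into the subsequent energy estimate. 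That nonnegativity is elementary, the relevant fact being that the defining cubic can be recast as $(2\theta-1)\gamma_*^{3/2}=2\theta\gamma_*+1$, so that $R(\gamma_*,\gamma_*)=0$.
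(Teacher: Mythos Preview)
Your computation is correct. Setting $a=\nabla_\tau\phi^{n+1}$, $b=\nabla_\tau\phi^{n}$ and multiplying through by $\tau_{n+1}$, the $\gamma_{n+2}$-dependent pieces indeed cancel exactly between $\tau_{n+1}R_\theta^{n+1}$ and the last summand of $\tfrac12 R(\gamma_{n+1},\gamma_{n+2})\|a\|^2$, the $\tfrac{1+2\theta\gamma_{n+1}}{1+\gamma_{n+1}}\|a\|^2$ terms cancel between the two sides, and the residual collapses to the perfect square $\tfrac{(2\theta-1)\gamma_{n+1}^{3/2}}{2(1+\gamma_{n+1})}\|a-\sqrt{\gamma_{n+1}}\,b\|^2\ge 0$. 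Your remark that the step-ratio hypothesis is used only to secure $R(\gamma_{n+1},\gamma_{n+2})\ge 0$ downstream, not for \eqref{eqn:inequality} itself, is also accurate, as is the observation that the defining cubic factors as $(2\theta-1)^2\gamma_*^3=(2\theta\gamma_*+1)^2$.

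As for comparison with the paper: the paper does not give its own proof of this lemma. It is stated there as a quotation of Theorem~3.1 in \cite{chen2021weighted} (equivalently Lemma~2.1 in \cite{liao2022mesh}), and the text immediately following the lemma only verifies the auxiliary monotonicity of $R(x,y)$ and the fact $R(\gamma_*,\gamma_*)=0$. Your argument therefore supplies a self-contained elementary proof where the paper defers to the literature; the direct perfect-square identity you exhibit is in fact the same mechanism underlying the cited references, so there is no substantive divergence in method.
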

\par
It is easy to verify that $R(x,y)$ is increasing in  interval $(0,1)$ and decreasing in the interval with respect to $x$. Additionally, $R(x, y)$ is decreasing with respect to $y$. Then we have
\begin{align*}
    R(x,y)\geq min\{R(0,\gamma_{*}),R(\gamma_{*},\gamma_{*})\}=\frac{(1-2\theta)^2\gamma_{*}^{3}-4\theta^2\gamma_{*}^{2}-4\theta\gamma_{*}-1}{1+\gamma_{*}}=0,\quad 0\leq x,y\leq\gamma_{*}.
\end{align*}
Moreover, we can easily check that the root $\gamma_{*}$ is decreasing for $\theta\in[\frac{1}{2},1]$. In particular, $\gamma_{*}=4.8645365123$ if $\theta=1$ and $\gamma_{*}\rightarrow\infty$ if $\theta=\frac{1}{2}$.

\subsection{Linear regularization model}
Different from the SAV approach on uniform temporal mesh, we reintroduce an auxiliary variable, denoted by
\begin{align*}
    u(t)=\sqrt{\widetilde{E}_1(\phi)}:=\sqrt{\int_{\Omega}\gamma(\mathbf{n})\left(\frac{1}{2}|\nabla\phi|^2+\frac{1}{\epsilon^2}F(\phi)\right)-\frac{\lambda_1}{2\epsilon^2}\left|\phi\right|^2 -\frac{\lambda_2}{2}\left|\nabla\phi\right|^2 d\Omega+C_0},
\end{align*}
where $C_0$ is a positive constant selected to ensure the value $\widetilde{E}_1$ is positive. Now, the total free energy \eqref{eqn:total_energy} can be rewritten as
\begin{align}\label{eqn:Vlinear_total_energy}
    \mathcal{E}(u,\phi)=u^2+\frac{\beta}{2}\int_\Omega(\Delta\phi)^2d\Omega+\int_\Omega\left(\frac{\lambda_1}{2\epsilon^2}\left|\phi\right|^2 +\frac{\lambda_2}{2}\left|\nabla\phi\right|^2\right)d\Omega-C_0.
\end{align}
Then, we can obtain a modified gradient flow
\begin{subequations}\label{eqn:Vmodified_linear_gradient_flow}
    \begin{align}
\phi_t&=\nabla\cdot(M(\phi)\nabla\mu),\\
\mu&=\frac{u}{\sqrt{\widetilde{E}_1(\phi)}}V(\xi)\mathcal{H(\phi)}+\beta\Delta^2\phi+\frac{\lambda_1}{\epsilon^2}\phi-\lambda_2\Delta\phi,\\
u_t&=\frac{V(\xi)}{2\sqrt{\widetilde{E}_1(\phi)}}\int_{\Omega}\mathcal{H}(\phi)\phi_t d\Omega,
    \end{align}
\end{subequations}
where
\begin{align}
    \mathcal{H}(\phi)=\frac{1}{\epsilon^2}\gamma(\mathbf{n})f(\phi)-\nabla\cdot\mathbf{m}-\frac{\lambda_1}{\epsilon^2}\phi+\lambda_2\Delta\phi,
\end{align}
and $V(\xi)\in C^2(\mathbb{R})$ is a real positive function with $V(1)\equiv 1$. Taking $L^2$ inner product of equations in \eqref{eqn:Vmodified_linear_gradient_flow} with $\mu$, $\phi_t$ and $2u$ respectively, we can obtain
\begin{align}
    \frac{d}{dt}\mathcal{E}(u,\phi)=-\left\|\sqrt{M(\phi)}\nabla\mu\right\|^2\leq 0,
\end{align}
which means that the modified system \eqref{eqn:Vmodified_linear_gradient_flow} satisfies the unconditional energy stability.\par
In what follows, we construct a numerical scheme with variable-time-step method to discretize the above linear regularized system.
\begin{itemize}
    \item {\bf{$\mathcal{V}_L$-method}.} Given $\phi^n$, $r^n$ and $\phi^{n-1}$, update $\phi^{n+1}$, $r^{n+1}$ by solving
\end{itemize}
\begin{subequations}\label{eqn:VTS-WBDF2-Linear}
    \begin{align}
        &\Tilde{D}_{2}^{n+\theta}\phi^{n+\theta}=\nabla\cdot(M^{*,n+\theta}\nabla\mu^{n+\theta}),\label{eqn:IMEX-VST_WBDF_linear_1}\\
        &\mu^{n+\theta}=\frac{u^{n+1}}{\sqrt{\widetilde{E}_1^{n}}}V(\xi^{n+1})\mathcal{H}^{*,n+\theta}+\beta\Delta^2\phi^{n+\theta}+\frac{\lambda_1}{\epsilon^2}\phi^{n+\theta}-\lambda_2\Delta\phi^{n+\theta},\label{eqn:IMEX-VST_WBDF_linear_2}\\
        &\frac{u^{n+1}-u^n}{\tau_{n+1}}=\frac{V(\xi^{n+1})}{2\sqrt{\widetilde{E}_1^{n}}}\int_{\Omega}\mathcal{H}^{*,n+\theta}\frac{\phi^{n+1}-\phi^n}{\tau_{n+1}}d\Omega,\label{eqn:IMEX-VST_WBDF_linear_3}
    \end{align}
\end{subequations}
where
\begin{align*}
    \xi^{n+1}=\frac{u^{n+1}}{\sqrt{\widetilde{E}_1^{n}}},\quad\phi^{n+\theta}=\theta\phi^{n+1}+(1-\theta)\phi^{n},\quad\phi^{*,n+\theta}=(1+\theta\gamma_{n+1})\phi^{n}-\theta\gamma_{n+1} \phi^{n-1},
\end{align*}
$\lambda_i$ (i=1,2) are positive constants and $\lim\limits_{\xi\to 1}\frac{V(\xi)-1}{1-\xi}=1$ with $V(1)=1$. Considering \eqref{eqn:IMEX-VST_WBDF_linear_3}, it is clear that the numerical approximations for $u^{n+1}$ is  a first-order approximation of $u(t)$, and thus, $\xi^{n+1}$ have only first-order accuracy of $1$ in time, which means that
\begin{align}
    u^{n+1}=u(t^{n+1})+O(\tau_{n+1}),\quad \xi^{n+1}=1+O(\tau_{n+1}).
\end{align}
Therefore, it may initially appear that the scheme we construct is first-order accuracy, but in reality, the phase function $\phi$  in the original gradient flow \eqref{eqn:linear_gradient_flow} is not directly influenced by $r(t)$ and $\xi$; instead, it is controlled by $\frac{u(t)}{\sqrt{\widetilde{E}_{1}}}V(\xi)$. Hence, the scheme we develop is, in fact, second-order accuracy. Due to the limitations in the length of this paper, we plan to conduct an error analysis of this scheme in our future work.\par
Next, we will show how to efficiently solve the $\mathcal{U}_L$-method \eqref{eqn:VTS-WBDF2-Linear}. It follows from
\eqref{eqn:IMEX-VST_WBDF_linear_1} that
\begin{align}
    \begin{aligned}
        A(\phi^{n+1})-\xi^{n+1}V(\xi^{n+1})\nabla\cdot(M^{*,n+\theta}\nabla(\mathcal{H}^{*,n+\theta}))=g^n,
    \end{aligned}
\end{align}
where

\begin{align*}
\left\{
    \begin{aligned}
        A=&\frac{1+2\theta\gamma_{n+1}}{\tau_{n+1}\left(1+\gamma_{n+1}\right)}I-\theta\nabla\cdot\left(M^{*,n+\theta}\nabla\left(\beta\Delta^2+\frac{\lambda_1}{\epsilon^2}I-\lambda_2\Delta\right)\right),\\
        g^n=&\frac{1}{\tau_{n+1}}\left((1+(2\theta-1)\gamma_{n+1})\phi^{n}-\frac{(2\theta-1)\gamma_{n+1}^{2}}{1+\gamma_{n+1}}\phi^{n-1}\right)\\
        &+(1-\theta)\nabla\cdot\left(M^{*,n+\theta}\nabla\left(\beta\Delta^2+\frac{\lambda_1}{\epsilon^2}I-\lambda_2\Delta\right)\right).
    \end{aligned}
\right.
\end{align*}
Then we denote
\begin{align}\label{eqn:equation_45}
    \phi^{n+1}=\phi_1^{n+1}+\xi^{n+1}V(\xi^{n+1})\phi_2^{n+1}
\end{align}
with $\phi_1^{n+1}$ and $\phi_2^{n+1}$ being solved respectively by
    \begin{align}\label{eqn:equation_46}
    A(\phi_1^{n+1})=g^n,\quad
    A(\phi_2^{n+1})=\nabla\cdot(M^{*,n+\theta}\nabla(\mathcal{H}^{*,n+\theta})).
\end{align}
Next, we numerically solve for $\xi^{n+1}$ by substituting \eqref{eqn:equation_45} into \eqref{eqn:IMEX-VST_WBDF_linear_3}, resulting in
\begin{align}\label{eqn:equation_47}
    \begin{aligned}
      \xi^{n+1}\sqrt{\widetilde{E}_1^n}-u^n-\frac{V(\xi^{n+1})}{2\sqrt{\widetilde{E}_1^n}}\left[\xi^{n+1}V(\xi^{n+1})\left(\mathcal{H}^{*,n+\theta},\phi_2^{n+1}\right)+\left(\mathcal{H}^{*,n+\theta},\phi_1^{n+1}-\phi^n\right)\right]=0.
    \end{aligned}
\end{align}
Denoting the left side of the above equation by $W(\xi^{n+1})$, combining $V(1)=1$ and the turth $V'(1)=-1$, we obtain the following identities
\begin{align}
    \begin{aligned}
        &W(1)=\sqrt{\widetilde{E}_1^n}-u^n-\frac{\left(\mathcal{H}^{*,n+\theta},\phi_1^{n+1}+\phi_2^{n+1}-\phi^n\right)}{2\sqrt{\widetilde{E}_1^n}}\sim\mathrm{O}(\tau_{n+1}),\\
        &W'(1)=\sqrt{\widetilde{E}_1^n}+\frac{\left(\mathcal{H}^{*,n+\theta},\phi_1^{n+1}+\phi_2^{n+1}-\phi^n\right)}{2\sqrt{\widetilde{E}_1^n}}\sim\sqrt{\widetilde{E}_1^n}+\mathrm{O}(\tau_{n+1}).
    \end{aligned}
\end{align}
Therefore, $\xi^{n+1}$ can be efficiently computed by solving the nonlinear algebraic equation \eqref{eqn:equation_47} using Newton's iteration, starting with $\xi^0=1$ and a sufficiently large $C$, since we only require first-order accuracy for $\xi^{n+1}$ in time within the {\bf{$\mathcal{V}_L$}}-method \eqref{eqn:VTS-WBDF2-Linear}.
To sum up, the $\mathcal{V}_L$-method \eqref{eqn:VTS-WBDF2-Linear} can be easily implemented in the following steps:
     \begin{itemize}
     \item  Compute $\phi_1^{n+1}$ and $\phi_2^{n+1}$ form \eqref{eqn:equation_46};
     \end{itemize}
     \begin{itemize}
     \item  Compute $\xi^{n+1}$ from \eqref{eqn:equation_47}, and then $\phi^{n+1}$ can be obtain by using \eqref{eqn:equation_45}.
     \end{itemize}
     Hence, this method is extremely efficient and easy to implement.

Then, we give the proof that the $\mathcal{V}_L$-method \eqref{eqn:VE_linear} is unconditionally energy stable, and
preserves mass conservation
\begin{theorem}\label{th:V_Energy_linear}
    For $\theta\in[\frac{1}{2},1]$ and $0\leq\gamma_{n+1}\leq\gamma_{*}$, with $M(\phi)$ in \eqref{eqn:linear_gradient_flow} is a positive constant or a time-dependent positive function, the $\mathcal{V}_L$-method is unconditionally energy stable in the sense that
    \begin{align}
        \tilde{\mathcal{E}}_{L}^{n+1}-\tilde{\mathcal{E}}_{L}^{n}\leq 0,
    \end{align}
    where the modified energy is defined by
    \begin{align} \label{eqn:VE_linear}
      \begin{aligned}\tilde{\mathcal{E}}_{L}^{n+1}=&\frac{(2\theta-1)\gamma_{n+2}^{3/2}}{2(1+\gamma_{n+2})}\frac{\|\nabla^{-1}(\phi^{n+1}-\phi^{n})\|^2}{M\tau_{n+1}}+\frac{\beta}{2}\|\Delta\phi^{n+1}\|^2\\
      &+\frac{\lambda_1}{2\epsilon^2}\|\phi^{n+1}\|^2+\frac{\lambda_2}{2}\|\nabla\phi^{n+1}\|^2+|u^{n+1}|^2.
      \end{aligned}
    \end{align}
\end{theorem}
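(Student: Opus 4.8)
The plan is to run the entire estimate in the $H^{-1}$ metric, exploiting the hypothesis that $M$ is spatially constant, so that $\nabla\cdot(M^{*,n+\theta}\nabla\,\cdot\,)=M\Delta$. First I would observe that testing \eqref{eqn:IMEX-VST_WBDF_linear_1} with $1$ gives, exactly as in Theorem~\ref{th:U_Mass_linear}, mass conservation; hence every increment $\phi^{n+1}-\phi^{n}$ is mean-zero, so $(-\Delta)^{-1}$ applies to it, and \eqref{eqn:IMEX-VST_WBDF_linear_1} yields $(-\Delta)^{-1}\Tilde{D}_{2}^{n+\theta}\phi^{n+\theta}=-M\mu^{n+\theta}+\mathrm{const}$. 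The decisive step is to test \eqref{eqn:IMEX-VST_WBDF_linear_1} with $(-\Delta)^{-1}(\phi^{n+1}-\phi^{n})$: integrating by parts under the periodic boundary conditions, this produces the \emph{identity} $(\Tilde{D}_{2}^{n+\theta}\phi^{n+\theta},\phi^{n+1}-\phi^{n})_{-1}=-M(\mu^{n+\theta},\phi^{n+1}-\phi^{n})$, where $(v,w)_{-1}:=((-\Delta)^{-1}v,w)$ and $\|v\|_{-1}=\|\nabla^{-1}v\|$.

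Next I would apply the lemma stated above, inequality \eqref{eqn:inequality}, to the sequence $w^{k}:=\nabla^{-1}\phi^{k}$ instead of $\phi^{k}$; this is legitimate because \eqref{eqn:inequality} is an identity-plus-inequality valid verbatim in any inner-product space and $\nabla^{-1}$ commutes with the temporal operator $\Tilde{D}_{2}^{n+\theta}$. Since $(\Tilde{D}_{2}^{n+\theta}w^{n+\theta},w^{n+1}-w^{n})=(\Tilde{D}_{2}^{n+\theta}\phi^{n+\theta},\phi^{n+1}-\phi^{n})_{-1}$, combining with the identity just derived and dividing through by $-M<0$ gives the one-sided bound $(\mu^{n+\theta},\phi^{n+1}-\phi^{n})\le \tfrac1M\mathcal R_{\theta}^{n}-\tfrac1M\mathcal R_{\theta}^{n+1}-\tfrac1M R(\gamma_{n+1},\gamma_{n+2})\tfrac{\|\nabla^{-1}(\phi^{n+1}-\phi^{n})\|^{2}}{2\tau_{n+1}}$, where $\mathcal R_{\theta}^{k}=\tfrac{(2\theta-1)\gamma_{k+1}^{3/2}}{2(1+\gamma_{k+1})}\tfrac{\|\nabla^{-1}(\phi^{k}-\phi^{k-1})\|^{2}}{\tau_{k}}$. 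Note that $\tfrac1M\mathcal R_{\theta}^{n+1}$ is precisely the first term of $\tilde{\mathcal E}_{L}^{n+1}$ and that the last term is $\ge 0$ for $0\le\gamma_{n+1}\le\gamma_{*}$ by the discussion following \eqref{eqn:inequality}.

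Then I would evaluate $(\mu^{n+\theta},\phi^{n+1}-\phi^{n})$ from \eqref{eqn:IMEX-VST_WBDF_linear_2}. The auxiliary-variable term collapses via \eqref{eqn:IMEX-VST_WBDF_linear_3}: multiplying that equation by $\tau_{n+1}$ gives $(\mathcal H^{*,n+\theta},\phi^{n+1}-\phi^{n})=\tfrac{2\sqrt{\widetilde{E}_{1}^{n}}}{V(\xi^{n+1})}(u^{n+1}-u^{n})$, whence $\tfrac{u^{n+1}}{\sqrt{\widetilde{E}_{1}^{n}}}V(\xi^{n+1})(\mathcal H^{*,n+\theta},\phi^{n+1}-\phi^{n})=2u^{n+1}(u^{n+1}-u^{n})=|u^{n+1}|^{2}-|u^{n}|^{2}+|u^{n+1}-u^{n}|^{2}$; note the $V$-factor cancels exactly, so no growth bound on $V$ or $\xi$ is needed. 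For the remaining three terms I would use the elementary identity $(\theta a+(1-\theta)b,\,a-b)=\tfrac12(\|a\|^{2}-\|b\|^{2})+(\theta-\tfrac12)\|a-b\|^{2}$ with $(a,b)$ equal to $(\Delta\phi^{n+1},\Delta\phi^{n})$, to $(\phi^{n+1},\phi^{n})$, and (after one integration by parts) to $(\nabla\phi^{n+1},\nabla\phi^{n})$; since $\theta\ge\tfrac12$ each yields a telescoping difference plus a nonnegative remainder. Altogether $(\mu^{n+\theta},\phi^{n+1}-\phi^{n})$ equals $|u^{n+1}|^{2}+\tfrac{\beta}{2}\|\Delta\phi^{n+1}\|^{2}+\tfrac{\lambda_{1}}{2\epsilon^{2}}\|\phi^{n+1}\|^{2}+\tfrac{\lambda_{2}}{2}\|\nabla\phi^{n+1}\|^{2}$ minus the same expression at level $n$, plus a sum of nonnegative terms.

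Finally I would insert this equality into the one-sided bound obtained above and rearrange: all the nonnegative leftovers — $|u^{n+1}-u^{n}|^{2}$, the three $(\theta-\tfrac12)\|\cdot\|^{2}$ terms, and $\tfrac1M R(\gamma_{n+1},\gamma_{n+2})\tfrac{\|\nabla^{-1}(\phi^{n+1}-\phi^{n})\|^{2}}{2\tau_{n+1}}$ — pass to the favourable side, giving $\tilde{\mathcal E}_{L}^{n+1}-\tilde{\mathcal E}_{L}^{n}\le 0$. The main obstacle, in my view, is recognizing that the estimate has to be carried out in the $H^{-1}$ inner product, i.e. testing \eqref{eqn:IMEX-VST_WBDF_linear_1} with $(-\Delta)^{-1}(\phi^{n+1}-\phi^{n})$ and applying \eqref{eqn:inequality} to $\nabla^{-1}\phi$ rather than to $\phi$; this is exactly what forces the assumption that $M$ be (spatially) constant, so that $(-\Delta)^{-1}$ and $\nabla\cdot(M\nabla\,\cdot\,)$ compose to $-M$ times the identity on mean-zero increments, and it is what creates the $\|\nabla^{-1}(\phi^{n+1}-\phi^{n})\|^{2}/(M\tau_{n+1})$ term in the modified energy. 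Everything else is bookkeeping, the one genuinely structural point being the exact cancellation of $V(\xi^{n+1})$, which is where the particular ``new SAV'' formulation of \cite{huang2020highly} is essential.
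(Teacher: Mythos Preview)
Your proposal is correct and follows essentially the same route as the paper: test \eqref{eqn:IMEX-VST_WBDF_linear_1} with $(-\Delta)^{-1}(\phi^{n+1}-\phi^{n})$ (the paper uses the equivalent test function $-\tfrac{1}{M}\Delta^{-1}(\phi^{n+1}-\phi^{n})$), apply the kernel inequality \eqref{eqn:inequality} in the $H^{-1}$ inner product, expand $(\mu^{n+\theta},\phi^{n+1}-\phi^{n})$ via \eqref{eqn:IMEX-VST_WBDF_linear_2}, eliminate the auxiliary-variable term with \eqref{eqn:IMEX-VST_WBDF_linear_3} multiplied by $2u^{n+1}$, and telescope the $\beta$, $\lambda_{1}$, $\lambda_{2}$ terms using the identity for $(\theta a+(1-\theta)b)(a-b)$. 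Your write-up is in fact more explicit than the paper's on two points the paper leaves tacit: that mass conservation is what makes $(-\Delta)^{-1}(\phi^{n+1}-\phi^{n})$ well defined, and that the $V(\xi^{n+1})$ factors cancel exactly so no control on $\xi^{n+1}$ is required.
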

\begin{proof}
    Setting $M^{*,n+\theta}=M$ in \eqref{eqn:IMEX-VST_WBDF_linear_1} and taking the inner products of \eqref{eqn:IMEX-VST_WBDF_linear_1} and \eqref{eqn:IMEX-VST_WBDF_linear_2} with $-\frac{1}{M}\Delta^{-1}(\phi^{n+1}-\phi^n)$ and $\phi^{n+1}-\phi^n$ respectively
        \begin{align}
        \left(\Tilde{D}_{2}^{n+\theta}\phi^{n+\theta},-\frac{1}{M}\Delta^{-1}(\phi^{n+1}-\phi^n)\right)=-\left(\mu^{n+\theta},\phi^{n+1}-\phi^n\right),
        \end{align}
        and
        \begin{align}
              \left(\mu^{n+\theta},\phi^{n+1}-\phi^n\right)=&\frac{u^{n+1}}{\sqrt{\widetilde{E}_1^{n}}}V(\xi^{n+1})\left(\mathcal{H}^{*,n+\theta},\phi^{n+1}-\phi^n\right)+\beta\left(\Delta^2\phi^{n+\theta},\phi^{n+1}-\phi^n\right)\nonumber\\
            &+\frac{\lambda_1}{\epsilon^2}\left(\phi^{n+\theta},\phi^{n+1}-\phi^n\right)-\lambda_2\left(\Delta\phi^{n+\theta},\phi^{n+1}-\phi^n\right).
        \end{align}
        Then, by multiplying \eqref{eqn:IMEX-VST_WBDF_linear_3} with $2u^{n+1}$, we obtain
        \begin{align}
            \frac{2u^{n+1}(u^{n+1}-u^{n})}{\tau_{n+1}}=\frac{u^{n+1}V(\xi^{n+1})}{\sqrt{\widetilde{E}_1^{n}}}\int_{\Omega}\mathcal{H}^{*,n+\theta}\frac{\phi^{n+1}-\phi^n}{\tau_{n+1}}d\Omega.
        \end{align}
        It follows from the above equations that
        \begin{align}
        \begin{aligned}
            &\left(\Tilde{D}_{2}^{n+\theta}\phi^{n+\theta},-\frac{1}{M}\Delta^{-1}(\phi^{n+1}-\phi^n)\right)+\beta\left(\Delta^2\phi^{n+\theta},\phi^{n+1}-\phi^n\right)+2u^{n+1}(u^{n+1}-u^{n})\\
            &+\frac{\lambda_1}{\epsilon^2}\left(\phi^{n+\theta},\phi^{n+1}-\phi^n\right)+\lambda_2\left(\nabla\phi^{n+\theta},\nabla(\phi^{n+1}-\phi^n)\right)=0.
            \end{aligned}
        \end{align}
        By applying the inequality \eqref{eqn:inequality} and the identities
        \begin{subequations}
            \begin{align}
              &2\left(\theta a^{k+1}+(1-\theta)a^k\right)\left(a^{k+1}-a^k\right)=|a^{k+1}|^2-|a^{k}|^2+(2\theta-1)|a^{k+1}-a^{k}|^2,\\
              &2a^{k+1}\left(a^{k+1}-a^k\right)=|a^{k+1}|^2-|a^{k}|^2+|a^{k+1}-a^{k}|^2,
            \end{align}
        \end{subequations}
        we have
        \begin{align}
        \begin{aligned}
            &\frac{(2\theta-1)\gamma_{n+2}^{3/2}}{2(1+\gamma_{n+2})}\frac{\|\nabla^{-1}\phi^{n+1}-\nabla^{-1}\phi^{n}\|^2}{M\tau_{n+1}}-\frac{(2\theta-1)\gamma_{n+1}^{3/2}}{2(1+\gamma_{n+1})}\frac{\|\nabla^{-1}\phi^{n}-\nabla^{-1}\phi^{n-1}\|^2}{M\tau_{n}}+\frac{\beta}{2}\|\Delta\phi^{n+1}\|^2\\
            &-\frac{\beta}{2}\|\Delta\phi^{n}\|^2+\frac{\lambda_1}{2\epsilon^2}\|\phi^{n+1}\|^2-\frac{\lambda_1}{2\epsilon^2}\|\phi^{n}\|^2+\frac{\lambda_2}{2}\|\nabla\phi^{n+1}\|^2-\frac{\lambda_2}{2}\|\nabla\phi^{n}\|^2\\
            &+|u^{n+1}|^2-|u^{n}|^2+R(\gamma_{n+1},\gamma_{n+2})\frac{\|\phi^{n+1}-\phi^{n}\|^2}{2M\tau_{n+1}}+\frac{\beta(2\theta-1)}{2}\|\Delta\phi^{k}-\Delta\phi^{k-1}\|^2\\
            &+\frac{\lambda_1(2\theta-1)}{2\epsilon^2}\|\phi^{k}-\phi^{k-1}\|^2+\frac{\lambda_2(2\theta-1)}{2}\|\nabla\phi^{k}-\nabla\phi^{k-1}\|^2+|u^{k+1}-u^{k}|^2\leq0.
            \end{aligned}
        \end{align}
        Finally, we obtain the desired result by omitting the positive terms.
\end{proof}

\begin{theorem}\label{th:V_Mass_linear}
         The solution of the $\mathcal{V}_L$-method \eqref{eqn:VTS-WBDF2-Linear} satisfies the mass conservation.
     \end{theorem}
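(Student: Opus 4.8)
The plan is to imitate the proof of Theorem~\ref{th:U_Mass_linear}, replacing the uniform operator $D_{\tau}^{\theta}$ by its variable-step counterpart $\Tilde{D}_{2}^{n+\theta}$ from \eqref{eqn:VST_WBDF2}. First I would take the $L^2$ inner product of the phase equation \eqref{eqn:IMEX-VST_WBDF_linear_1} with the constant function $1$. Under periodic (or no-flux) boundary conditions the right-hand side vanishes after integration by parts, since $\int_{\Omega}\nabla\cdot(M^{*,n+\theta}\nabla\mu^{n+\theta})\,d\Omega=0$. Writing $m^{n}:=\int_{\Omega}\phi^{n}\,d\Omega$ for brevity and using the definition of $\Tilde{D}_{2}^{n+\theta}$, this produces the scalar two-term recurrence
\begin{align*}
 \frac{1+2\theta\gamma_{n+1}}{\tau_{n+1}(1+\gamma_{n+1})}\bigl(m^{n+1}-m^{n}\bigr)+\frac{(1-2\theta)\gamma_{n+1}^{2}}{\tau_{n+1}(1+\gamma_{n+1})}\bigl(m^{n}-m^{n-1}\bigr)=0 .
\end{align*}

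Next I would argue by induction on $n$. For the base case, $\phi^{1}$ is computed by the backward Euler (BDF1) step, so taking the inner product of that first step with $1$ gives $m^{1}=m^{0}$, exactly as noted in the remark following Theorem~\ref{th:U_Mass_linear}. For the inductive step, assume $m^{n}=m^{n-1}$; then the second term in the recurrence above drops out, leaving $(1+2\theta\gamma_{n+1})(m^{n+1}-m^{n})=0$. Since $\theta\in[\tfrac12,1]$ and $\gamma_{n+1}\ge 0$, the factor $1+2\theta\gamma_{n+1}$ is strictly positive, hence $m^{n+1}=m^{n}$. Chaining these equalities yields $m^{n+1}=m^{n}=\cdots=m^{1}=m^{0}$, which is the asserted mass conservation.

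I do not expect any real obstacle here: the argument is purely algebraic once the divergence term is annihilated by the boundary conditions, and the only point needing (trivial) attention is the positivity of the coefficient $1+2\theta\gamma_{n+1}$ on the admissible parameter range. In contrast with the energy-stability proof of Theorem~\ref{th:V_Energy_linear}, neither the step-ratio restriction $\gamma_{n+1}\le\gamma_{*}$ nor the auxiliary variable $u$ plays any role.
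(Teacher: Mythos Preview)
Your proposal is correct and follows essentially the same approach as the paper's own proof: take the $L^2$ inner product of \eqref{eqn:IMEX-VST_WBDF_linear_1} with $1$, use the boundary conditions to kill the divergence term, obtain the scalar two-term recurrence for $m^n=\int_\Omega\phi^n\,d\Omega$, and conclude by induction with the BDF1 start. Your explicit observation that $1+2\theta\gamma_{n+1}>0$ is exactly the nonzero-coefficient check the induction needs.
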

     \begin{proof}
         By taking the inner product of equation \eqref{eqn:IMEX-VST_WBDF_linear_1} with 1, we can immediately obtain
         \begin{align*}
         \left(\frac{1+2\theta\gamma_{n+1}}{\tau_{n+1}\left(1+\gamma_{n+1}\right)}\nabla_{\tau}\phi^{n+1}+\frac{\left(1-2\theta\right)\gamma_{n+1}^{2}}{\tau_{n+1}\left(1+\gamma_{n+1}\right)}\nabla_{\tau}\phi^{n},1\right)=\nabla\cdot(M^{*,n+\theta}\nabla\mu^{n+\theta})=0,
           \end{align*}
          namely

          \begin{align*}
              \int_{\Omega}\phi^{n+1}d\Omega=\left(1-\frac{(1-2\theta)\gamma_{n}^{2}}{1+2\theta\gamma_{n}}\right)\int_{\Omega}\phi^{n}d\Omega-\frac{(1-2\theta)\gamma_{n}^{2}}{1+2\theta\gamma_{n}}\int_{\Omega}\phi^{n-1}d\Omega.
          \end{align*}
         By applying mathematical induction with the initial condition $\int_{\Omega}\phi^{1}d\Omega=\int_{\Omega}\phi^{0}d\Omega$, we can conclude
         \begin{align}
           \int_{\Omega}\phi^{n+1}d\Omega=\int_{\Omega}\phi^{n}d\Omega=\cdots=\int_{\Omega}\phi^{1}d\Omega=\int_{\Omega}\phi^{0}d\Omega.
         \end{align}
         The proof is complete.
     \end{proof}

\subsection{ Willmore regularization model}For the Willmore regularization model on the nonuniform temporal mesh, we need to redefine the auxiliary variable $u(t)$ as follows
\begin{align*}   u(t)=\sqrt{\widetilde{E}_2(\phi)}:=\sqrt{\int_{\Omega}\left(g(\phi)-\frac{\lambda_1}{2\epsilon^2}\left|\phi\right|^2 -\frac{\lambda_2}{2}\left|\nabla\phi\right|^2-\frac{\lambda_3}{2}|\Delta\phi|^2\right)d\Omega+C_0},
\end{align*}
where $C_0$ is a positive constant to ensure $\widetilde{E}_2(\phi)>0$ and
\begin{align*}
g(\phi)=\gamma(\mathbf{n})\left(\frac{1}{2}|\nabla\phi|^2+\frac{1}{\epsilon^2}F(\phi)\right)+\frac{\beta}{2}\left(\Delta\phi-\frac{1}{\epsilon^2}f(\phi)\right)^2.
\end{align*}
Thus, a new modified energy for the Willmore regularization model can be given by
\begin{align}\label{eqn:Vwillmore_total_energy}
    \mathcal{E}(u,\phi)=u^2+\int_\Omega\left(\frac{\lambda_1}{2\epsilon^2}\left|\phi\right|^2 +\frac{\lambda_2}{2}\left|\nabla\phi\right|^2+\frac{\lambda_3}{2}|\Delta\phi|^2\right)d\Omega-C_0.
\end{align}
Then, we can obtain the corresponding gradient flow
\begin{subequations}\label{eqn:Vmodified_willmore_gradient_flow}
\begin{align}
\phi_t&=\nabla\cdot(M(\phi)\nabla\mu),\\
\mu&=\frac{u}{\sqrt{\widetilde{E}_2(\phi)}}V(\xi)\mathcal{Z}(\phi)+\frac{\lambda_1}{\epsilon^2}\phi-\lambda_2\Delta\phi+\lambda_3\Delta^2\phi,\\
u_t&=\frac{V(\xi)}{2\sqrt{\widetilde{E}_2(\phi)}}\int_{\Omega}\mathcal{Z}(\phi)\phi_t d\Omega,
\end{align}
\end{subequations}
where
\begin{align}
    \mathcal{Z}(\phi)=\frac{1}{\epsilon^2}\gamma(\mathbf{n})f(\phi)-\nabla\cdot\mathbf{m}+\beta\left(\Delta\phi-\frac{1}{\epsilon^2}f(\phi)\right)\left(\Delta\phi-\frac{1}{\epsilon^2}f'(\phi)\right)-\frac{\lambda_1}{\epsilon^2}\phi+\lambda_2\Delta\phi-\lambda_3\Delta^2\phi,
\end{align}
and $V(\xi)\in C^2(\mathbb{R})$ is a real function. Taking $L^2$ inner product of equations in \eqref{eqn:Vmodified_willmore_gradient_flow} with $\mu$, $\phi_t$ and $2u$ respectively, we can obtain
\begin{align}
    \frac{d}{dt}\mathcal{E}(u,\phi)=-\left\|\sqrt{M(\phi)}\nabla\mu\right\|^2\leq 0.
\end{align}
Then, we discretize the above modified model\eqref{eqn:Vmodified_willmore_gradient_flow} using the variable-time-step WSBDF2 method.

\begin{itemize}
    \item {\bf{$\mathcal{V}_W$-method}.} Given $\phi^n$, $u^n$ and $\phi^{n-1}$, update $\phi^{n+1}$, $u^{n+1}$ by solving
\end{itemize}
\begin{subequations}\label{eqn:VST-IMEX-WBDF_willmore}
    \begin{align}
        &\Tilde{D}_{2}^{n+\theta}\phi^{n+\theta}=\nabla\cdot(M^{*,n+\theta}\nabla\mu^{n+\theta}),\label{eqn:IMEX-VST_WBDF_willmore_1}\\
        &\mu^{n+\theta}=\frac{u^{n+1}}{\sqrt{\widetilde{E}_2^{n}}}V(\xi^{n+1})\mathcal{Z}^{*,n+\theta}+\frac{S_1}{\epsilon^2}\phi^{n+\theta}-S_2\Delta\phi^{n+\theta}+S_3\Delta^2\phi^{n+\theta},\label{eqn:IMEX-VST_WBDF_willmore_2}\\
        &\frac{u^{n+1}-u^n}{\tau_{n+1}}=\frac{V(\xi^{n+1})}{2\sqrt{\widetilde{E}_2^{n}}}\int_{\Omega}\mathcal{Z}^{*,n+\theta}\frac{\phi^{n+1}-\phi^n}{\tau_{n+1}}d\Omega.\label{eqn:IMEX-VST_WBDF_willmore_3}
    \end{align}
\end{subequations}
The following two theorems demonstrate that the structure-preserving properties still hold for the Willmore regularization system. Additionally, since the proof processes are similar to Theorem \ref{th:V_Energy_linear} and Theorem \ref{th:V_Mass_linear}, the proofs are omitted here.
\begin{theorem}
    For $\theta\in[\frac{1}{2},1]$ and $0\leq\gamma_{n+1}\leq\gamma_{*}$, if $M(\phi)$ in \eqref{eqn:willmore_gradient_flow} is a positive constant, the $\mathcal{V}_W$-method is unconditionally energy stable in the sense that
    \begin{align}
        \tilde{\mathcal{E}}_{W}^{n+1}-\tilde{\mathcal{E}}_{W}^{n}\leq 0,
    \end{align}
    where the modified energy is defined by
    \begin{align}
      \begin{aligned}\tilde{\mathcal{E}}_{W}^{n+1}=&\frac{(2\theta-1)\gamma_{n+2}^{3/2}}{2(1+\gamma_{n+2})}\frac{\|\nabla^{-1}(\phi^{n+1}-\phi^{n})\|^2}{M\tau_{n+1}}+\frac{\lambda_1}{2\epsilon^2}\|\phi^{n+1}\|^2\\
      &+\frac{\lambda_2}{2}\|\nabla\phi^{n+1}\|^2+\frac{\lambda_3}{2}\|\Delta\phi^{n+1}\|^2+|u^{n+1}|^2.
      \end{aligned}
    \end{align}
\end{theorem}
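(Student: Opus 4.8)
The plan is to mirror the proof of Theorem~\ref{th:V_Energy_linear} almost verbatim, replacing the role of the bi-Laplacian regularization term $\beta\Delta^2\phi$ by the additional stabilization $S_3\Delta^2\phi^{n+\theta}$ and adjusting the stabilizing coefficients ($\lambda_i\to S_i$). Concretely, I would set $M^{*,n+\theta}=M$ in \eqref{eqn:IMEX-VST_WBDF_willmore_1}, then test \eqref{eqn:IMEX-VST_WBDF_willmore_1} with $-\frac{1}{M}\Delta^{-1}(\phi^{n+1}-\phi^n)$ and \eqref{eqn:IMEX-VST_WBDF_willmore_2} with $\phi^{n+1}-\phi^n$. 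The first pairing produces, via the gradient-flow structure, the term $-\bigl(\mu^{n+\theta},\phi^{n+1}-\phi^n\bigr)$ balancing the $\Delta^{-1}$-weighted difference quotient of $\phi$; the second pairing expands $\mu^{n+\theta}$ into the SAV piece $\frac{u^{n+1}}{\sqrt{\widetilde E_2^n}}V(\xi^{n+1})\bigl(\mathcal{Z}^{*,n+\theta},\phi^{n+1}-\phi^n\bigr)$ plus the three stabilization inner products $\frac{S_1}{\epsilon^2}(\phi^{n+\theta},\phi^{n+1}-\phi^n)$, $-S_2(\Delta\phi^{n+\theta},\phi^{n+1}-\phi^n)=S_2(\nabla\phi^{n+\theta},\nabla(\phi^{n+1}-\phi^n))$, and $S_3(\Delta^2\phi^{n+\theta},\phi^{n+1}-\phi^n)=S_3(\Delta\phi^{n+\theta},\Delta(\phi^{n+1}-\phi^n))$.

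Next I would multiply \eqref{eqn:IMEX-VST_WBDF_willmore_3} by $2u^{n+1}$, which yields exactly the SAV cross-term (with opposite sign) appearing in the expansion of $(\mu^{n+\theta},\phi^{n+1}-\phi^n)$, so that when the three tested equations are summed the SAV contributions cancel completely and leave $2u^{n+1}(u^{n+1}-u^n)$. Then I apply the key inequality \eqref{eqn:inequality} from Lemma~1 (with $\phi$ replaced by $\nabla^{-1}\phi$ up to the factor $1/M$, exactly as in the proof of Theorem~\ref{th:V_Energy_linear}) to handle the leading difference-quotient term, and the two elementary identities $2(\theta a^{k+1}+(1-\theta)a^k)(a^{k+1}-a^k)=|a^{k+1}|^2-|a^k|^2+(2\theta-1)|a^{k+1}-a^k|^2$ and $2a^{k+1}(a^{k+1}-a^k)=|a^{k+1}|^2-|a^k|^2+|a^{k+1}-a^k|^2$ to telescope the $S_1,S_2,S_3$ stabilization terms and the $u$-term respectively. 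Collecting everything gives $\tilde{\mathcal{E}}_W^{n+1}-\tilde{\mathcal{E}}_W^n$ plus the nonnegative remainder $R(\gamma_{n+1},\gamma_{n+2})\frac{\|\phi^{n+1}-\phi^n\|^2}{2M\tau_{n+1}}+\frac{S_1(2\theta-1)}{2\epsilon^2}\|\phi^{k}-\phi^{k-1}\|^2+\frac{S_2(2\theta-1)}{2}\|\nabla\phi^{k}-\nabla\phi^{k-1}\|^2+\frac{S_3(2\theta-1)}{2}\|\Delta\phi^{k}-\Delta\phi^{k-1}\|^2+|u^{k+1}-u^k|^2\le 0$; dropping the manifestly nonnegative terms (using $R\ge0$ from the condition $0\le\gamma_{n+1}\le\gamma_*$ and $\theta\ge\frac12$) gives the claim.

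The only genuinely delicate point, and the one I would write out carefully, is the treatment of the leading term: applying \eqref{eqn:inequality} requires writing $\bigl(\Tilde D_2^{n+\theta}\phi^{n+\theta},-\frac1M\Delta^{-1}(\phi^{n+1}-\phi^n)\bigr)$ as $\frac1M\bigl(\Tilde D_2^{n+\theta}(\nabla^{-1}\phi)^{n+\theta},(\nabla^{-1}\phi)^{n+1}-(\nabla^{-1}\phi)^{n}\bigr)$, which is legitimate since $-\Delta^{-1}=\nabla^{-1}(\nabla^{-1})^*$ with periodic (or no-flux, mean-zero) boundary conditions and $\Tilde D_2^{n+\theta}$ commutes with $\nabla^{-1}$; this is exactly where the hypothesis that $M$ is a positive \emph{constant} is used, since otherwise the mobility cannot be pulled out of the inner product. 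The rest is bookkeeping identical to the linear case, so I would simply state "the proof is analogous to that of Theorem~\ref{th:V_Energy_linear}, with $\beta\Delta^2$ replaced by the stabilization $S_3\Delta^2$ and $\lambda_i$ replaced by $S_i$" and omit the repetitive computation, exactly as the authors do.
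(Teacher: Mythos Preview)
Your proposal is correct and follows exactly the approach the paper intends: the authors explicitly omit the proof, stating it is similar to that of Theorem~\ref{th:V_Energy_linear}, and your outline reproduces that argument step for step with the appropriate substitutions (the $\beta\Delta^2$ contribution is absorbed into the SAV variable and replaced by the $S_3\Delta^2$ stabilization, yielding the extra $\frac{\lambda_3}{2}\|\Delta\phi^{n+1}\|^2$ term in the discrete energy). Your remark on why the constant-mobility hypothesis is essential for passing $\nabla^{-1}$ through the inner product is also precisely the point behind the paper's restriction on $M(\phi)$.
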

\begin{theorem}
         The solution of the $\mathcal{V}_L$-method \eqref{eqn:VST-IMEX-WBDF_willmore} satisfies the mass conservation.
     \end{theorem}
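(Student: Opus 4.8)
The final statement to prove is the mass conservation for the $\mathcal{V}_W$-method applied to the Willmore regularization model on a nonuniform mesh. The plan is to mirror exactly the argument used for Theorem~\ref{th:V_Mass_linear}, since the mass-conservation mechanism depends only on the discrete time-derivative operator $\Tilde{D}_{2}^{n+\theta}$ and the divergence structure of the right-hand side, neither of which changes between the linear and Willmore regularizations.

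First I would take the $L^2$ inner product of the $\phi$-equation \eqref{eqn:IMEX-VST_WBDF_willmore_1} with the constant function $1$. Using periodic (or no-flux) boundary conditions and integration by parts, the right-hand side $\bigl(\nabla\cdot(M^{*,n+\theta}\nabla\mu^{n+\theta}),1\bigr)$ vanishes identically. On the left, unpacking the definition \eqref{eqn:VST_WBDF2} of $\Tilde{D}_{2}^{n+\theta}$ in terms of $\nabla_\tau\phi^{n+1}=\phi^{n+1}-\phi^n$ and $\nabla_\tau\phi^{n}=\phi^n-\phi^{n-1}$, and integrating over $\Omega$, yields a linear three-term recurrence relating $\int_\Omega \phi^{n+1}\,\rd\Omega$, $\int_\Omega \phi^{n}\,\rd\Omega$ and $\int_\Omega \phi^{n-1}\,\rd\Omega$ with coefficients depending only on $\theta$ and $\gamma_{n+1}$ — the same identity displayed in the proof of Theorem~\ref{th:V_Mass_linear}.

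Next I would close the argument by induction on $n$. The recurrence has the form $\int_\Omega \phi^{n+1}\,\rd\Omega = (1+c_n)\int_\Omega\phi^n\,\rd\Omega - c_n\int_\Omega\phi^{n-1}\,\rd\Omega$ with $c_n=\tfrac{(1-2\theta)\gamma_n^2}{1+2\theta\gamma_n}$, so if $\int_\Omega\phi^{n}\,\rd\Omega=\int_\Omega\phi^{n-1}\,\rd\Omega$ then the two terms telescope and $\int_\Omega\phi^{n+1}\,\rd\Omega$ equals the common value. The base case $\int_\Omega\phi^1\,\rd\Omega=\int_\Omega\phi^0\,\rd\Omega$ is supplied by the first-step BDF1 initialization, exactly as in the remark following Theorem~\ref{th:U_Mass_linear}. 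Chaining the equalities gives $\int_\Omega\phi^{n+1}\,\rd\Omega=\cdots=\int_\Omega\phi^0\,\rd\Omega$.

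There is essentially no obstacle here: the only point requiring any care is confirming that the stabilization terms $\tfrac{S_1}{\epsilon^2}\phi^{n+\theta}$, $-S_2\Delta\phi^{n+\theta}$, $S_3\Delta^2\phi^{n+\theta}$ and the SAV coupling term $\tfrac{u^{n+1}}{\sqrt{\widetilde E_2^n}}V(\xi^{n+1})\mathcal{Z}^{*,n+\theta}$ in \eqref{eqn:IMEX-VST_WBDF_willmore_2} never enter the mass balance — which is automatic because $\mu^{n+\theta}$ only appears inside $\nabla\cdot(M^{*,n+\theta}\nabla(\cdot))$, so all of $\mu^{n+\theta}$ is annihilated upon testing against $1$. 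Hence the proof is a verbatim copy of that of Theorem~\ref{th:V_Mass_linear} with $\widetilde E_1$ replaced by $\widetilde E_2$ and $\mathcal{H}$ by $\mathcal{Z}$, which is why the authors omit it.
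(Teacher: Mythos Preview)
Your proposal is correct and follows exactly the approach the paper intends: the authors explicitly omit this proof as being ``similar to Theorem~\ref{th:V_Mass_linear},'' and you have faithfully reproduced that argument, correctly noting that only the divergence structure of \eqref{eqn:IMEX-VST_WBDF_willmore_1} and the coefficients of $\Tilde{D}_{2}^{n+\theta}$ matter, so the Willmore-specific terms in $\mu^{n+\theta}$ never enter the mass balance.
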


\begin{rem}
    Taking $\gamma_{n+1}=1$ in \eqref{eqn:VTS-WBDF2-Linear} and \eqref{eqn:VST-IMEX-WBDF_willmore}, the $\mathcal{V}_L$-method and $\mathcal{V}_M$-method can degenerate into their corresponding $\mathcal{U}_L$-method and $\mathcal{U}_W$-method on the uniform temporal mesh.
     However, the technique for constructing discrete energy in this section requires that $M(\phi)$ be a positive constant or a time-dependent positive function. However, the $\mathcal{U}_L$-method and $\mathcal{U}_M$-method we propose can maintain unconditional energy stability for any $M(\phi)$.
\end{rem}
\begin{rem}
     Introducing the second order stabilization term, $\phi^{n+1}-(1+\gamma_{n+1})\phi^n+\gamma_{n+1}\phi^{n-1}$,in the $\mathcal{V}_L$-method and $\mathcal{V}_W$-method, similar to the technique used in the $\mathcal{U}_L$-method and $\mathcal{U}_W$-method, results in our inability to construct discrete energy formulations that preserve unconditional energy stability. Additionally, If we choose to add second-order stabilization terms, such as $\tau(\phi^{n+1}-\phi^n)$ and $\tau^2\phi^{n+1}$, which are easy to construct discrete energy formulations that maintain unconditional energy stability, results in poor stability in practical computation. Therefore, we employ a different technique for adding stabilization terms.
\end{rem}

\section{Numerical simulations}\label{section_5}

In this section, we provide several numerical examples to verify the accuracy, mass conservation and energy dissipation of the proposed schemes.
In all the tests, we discretize space using the Fourier spectral method \cite{shen2011spectral,ainsworth2017analysis,chen2018enriched}.

In the absence of explicit specifications, the parameters for the numerical experiments are set as follows
\begin{align}\label{eqn:parameters}
    M(\phi)=1, \epsilon=0.2,\beta=6e-4,S_1=4,S_2=4,\lambda_1=0,\lambda_2=4.
\end{align}
\subsection{Numerical simulations in 1D}
In this subsection, we perform numerical simulations, include accuracy and structure-preservation of the solutions, for the linear regularization model with $\mathcal{U}_L$-method \eqref{eqn:Uniform-WBDF2-Linear} and $\mathcal{V}_L$-method \eqref{eqn:VTS-WBDF2-Linear} in 1D. Here, the computational domain is defined as $\Omega=[0,2\pi]$, with the mesh size $N_x=128$.

\textbf{Example 1.}
In this example, we aim to test the convergence rates of $\mathcal{U}_L$-method and $\mathcal{V}_L$-method in various parameter settings. To this end, we add a source term in the original model, determined by the exact solution
\begin{align}\label{eqn:initial-condition-1}
    \phi(x,t)=(t+1)^3sin(x).
\end{align}
Firstly, we plot the $L^2$-norm errors of the $\mathcal{U}_L$-method in Figure \ref{figure:error_uniform}.
We can observe from
Figure \ref{figure:error_uniform} that the error curves of the schemes under $S_1=0,S_2=0$ and $S_1=4,S_2=0$ are incomplete, which means that the corresponding schemes are unstable and their solutions may exhibit explosive growth during computation. In addition, the solutions under $S_1=0,S_2=4$ show better convergence than those under $S_1=4,S_2=4$ with small time steps due to the introduction of additional error terms when $S_1=4$. However, the solutions under $S_1=4,S_2=4$ consistently maintain good convergence, even with larger time steps, while those under $S_1=0,S_2=4$ do not.
From Figure \ref{figure:error_uniform}, we can also see that as the anisotropy intensity increases, the effect becomes more pronounced.
The $\mathcal{V}_L$-method on temporal mesh shows similar numerical results as the $\mathcal{U}_L$-method, see Figure \ref{figure:error_nonuniform_stronganiso}.  From the tests in this example, we conclude that the stabilization terms dramatically influence the convergence of the numerical schemes.

\par
\begin{figure}[!htp]
\centering
\includegraphics[width=0.32\textwidth]{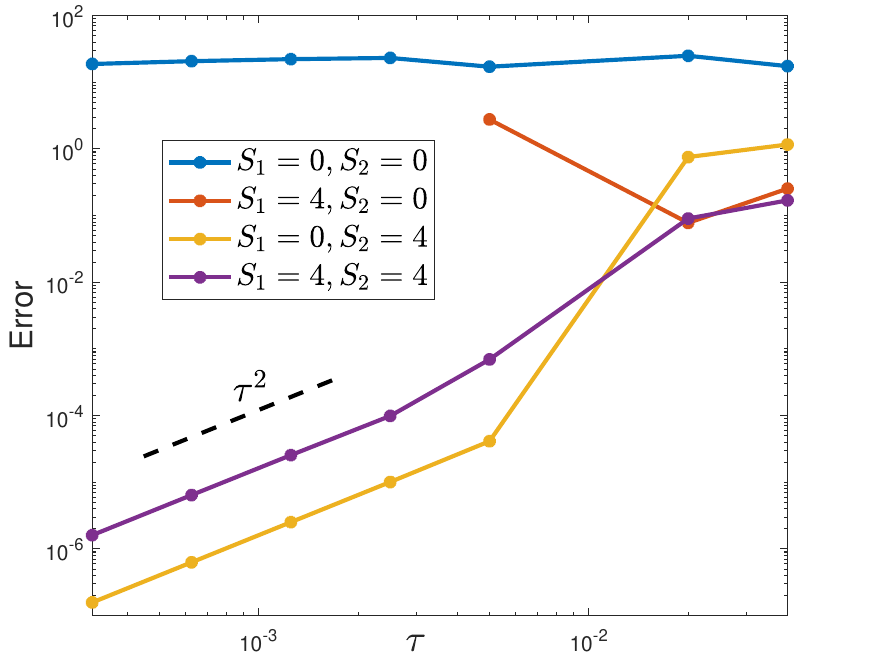}
\includegraphics[width=0.32\textwidth]{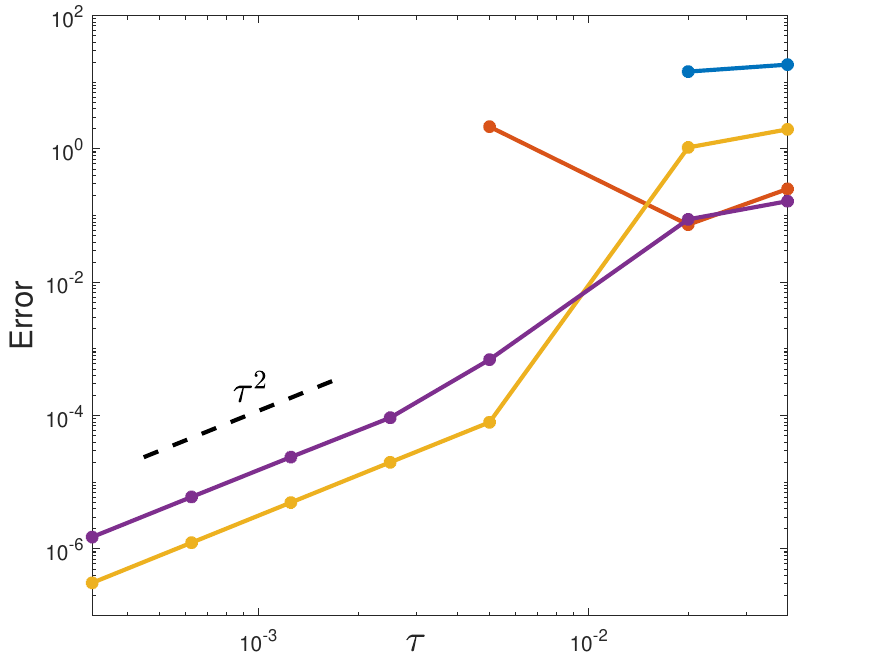}
\includegraphics[width=0.32\textwidth]{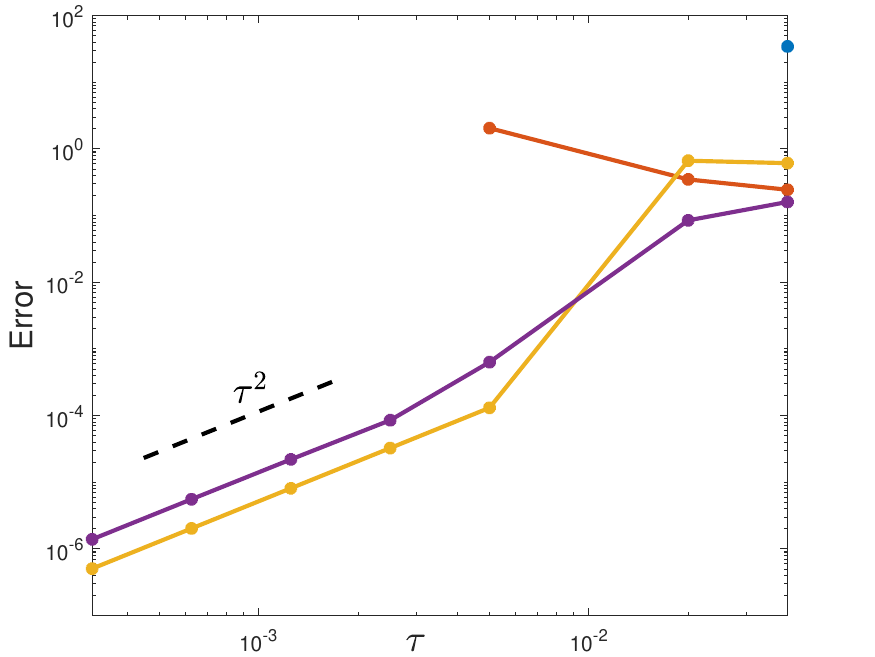}
\par
\centering
\includegraphics[width=0.32\textwidth]{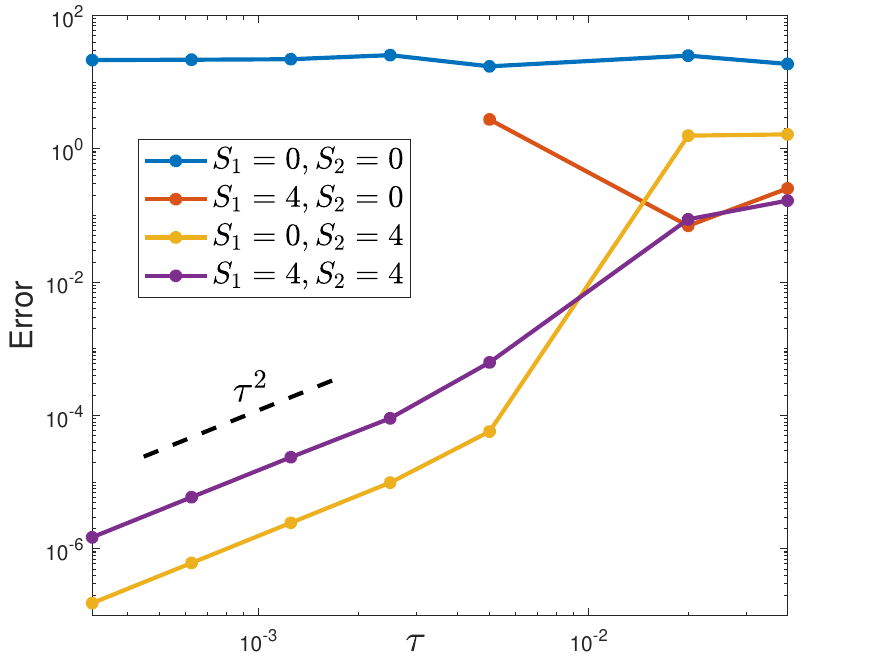}
\includegraphics[width=0.32\textwidth]{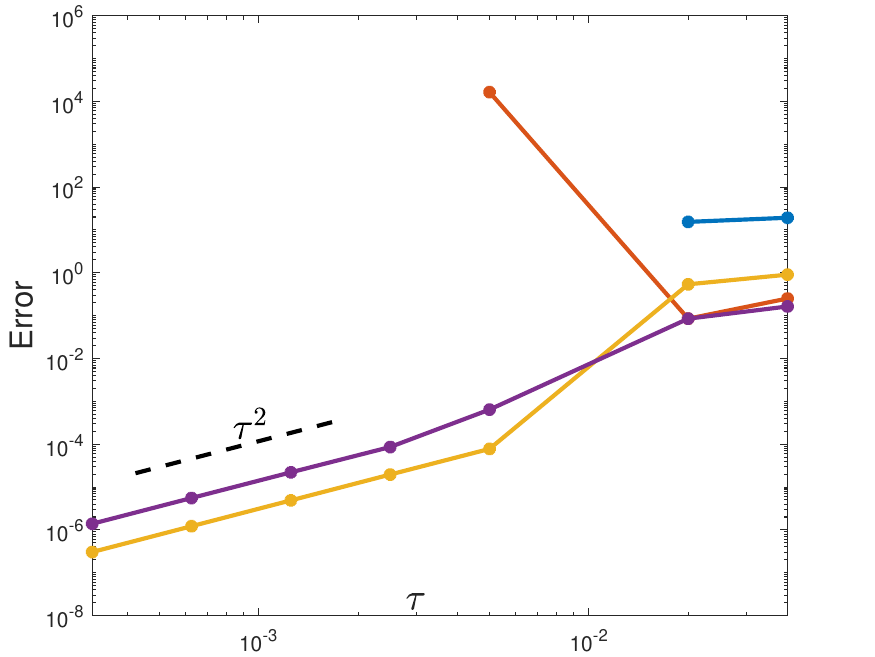}
\includegraphics[width=0.32\textwidth]{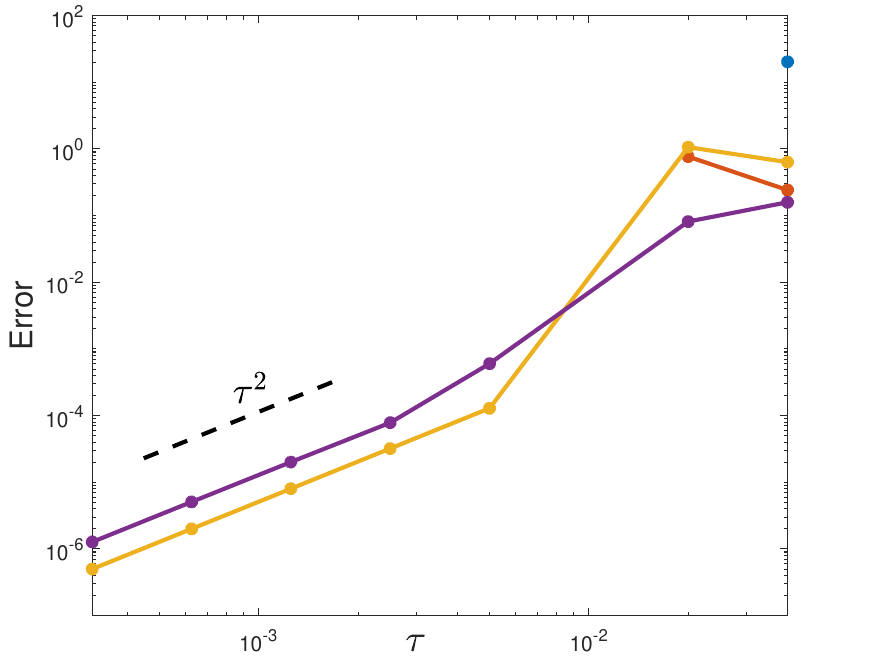}
\par
\centering
\includegraphics[width=0.32\textwidth]{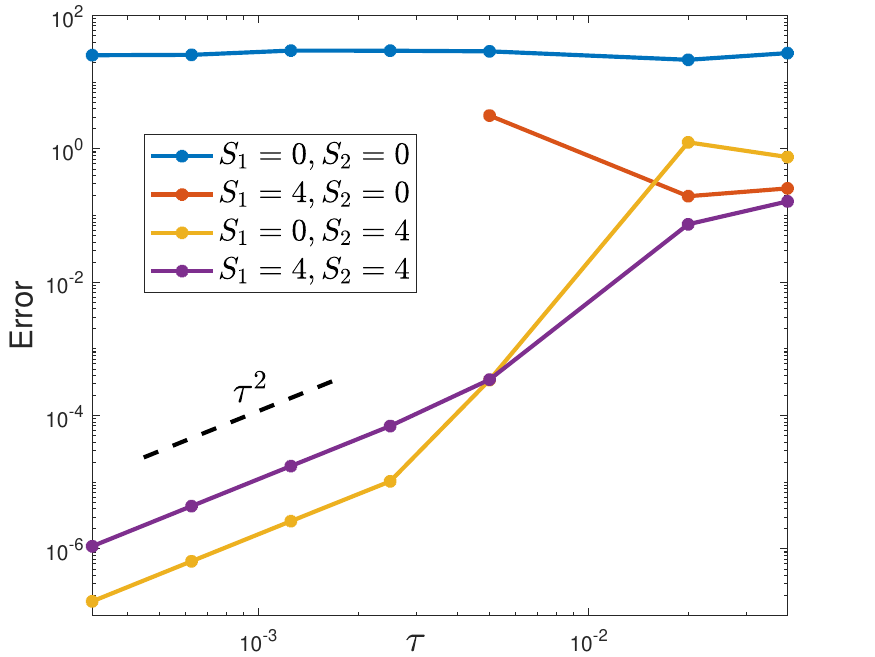}
\includegraphics[width=0.32\textwidth]{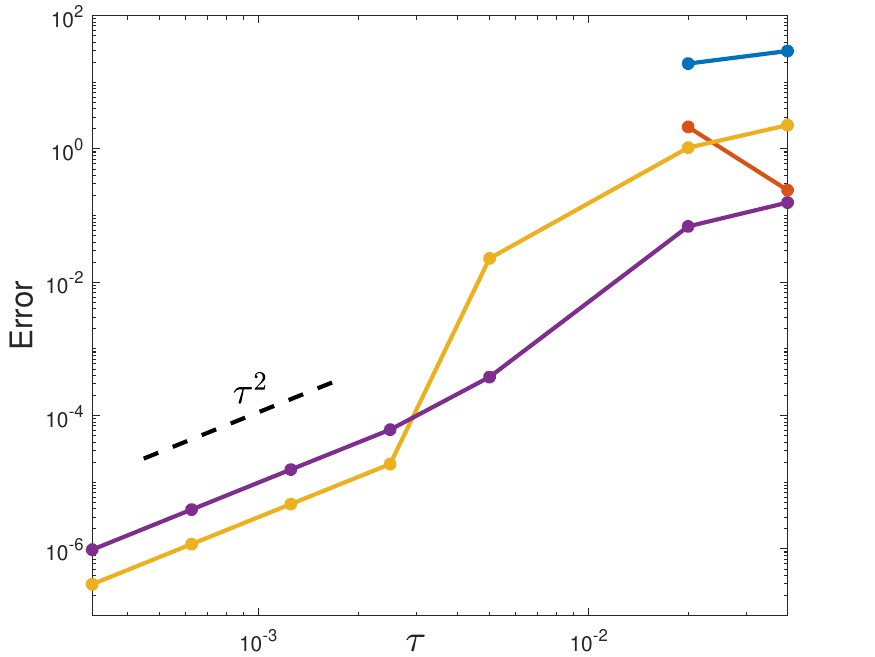}
\includegraphics[width=0.32\textwidth]{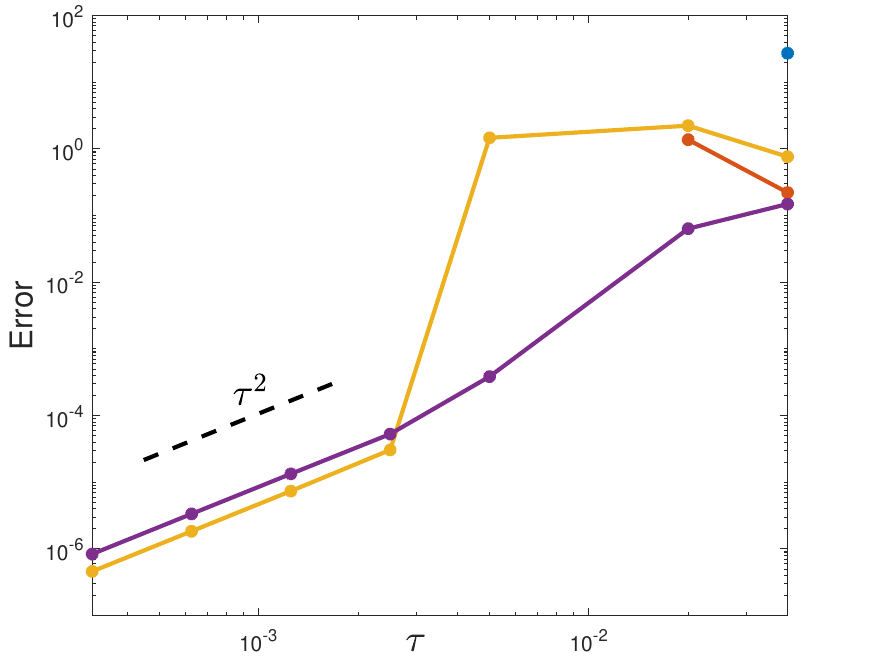}
\caption{$L^2$-norm errors for the uniform-time-step method with different $\theta$:
$\theta=0.5$ (first column), $\theta=0.75$ (second column), $\theta=1$ (last column), and with different $\alpha$:
$\alpha = 0$ (first row), $\alpha=0.05$ (second row) and $\alpha = 0.3$ (last row). The other parameters are selected by \eqref{eqn:parameters}.}
\label{figure:error_uniform}
\end{figure}

     \begin{figure}[!htp]
\centering
\includegraphics[width=0.32\textwidth]{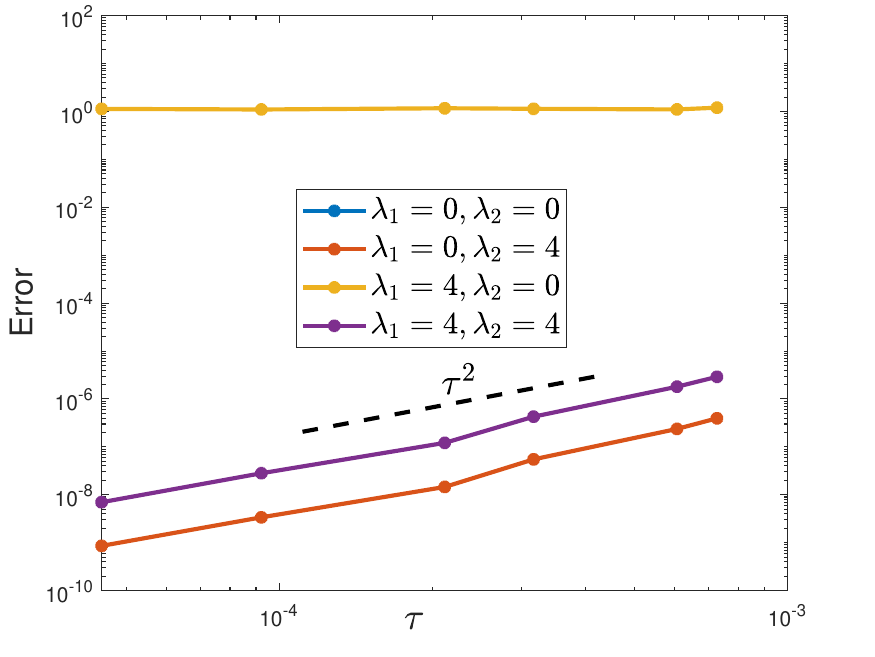}
\includegraphics[width=0.32\textwidth]{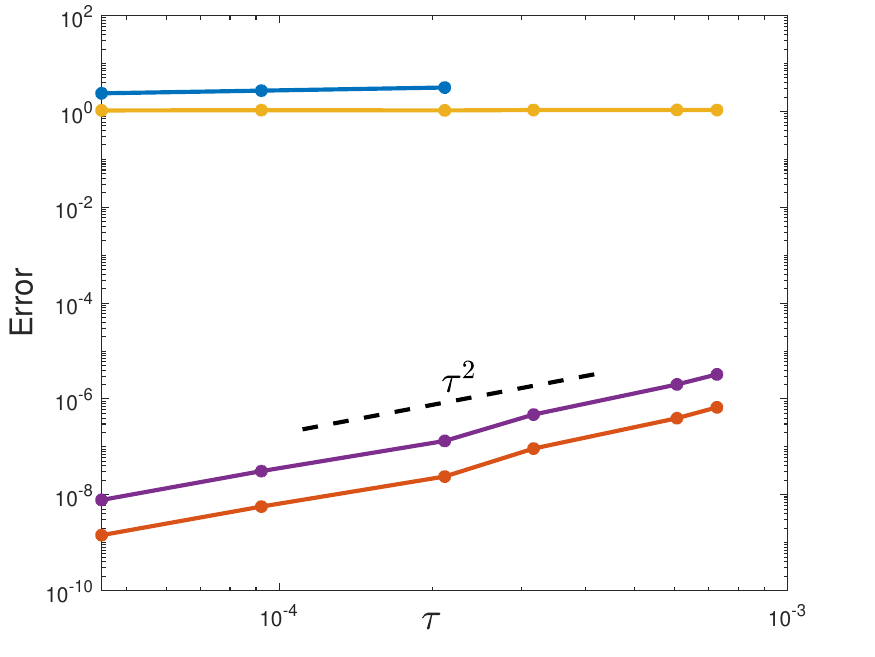}
\includegraphics[width=0.32\textwidth]{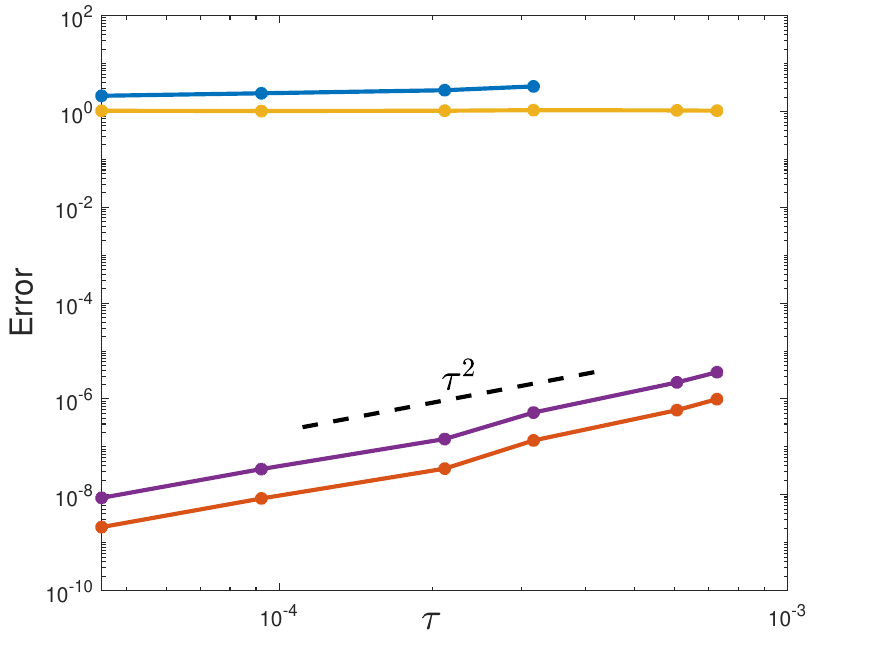}
\par
\centering
\includegraphics[width=0.32\textwidth]{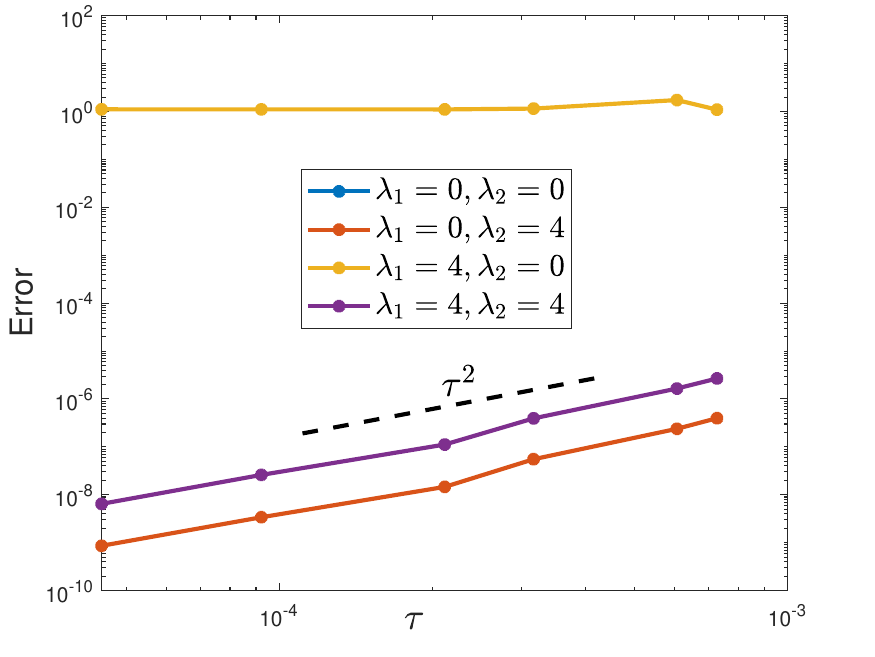}
\includegraphics[width=0.32\textwidth]{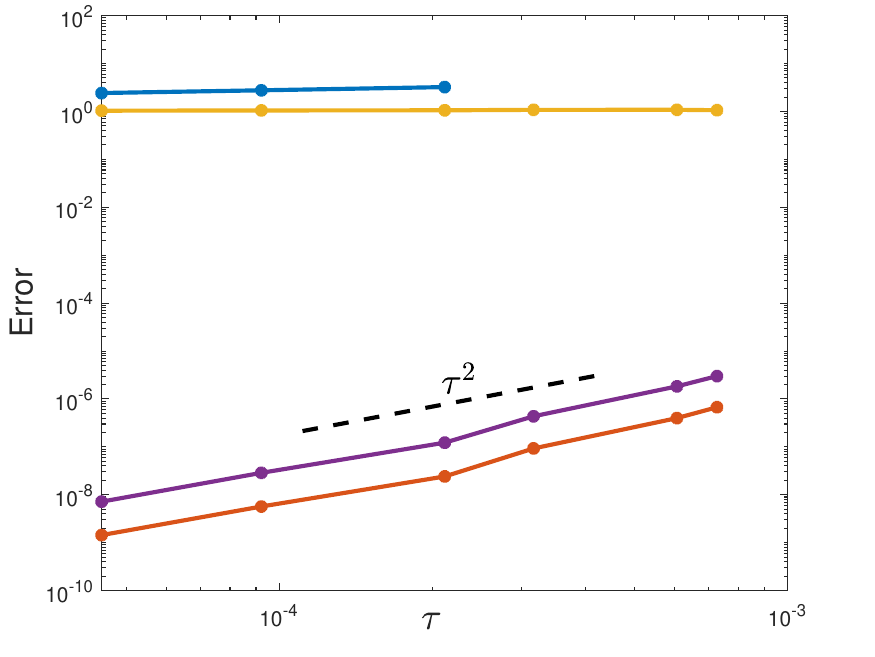}
\includegraphics[width=0.32\textwidth]{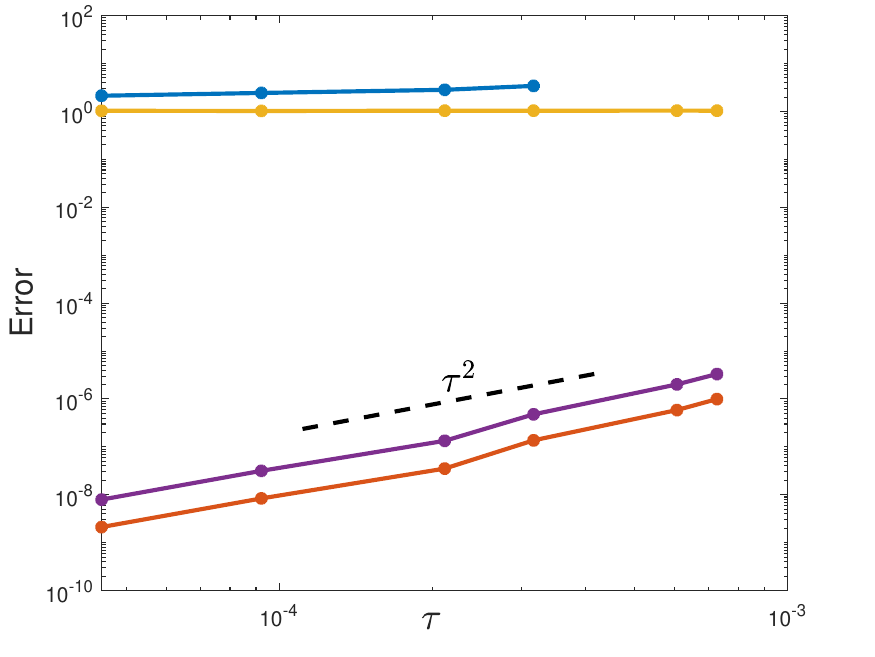}
\par
\centering
\includegraphics[width=0.32\textwidth]{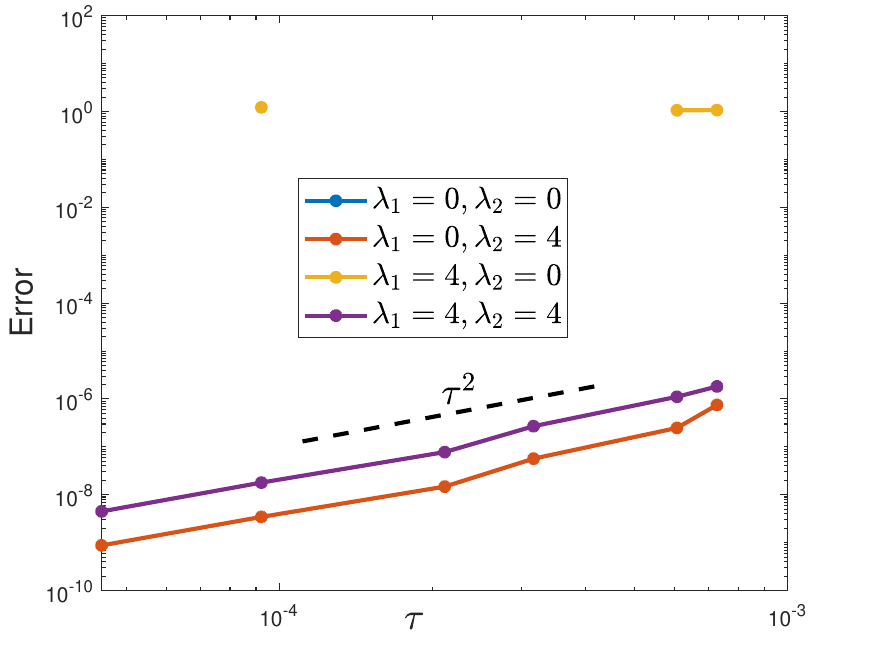}
\includegraphics[width=0.32\textwidth]{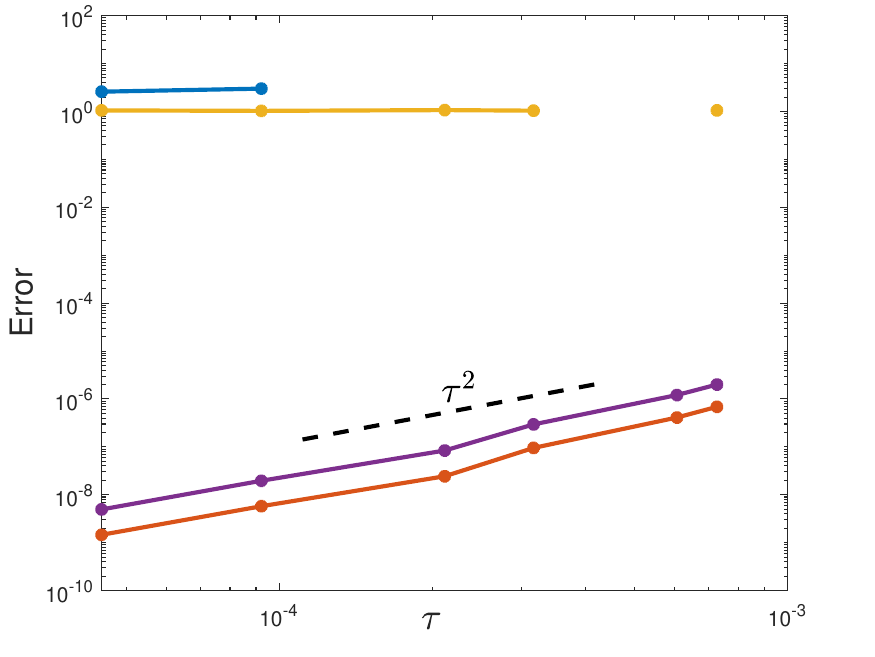}
\includegraphics[width=0.32\textwidth]{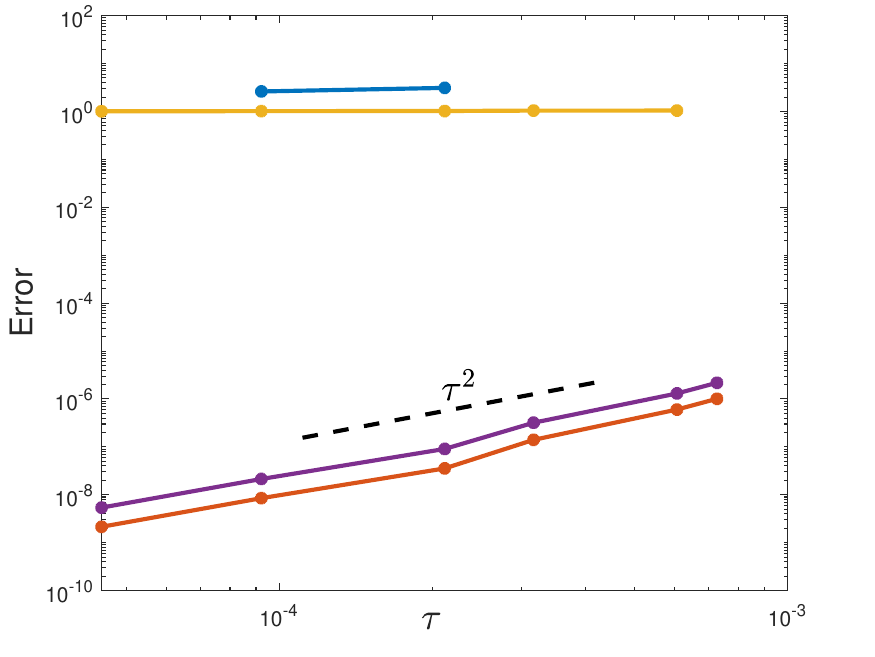}
\caption{$L^2$-norm errors for the variable-time-step method with different $\theta$:
$\theta=0.5$ (first column), $\theta=0.75$ (second column), $\theta=1$ (last column), and with different $\alpha$:
$\alpha = 0$ (first row), $\alpha=0.05$ (second row) and $\alpha = 0.3$ (last row). The other parameters are selected by \eqref{eqn:parameters}.}
\label{figure:error_nonuniform_stronganiso}
\end{figure}


\begin{figure}[!htp]
\centering
\includegraphics[width=0.32\textwidth]{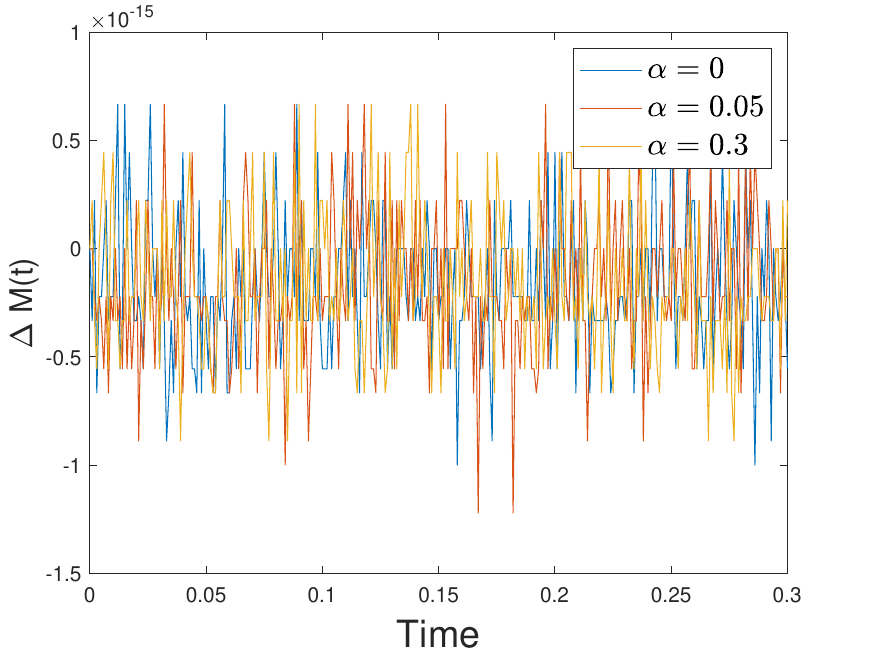}
\includegraphics[width=0.32\textwidth]{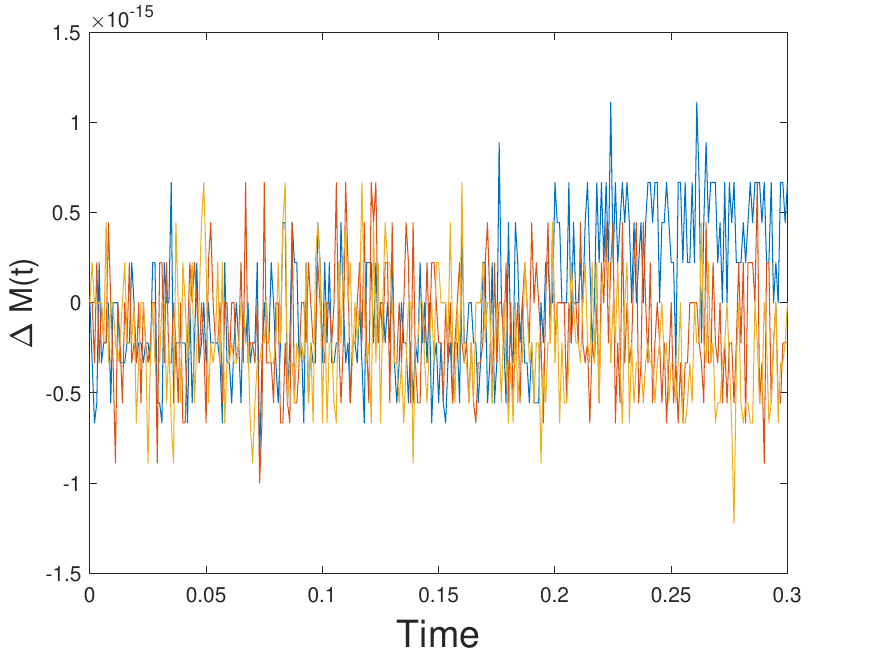}
\includegraphics[width=0.32\textwidth]{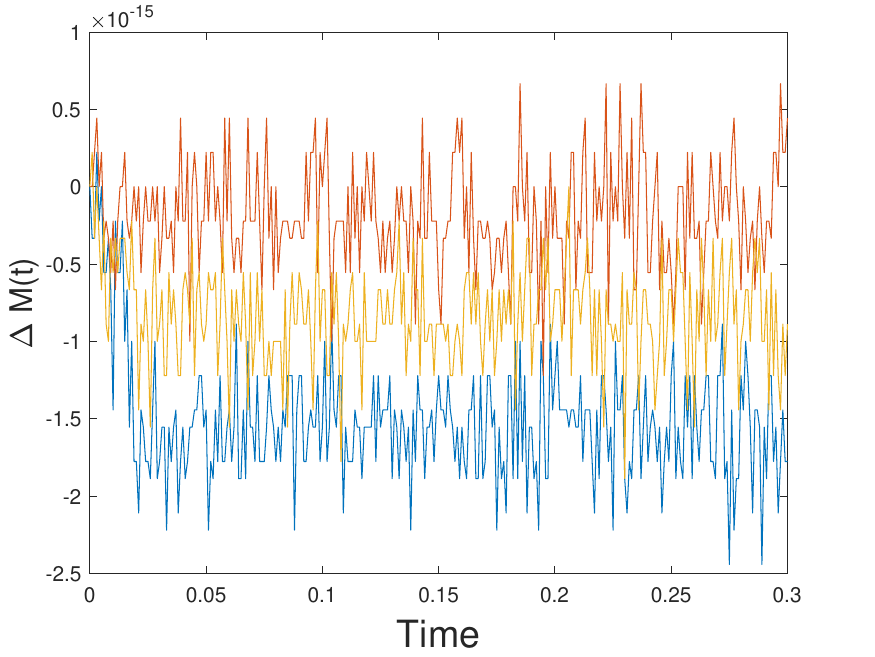}
\textit{(a) The relative errors of mass.}
\vspace{\baselineskip} 
\par
\includegraphics[width=0.32\textwidth]{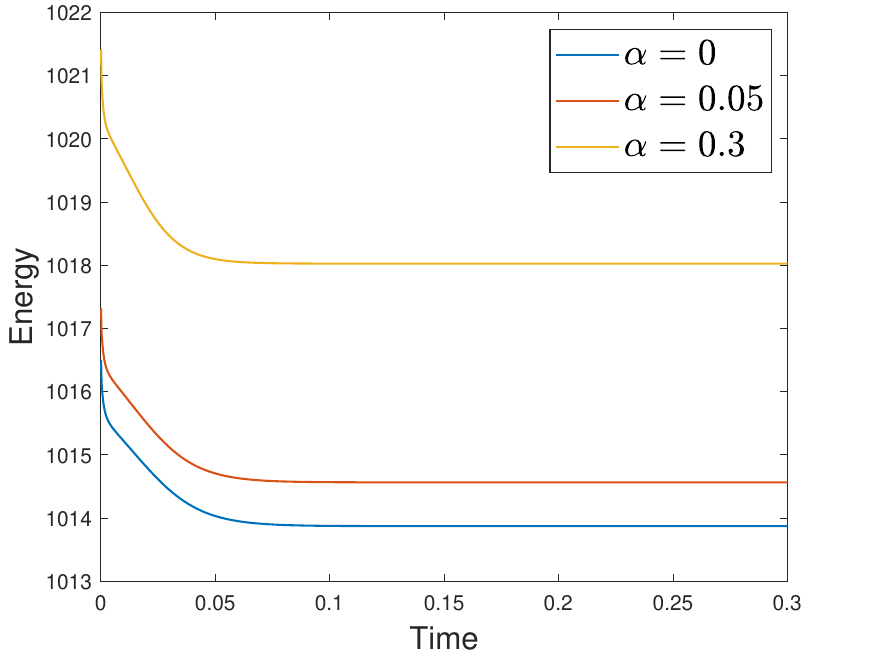}
\includegraphics[width=0.32\textwidth]{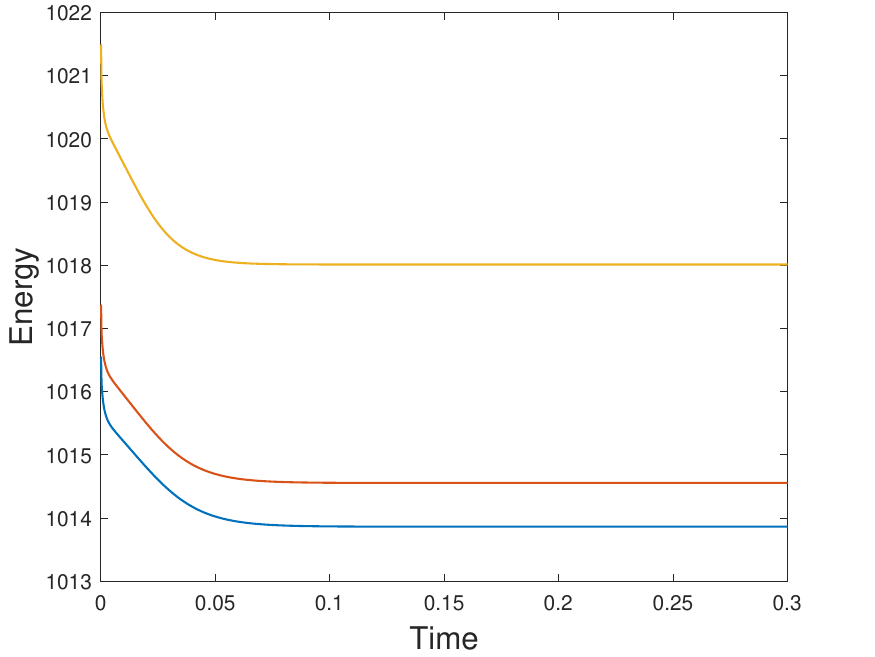}
\includegraphics[width=0.32\textwidth]{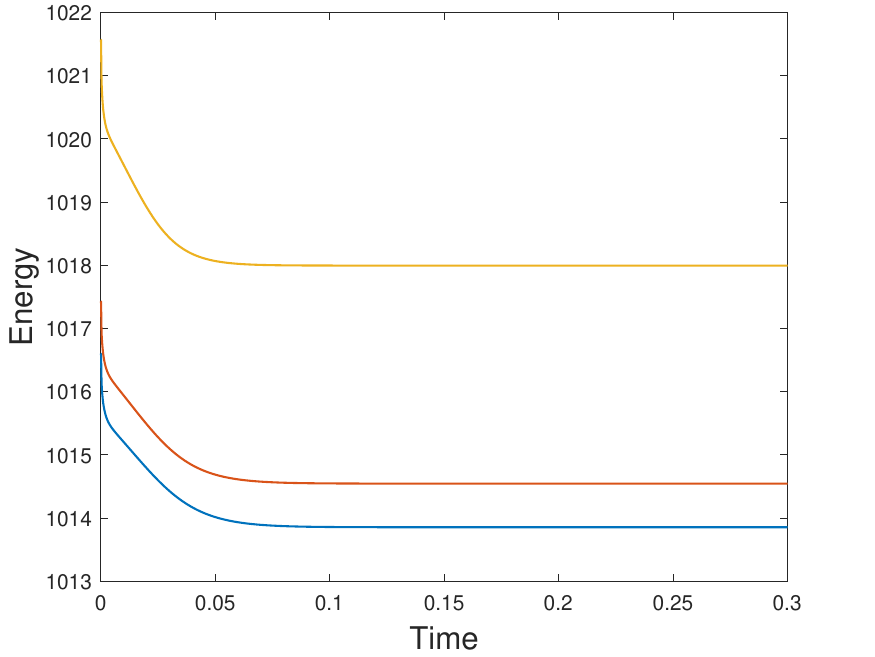}
\textit{(b) The energy evolutions.}
\caption{The relative error of mass and the energy evolutions for $\mathcal{U}_L$-method with different $\theta$:
$\theta=0.5$ (first column), $\theta=0.75$ (second column), $\theta=1$ (last column). The initial condition is chosen as \eqref{eqn:initial-condition_2_1} and the other parameters are selected by \eqref{eqn:parameters}. (a) The relative error of mass with $\tau=1e-3$. (b) The modified energy \eqref{eqn:UE_linear} with $\tau=1e-3$.}
\label{fig:ME_uniform_sinx1D}
\end{figure}

\begin{figure}[!htp]
\centering
\includegraphics[width=0.32\textwidth]{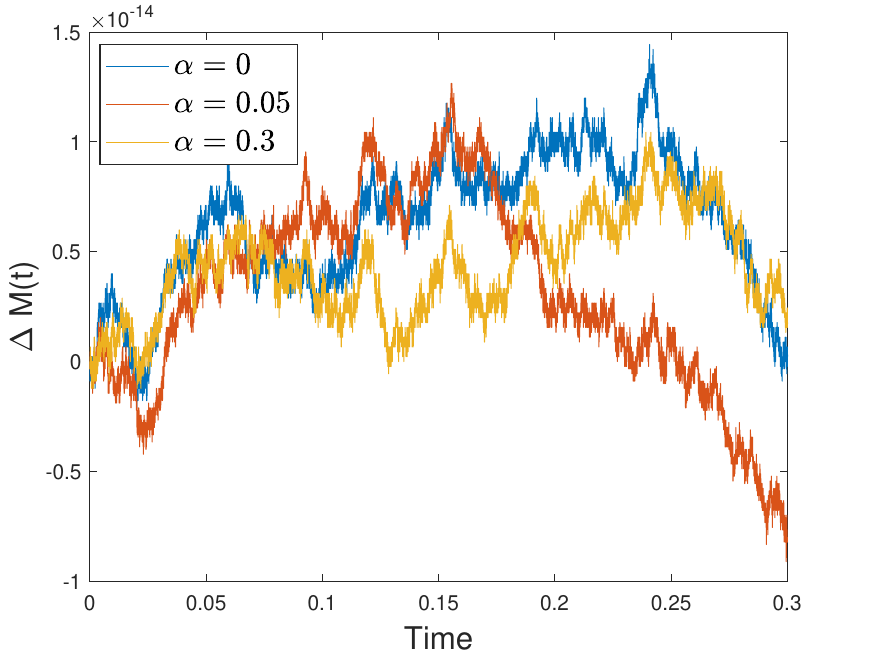}
\includegraphics[width=0.32\textwidth]{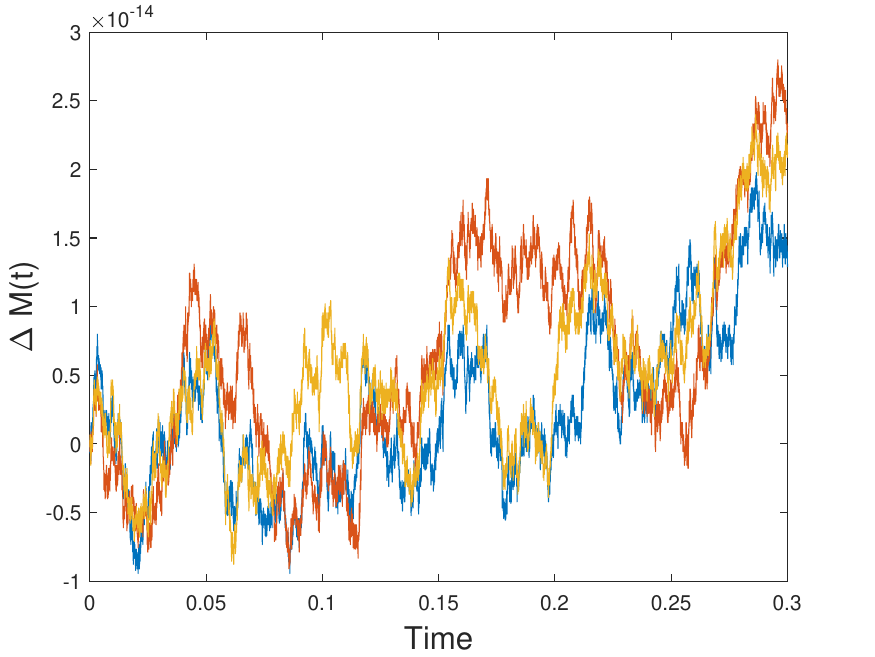}
\includegraphics[width=0.32\textwidth]{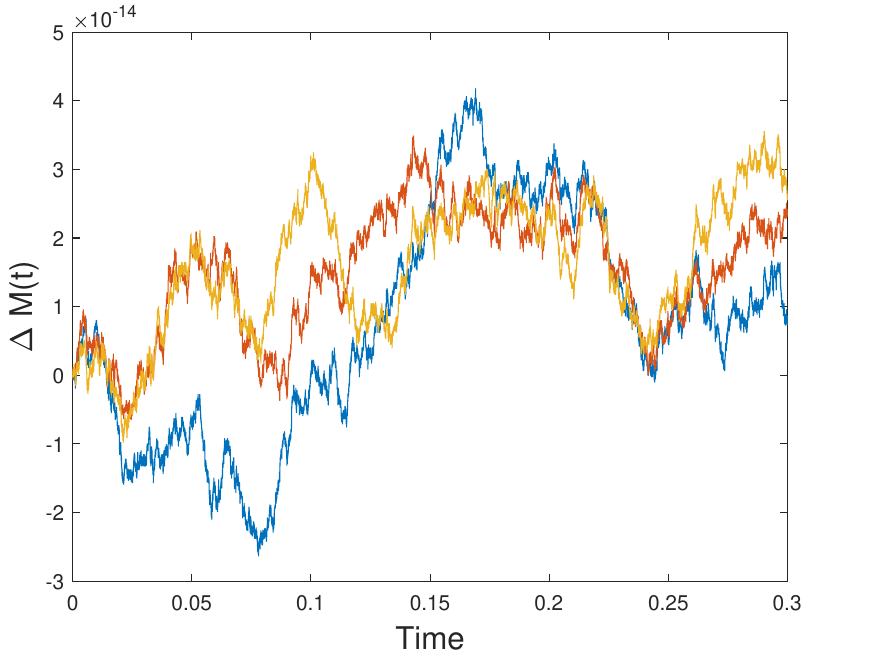}
\textit{(a) The relative errors of mass.}
\vspace{\baselineskip} 
\par
\includegraphics[width=0.32\textwidth]{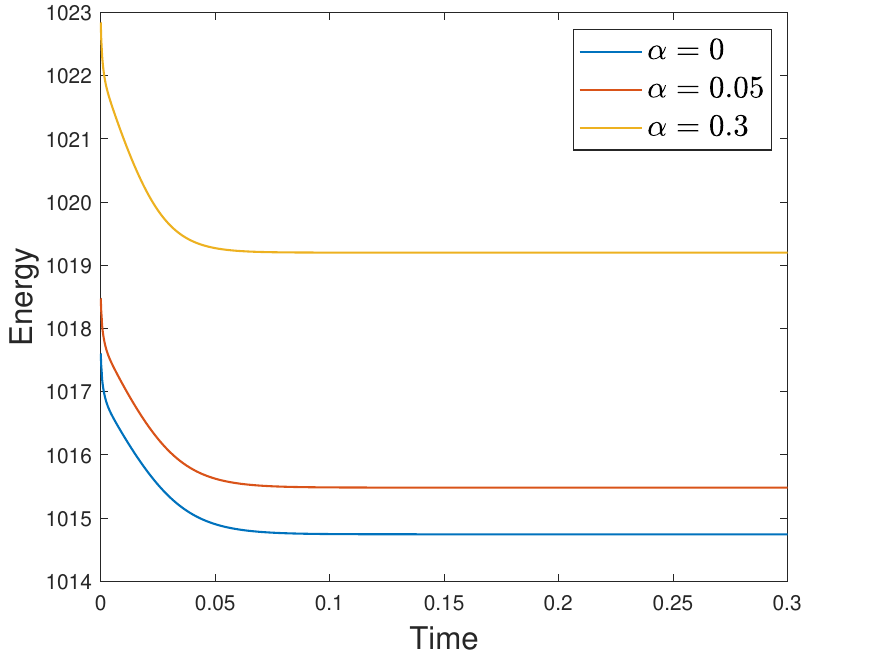}
\includegraphics[width=0.32\textwidth]{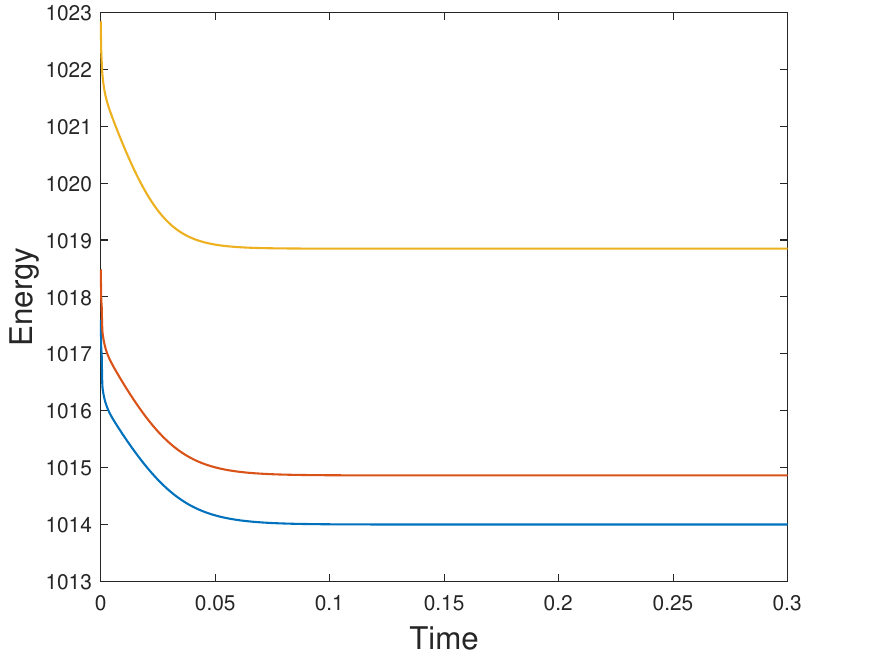}
\includegraphics[width=0.32\textwidth]{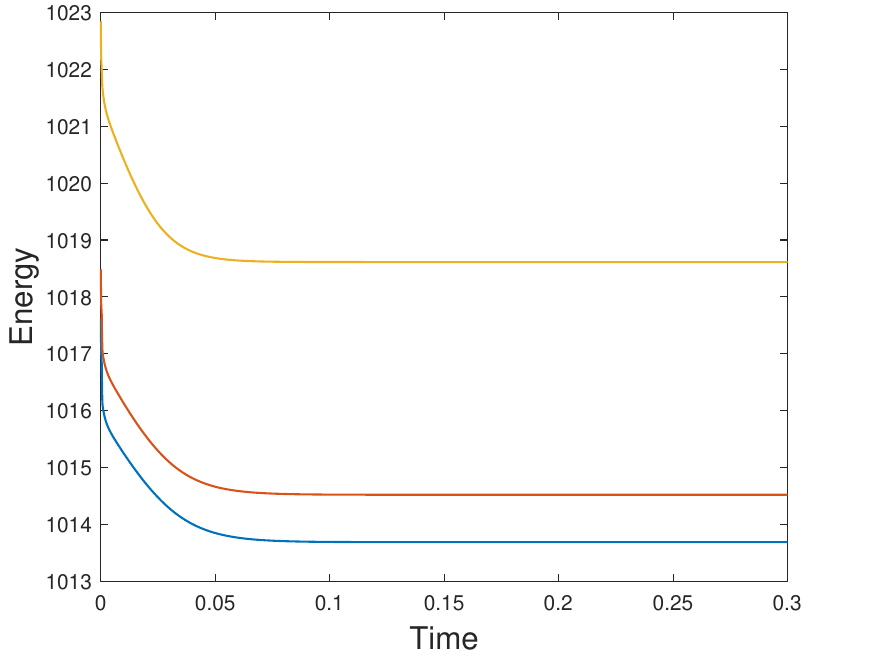}
\textit{(b) The energy evolutions.}
\caption{The relative error of mass and the energy evolutions for $\mathcal{V}_L$-method with different $\theta$:
$\theta=0.5$ (first column), $\theta=0.75$ (second column), $\theta=1$ (last column). The initial condition is chosen as \eqref{eqn:initial-condition_2_1} and the other parameters are selected by \eqref{eqn:parameters}. (a) The relative error of mass with $\tau_{max}=1.0165e-4$. (b) The modified energy \eqref{eqn:VE_linear} with $\tau_{max}=1.0165e-4$.}
\label{fig:ME_VTS_sinx1D}
\end{figure}
\textbf{Example 2.}
In this example, we present the evolutions of the relative mass error defined by $\Delta M(t)=\frac{M(t)-M(0)}{M(0)}$ and the evolutions of the energy function for different values of $\alpha$ and $\theta$.
We consider the following two types of initial conditions:
\begin{align}
  & \qquad   \phi(x,0)=|sin(x)|\,,\label{eqn:initial-condition_2_1}\\
  & \qquad   \phi(x,0)=-0.3 + 0.001rand(x)\,.\label{eqn:initial-condition_2_2}
\end{align}
For various values of the anisotropy intensity parameter $\alpha$ and the weighted parameter $\theta$,
the relative errors of mass and the energy for the  $\mathcal{U}_L$-method and $\mathcal{V}_L$-method are plotted in
Figures \ref{fig:ME_uniform_sinx1D}, \ref{fig:ME_VTS_sinx1D}, \ref{fig:ME_uniform_rand1D} and \ref{fig:ME_VTS_rand1D}.
We can conclude that both $\mathcal{U}_L$-method and $\mathcal{V}_L$-method can preserve mass conservation and energy dissipation very well, which are consistent with our theoretical analysis.

\begin{figure}[!htp]
\centering
\includegraphics[width=0.32\textwidth]{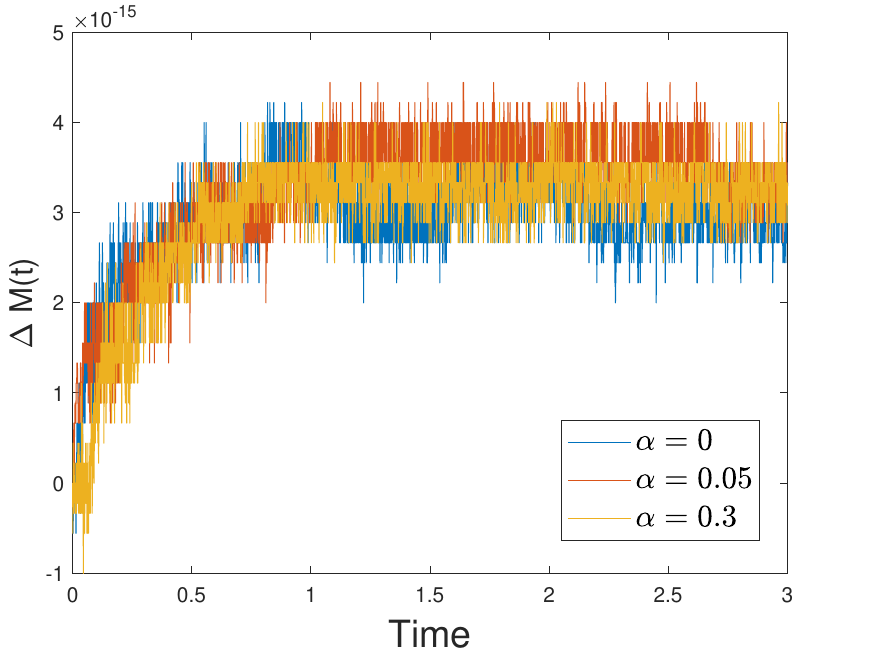}
\includegraphics[width=0.32\textwidth]{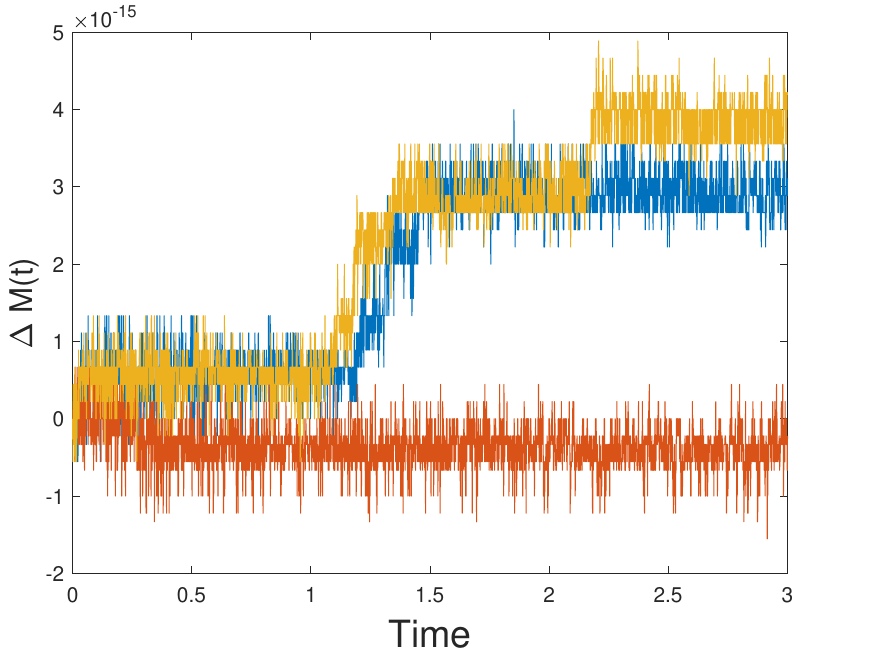}
\includegraphics[width=0.32\textwidth]{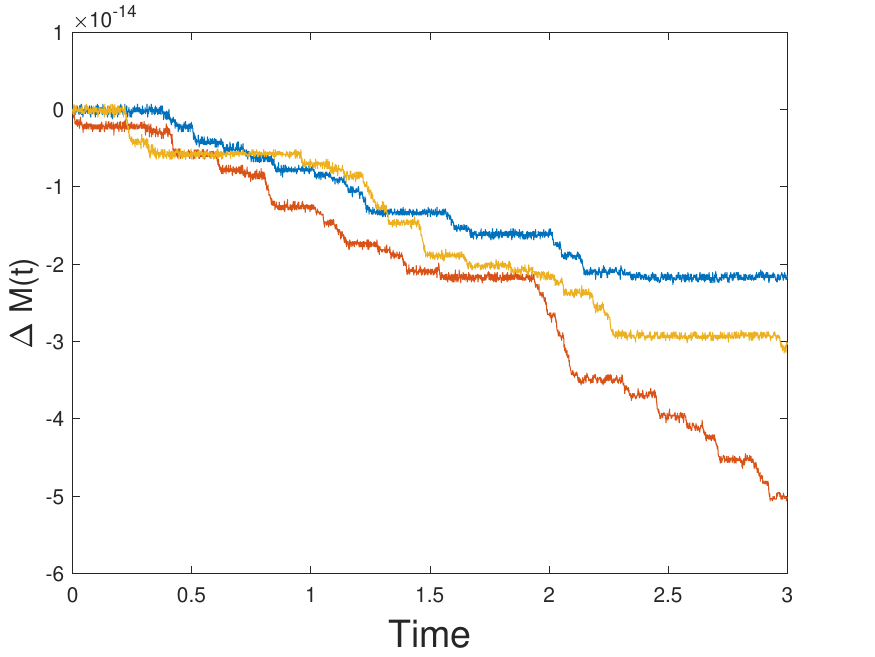}
\textit{(a) The relative error of mass.}
\vspace{\baselineskip} 
\par
\includegraphics[width=0.32\textwidth]{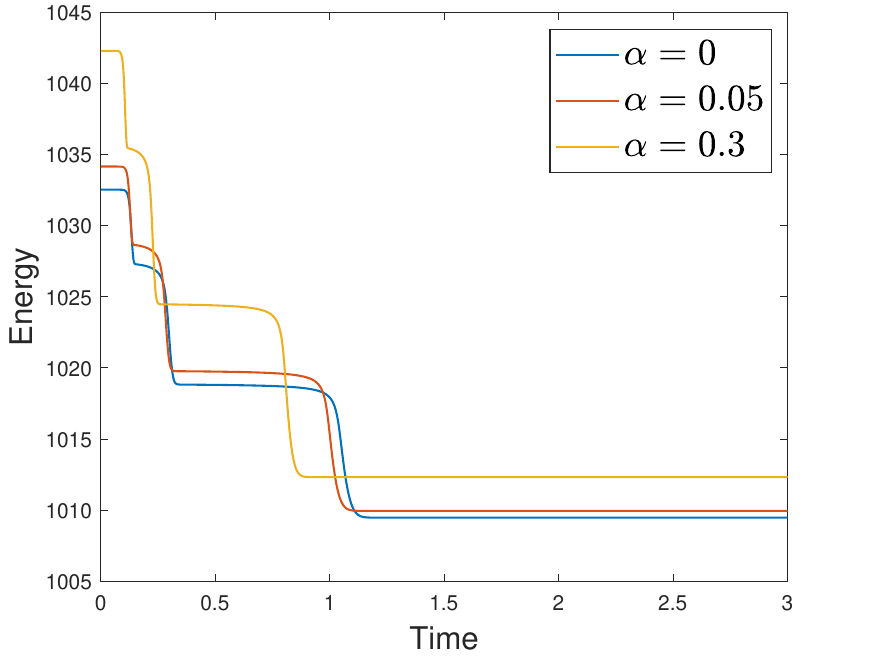}
\includegraphics[width=0.32\textwidth]{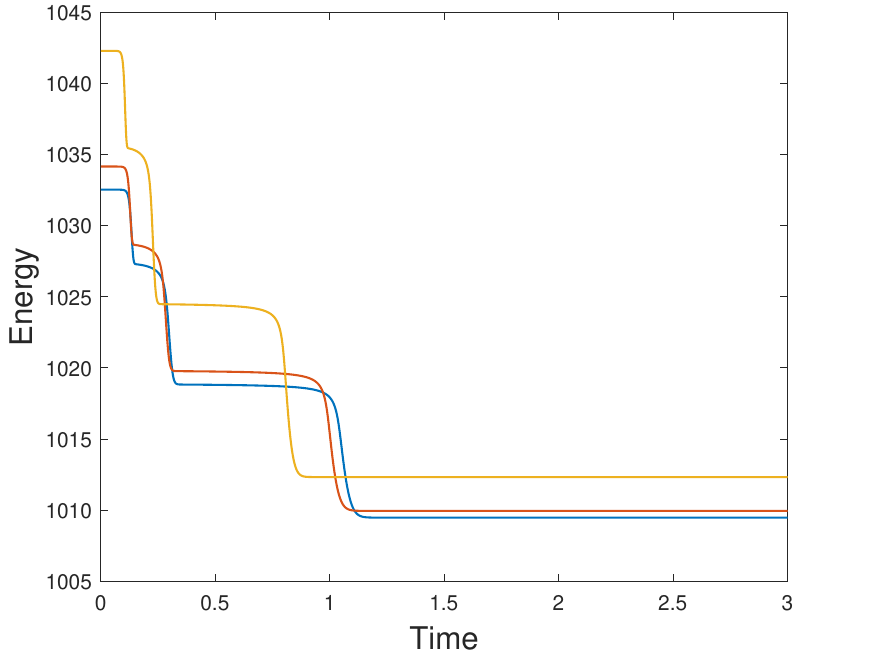}
\includegraphics[width=0.32\textwidth]{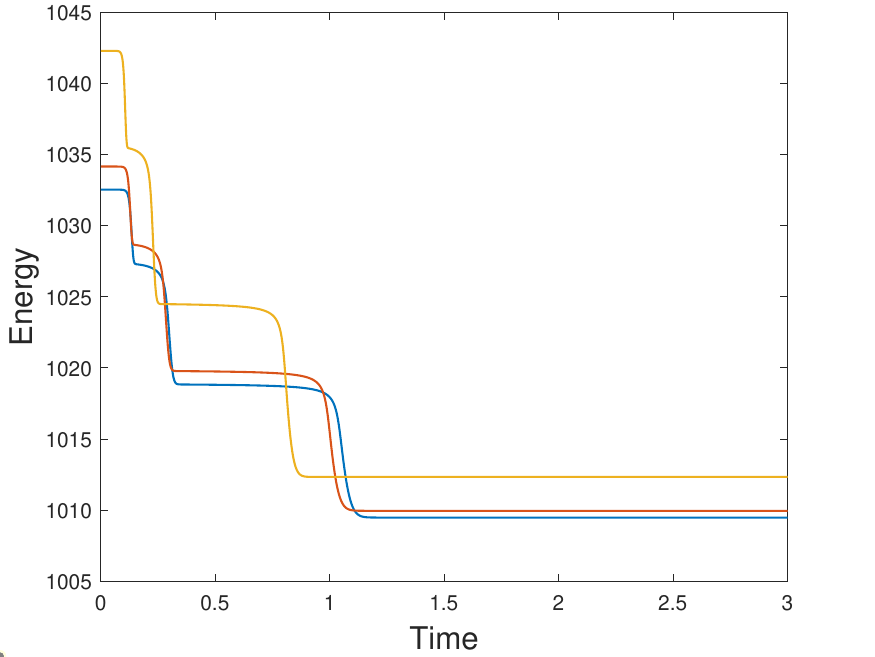}
\textit{(b) The energy evolutions.}
\caption{The relative error of mass and the energy evolutions for $\mathcal{U}_L$-method with different $\theta$:
$\theta=0.5$ (first column), $\theta=0.75$ (second column), $\theta=1$ (last column). The random initial condition \eqref{eqn:initial-condition_2_2} is chosen and the other parameters are selected by \eqref{eqn:parameters}. (a) The relative error of mass with $\tau=1e-3$. (b) The modified energy \eqref{eqn:VE_linear} with $\tau=1e-3$.}
\label{fig:ME_uniform_rand1D}
\end{figure}

\begin{figure}[!htp]
\centering
\includegraphics[width=0.32\textwidth]{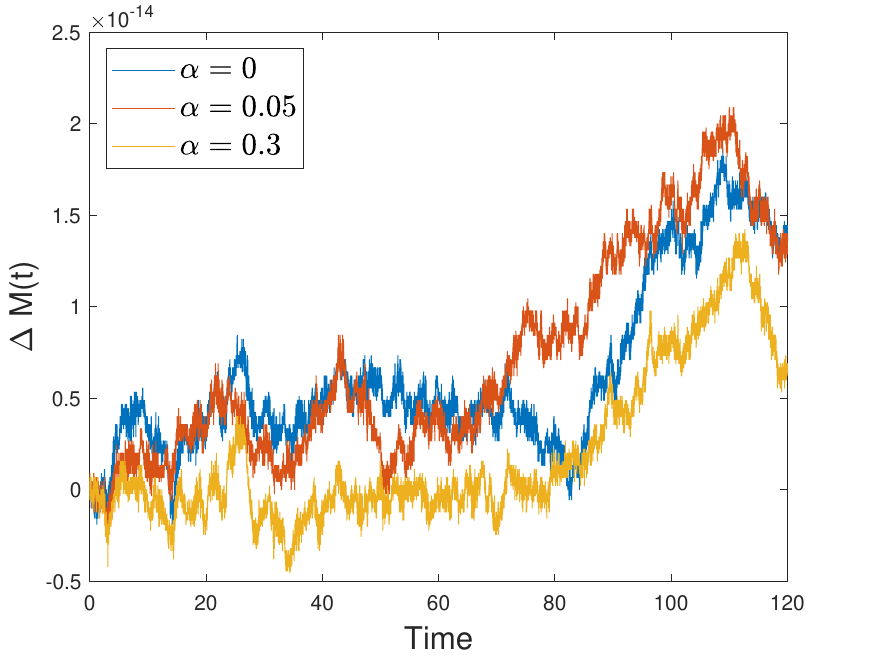}
\includegraphics[width=0.32\textwidth]{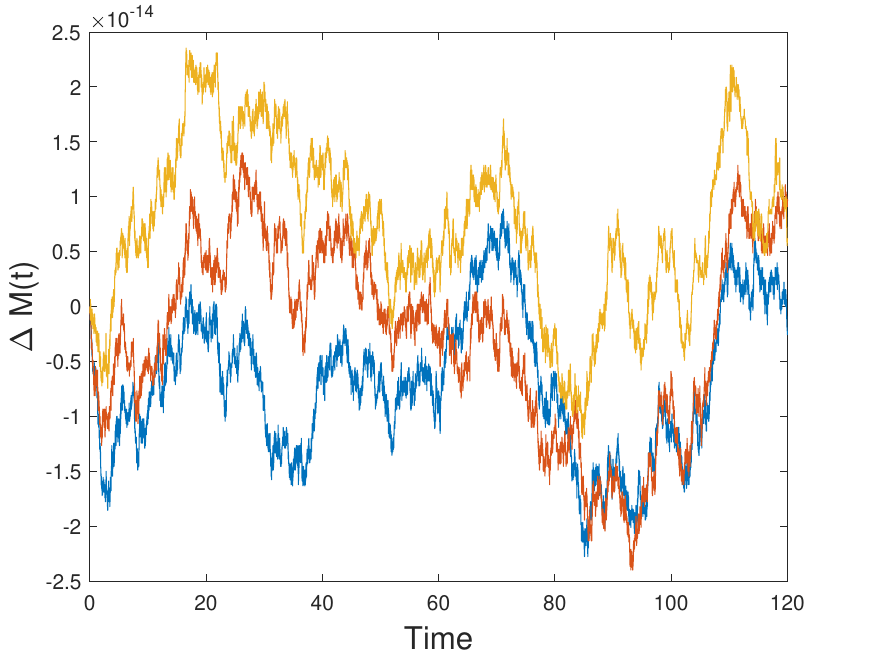}
\includegraphics[width=0.32\textwidth]{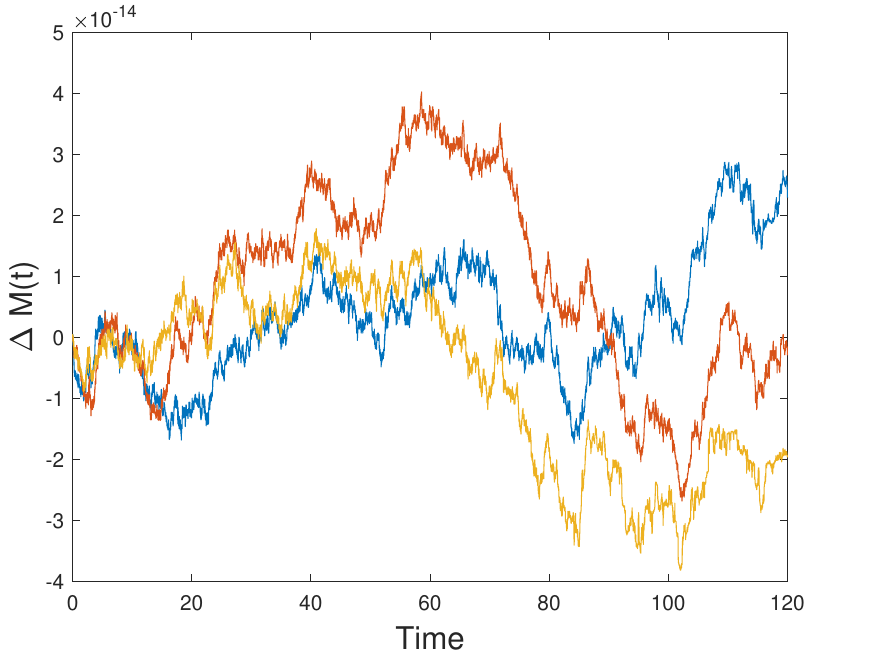}
\textit{(a) The relative error of mass.}
\vspace{\baselineskip} 
\par
\includegraphics[width=0.32\textwidth]{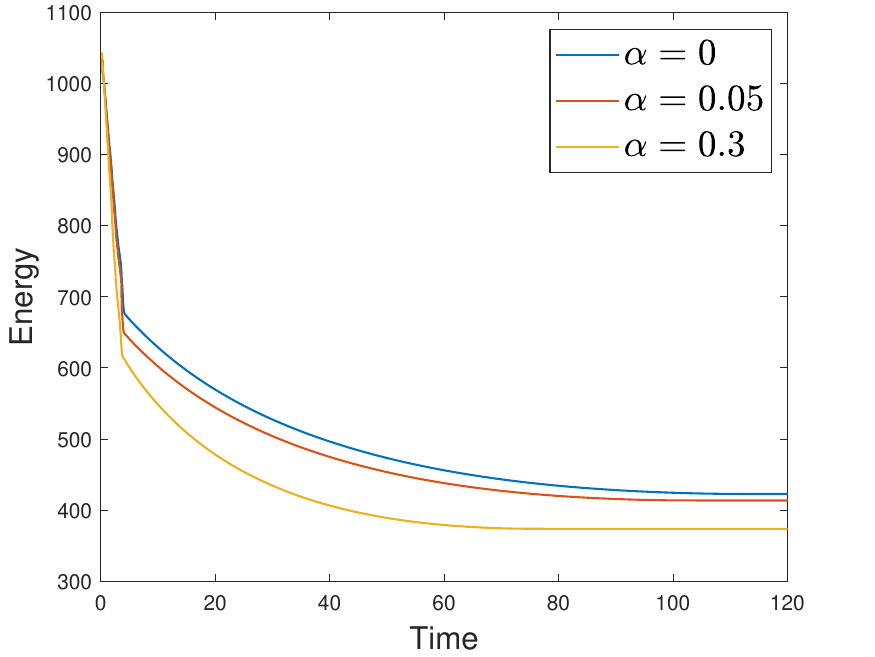}
\includegraphics[width=0.32\textwidth]{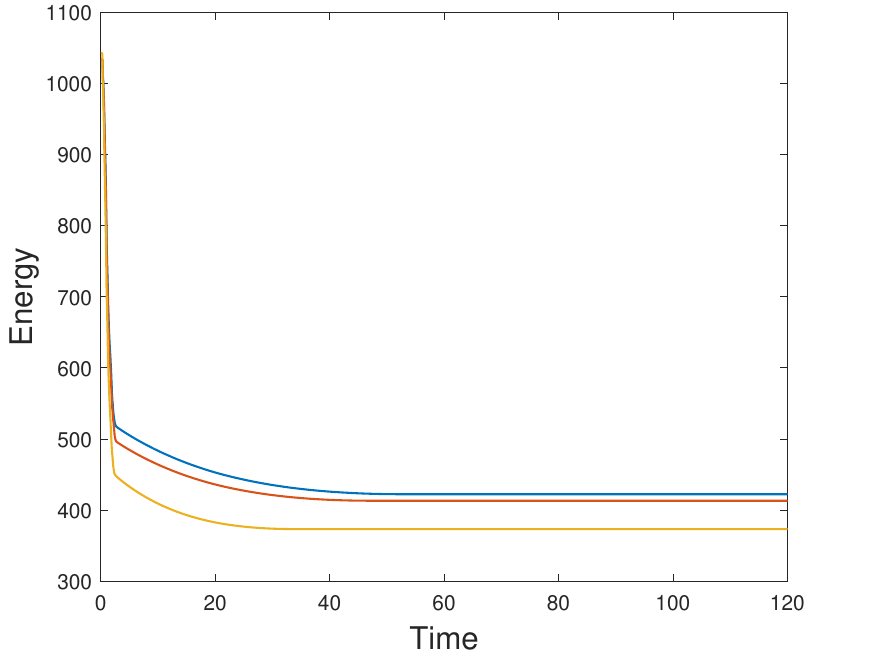}
\includegraphics[width=0.32\textwidth]{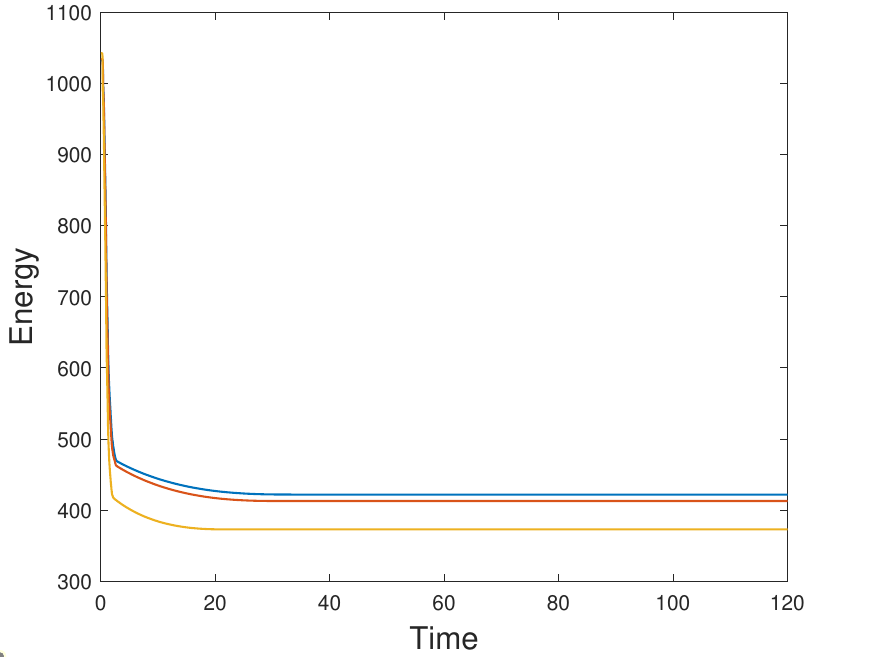}
\textit{(b) The energy evolutions.}
\caption{The relative error of mass and the energy evolutions for $\mathcal{V}_L$-method with different $\theta$:
$\theta=0.5$ (first column), $\theta=0.75$ (second column), $\theta=1$ (last column). The random initial condition \eqref{eqn:initial-condition_2_2} is chosen and the other parameters are selected by \eqref{eqn:parameters}. (a) The relative error of mass with $\tau_{max}=1.0165e-4$. (b) The modified energy \eqref{eqn:VE_linear} with $\tau_{max}=1.0165e-4$.}
\label{fig:ME_VTS_rand1D}
\end{figure}


\subsection{ Numerical simulations in 2D}

In this subsection, we perform numerical simulations, include structure-preservation of the solutions and temporal evolution, for the linear regularization model with $\mathcal{U}_L$-method \eqref{eqn:Uniform-WBDF2-Linear} and $\mathcal{V}_L$-method \eqref{eqn:VTS-WBDF2-Linear} in 2D. Here, the computational domain is defined as $\Omega=[0,2\pi]\times[0,2\pi]$ with the mesh size $N_x=128,Ny=128$.

\textbf{Example 3.} In this example, we numerical simulate the evolution of two circles, and the initial condition is given by
\begin{align}\label{eqn:initial_condition_3_1}
	\phi(x,y,0)=\sum_{i=1}^{2}-tanh(\frac{\sqrt{(x-x_i)^2+(y-y_i)^2}-r_i}{1.2\epsilon})+1,
\end{align}
where $(x_1,y_1,r_1)=(\pi-0.7,\pi-0.6,1.5)$ and $(x_1,y_1,r_1)=(\pi+1.65,\pi+1.6,0.7)$.

\begin{figure}[!htp]
\centering
\includegraphics[width=0.32\textwidth]{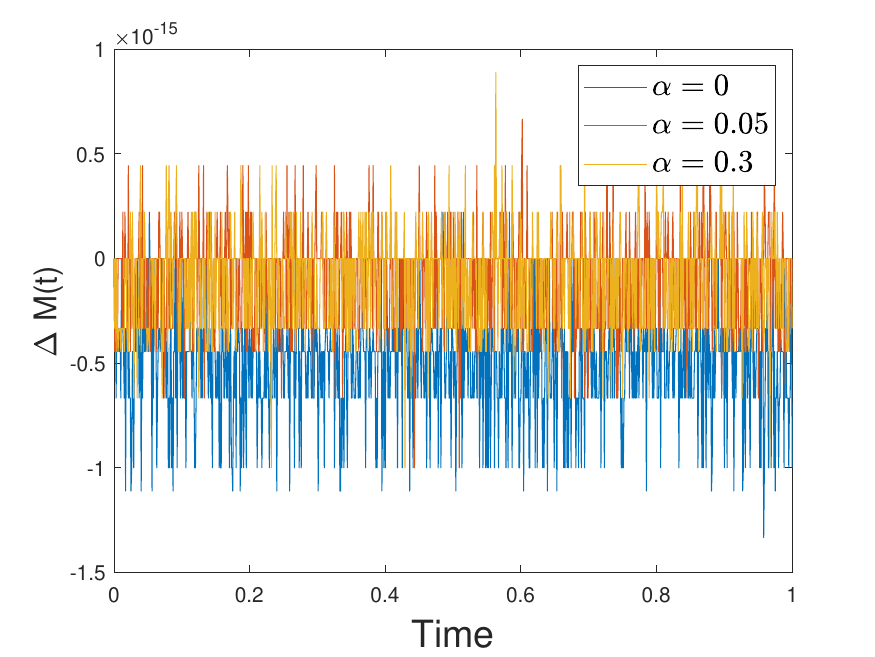}
\includegraphics[width=0.32\textwidth]{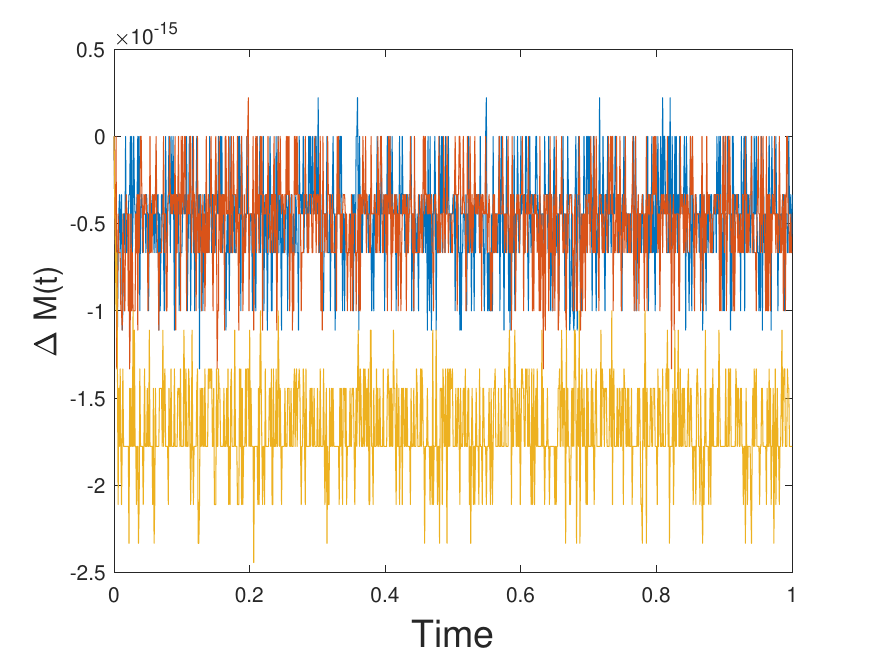}
\includegraphics[width=0.32\textwidth]{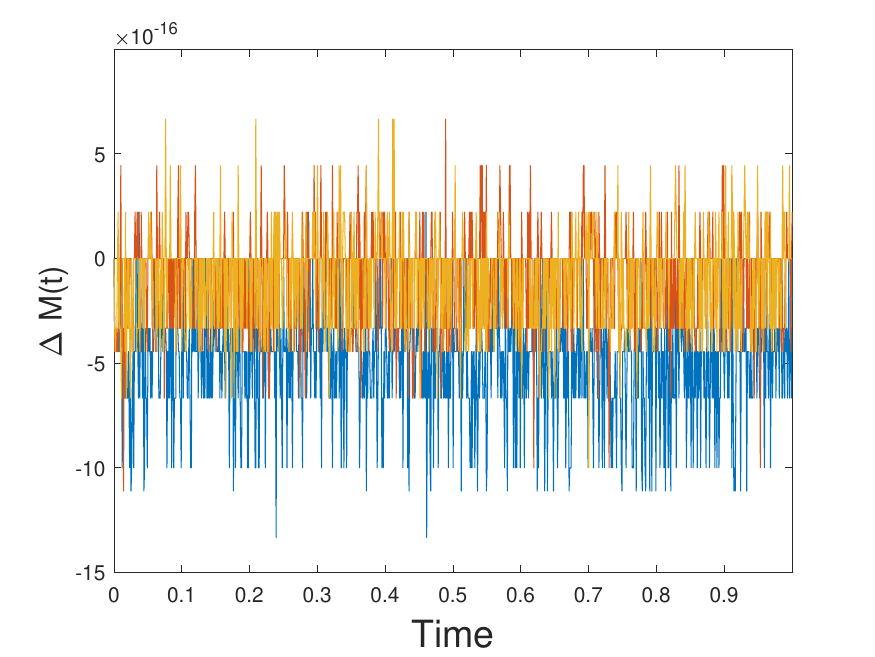}
\textit{(a) The relative error of mass.}
\vspace{\baselineskip} 
\par
\includegraphics[width=0.32\textwidth]{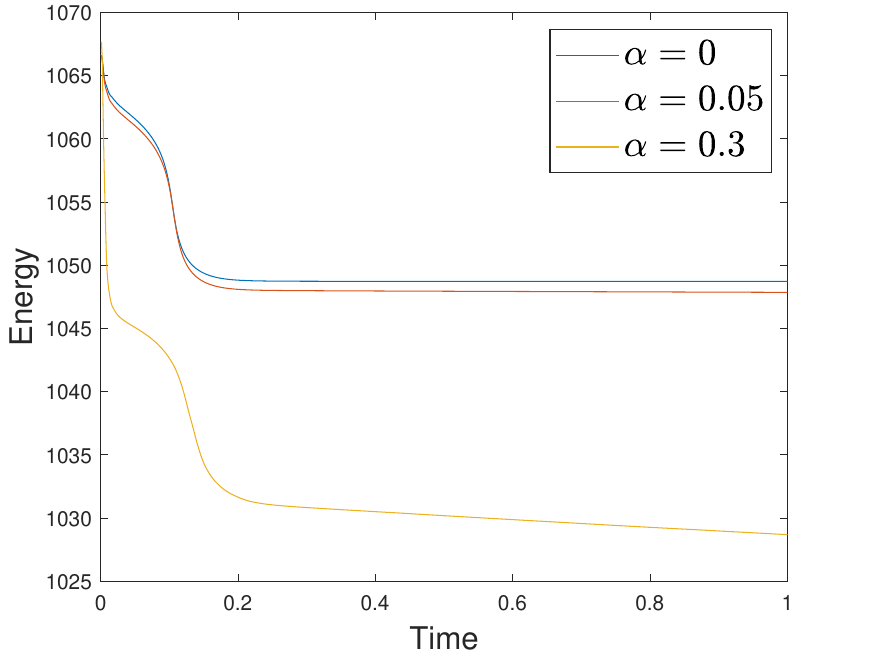}
\includegraphics[width=0.32\textwidth]{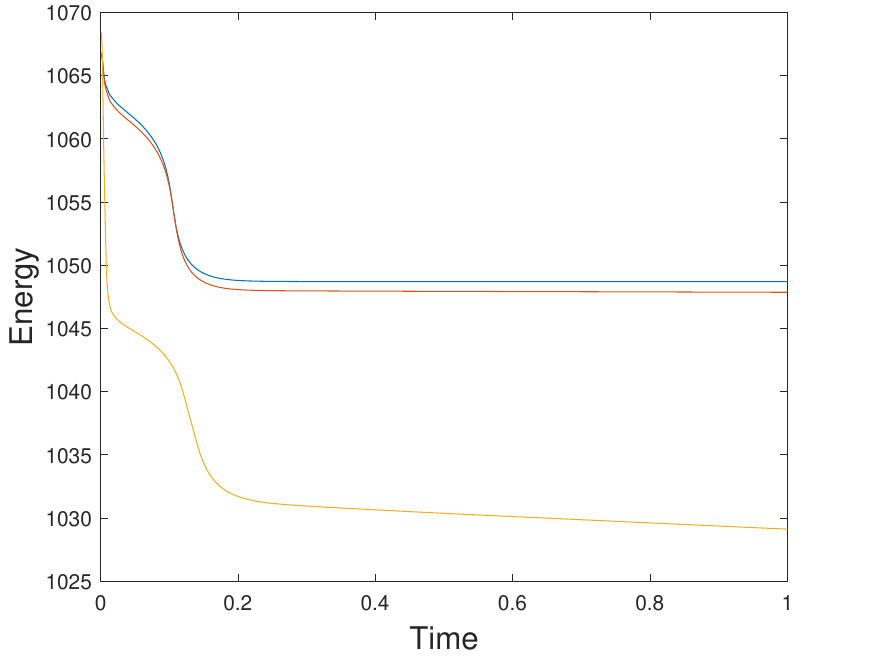}
\includegraphics[width=0.32\textwidth]{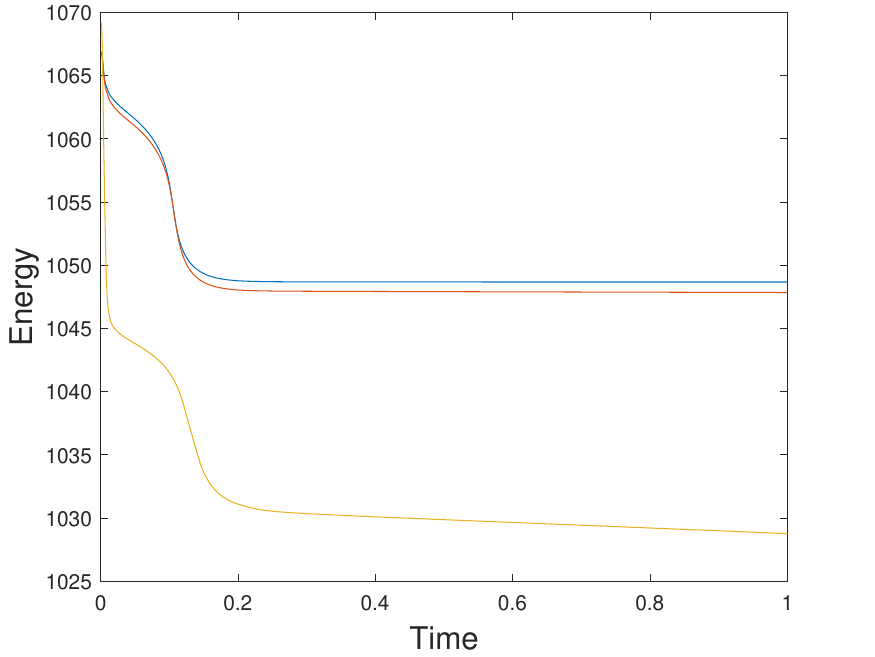}
\textit{(b) The energy evolutions.}
\caption{The relative error of mass and the energy evolutions for $\mathcal{U}_L$-method with different $\theta$:
$\theta=0.5$ (first column), $\theta=0.75$ (second column), $\theta=1$ (last column). The random initial condition \eqref{eqn:initial-condition_2_2} is chosen and the other parameters are selected by \eqref{eqn:parameters}. (a) The relative error of mass with $\tau=1e-3$. (b) The modified energy \eqref{eqn:VE_linear} with $\tau=1e-3$.}
\label{fig:ME_uniform_2D}
\end{figure}

\begin{figure}[!htp]
	\centering
	\includegraphics[width=0.32\textwidth]{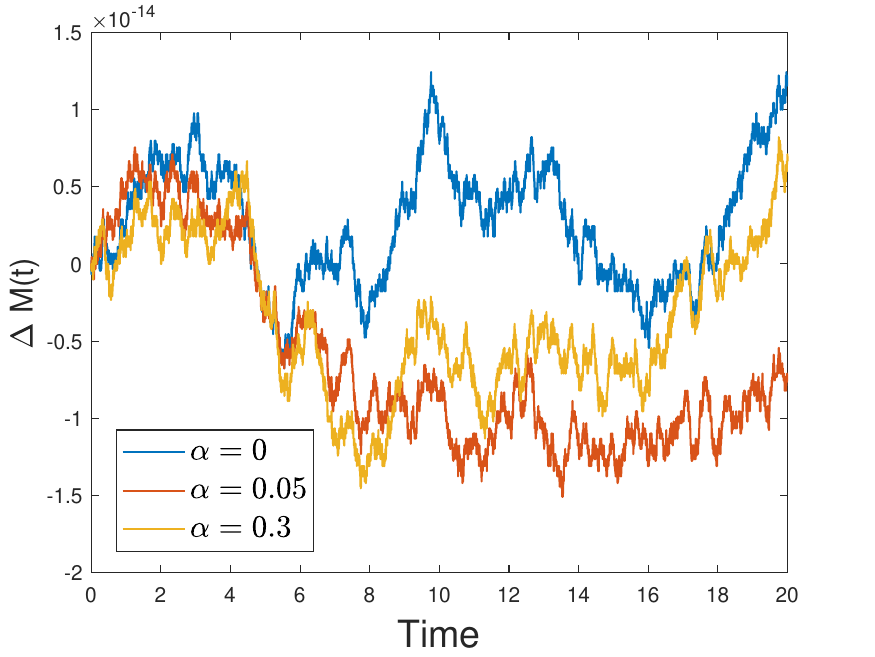}
	\includegraphics[width=0.32\textwidth]{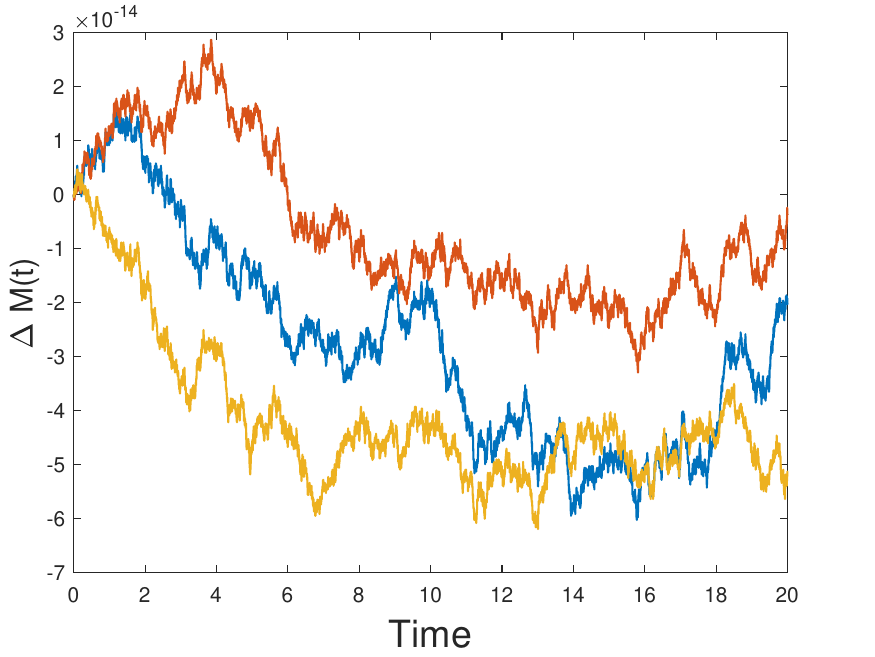}
	\includegraphics[width=0.32\textwidth]{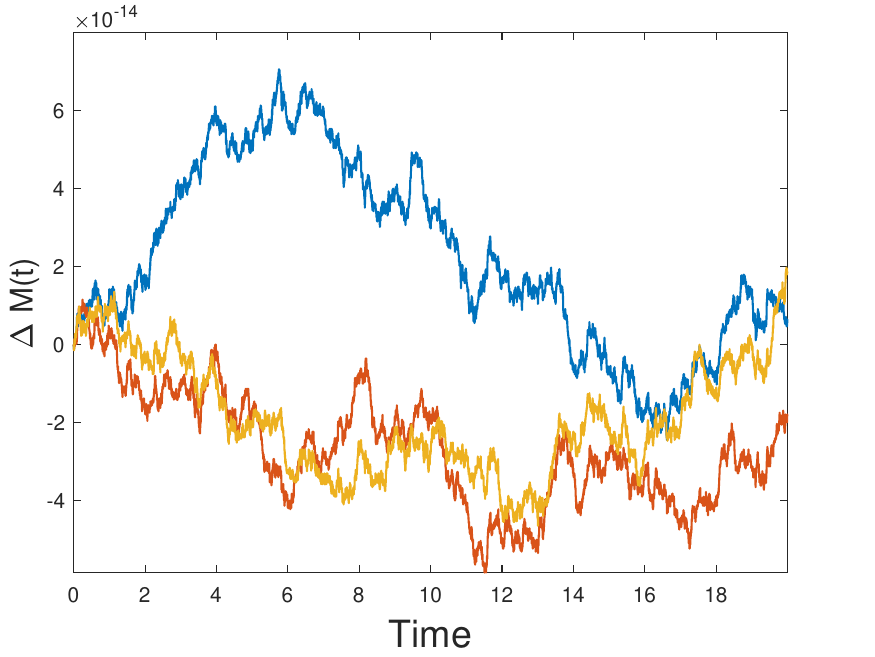}
	\textit{(a) The relative error of mass.}
	\vspace{\baselineskip} 
	\par
	\includegraphics[width=0.32\textwidth]{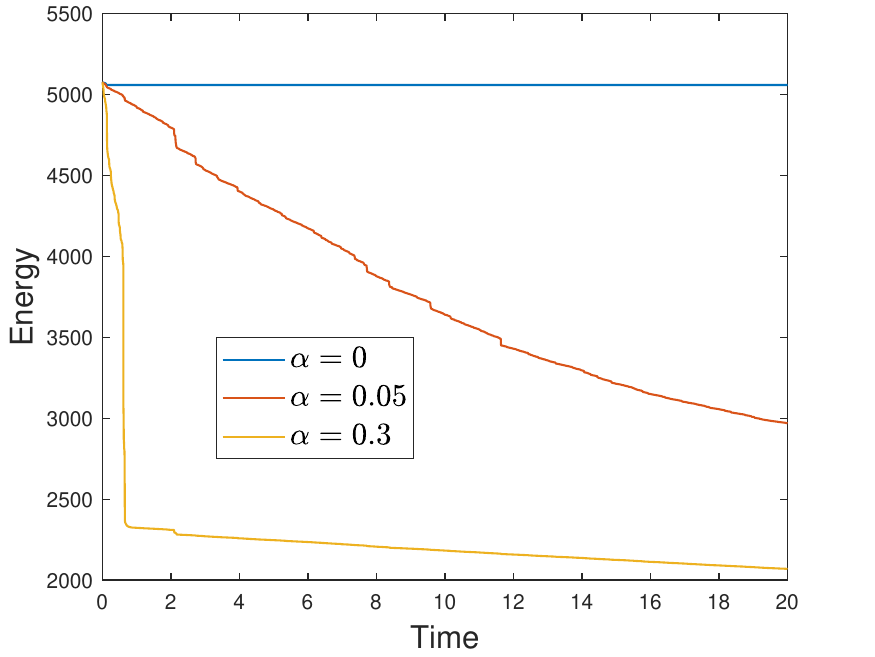}
	\includegraphics[width=0.32\textwidth]{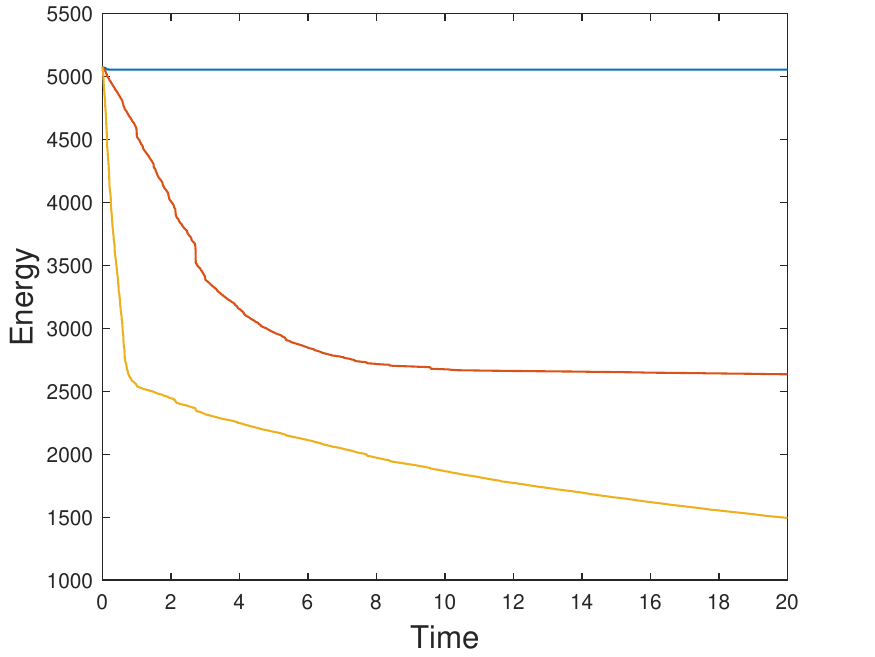}
	\includegraphics[width=0.32\textwidth]{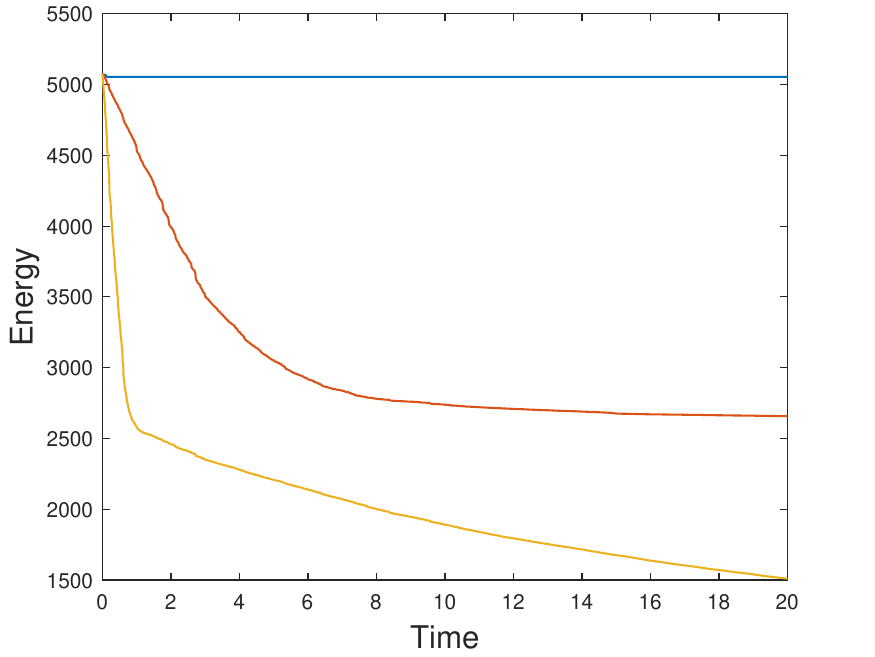}
	\textit{(b) The energy evolutions.}
	\caption{The relative error of mass and the energy evolutions for $\mathcal{V}_L$-method with different $\theta$:
		$\theta=0.5$ (first column), $\theta=0.75$ (second column), $\theta=1$ (last column). The random initial condition \eqref{eqn:initial-condition_2_2} is chosen and the other parameters are selected by \eqref{eqn:parameters}. (a) The relative error of mass with $\tau=1e-3$. (b) The modified energy \eqref{eqn:VE_linear} with $\tau=1e-3$.}
	\label{fig:ME_VTS_2D}
\end{figure}

\begin{figure}[!htp]
	\centering
	\includegraphics[width=\textwidth]{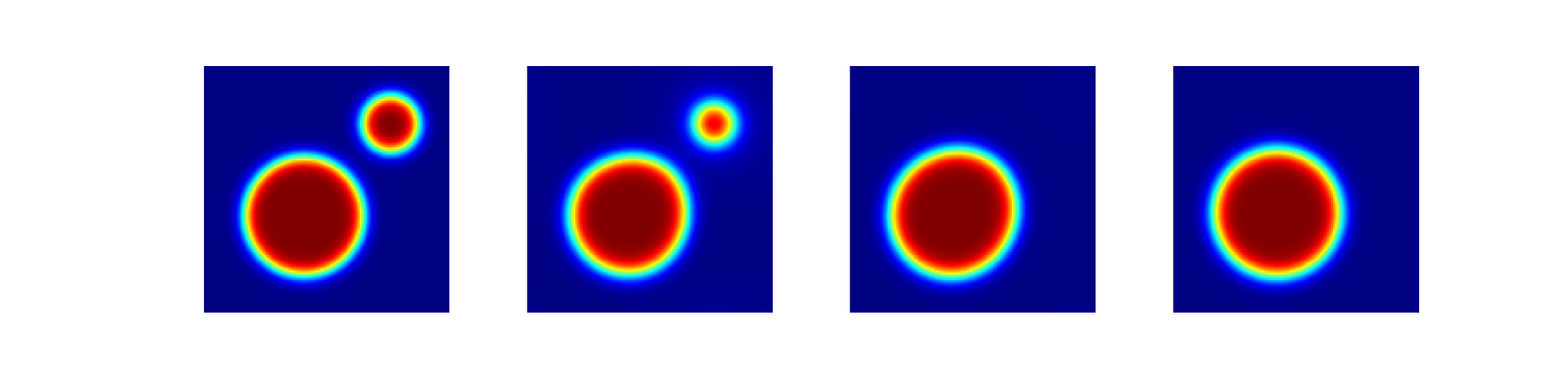}
	\par
	\includegraphics[width=\textwidth]{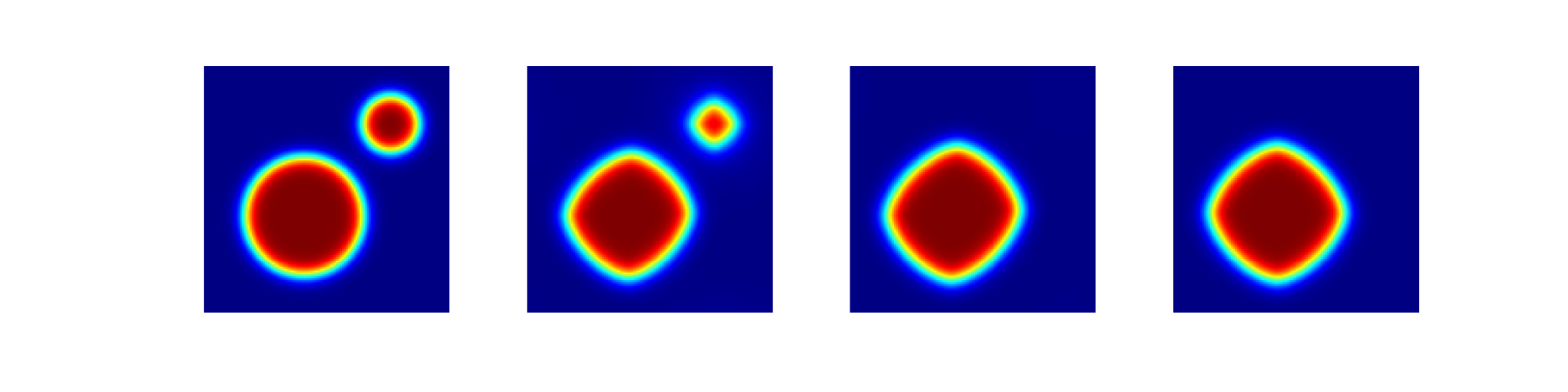}
	\par
	\includegraphics[width=\textwidth]{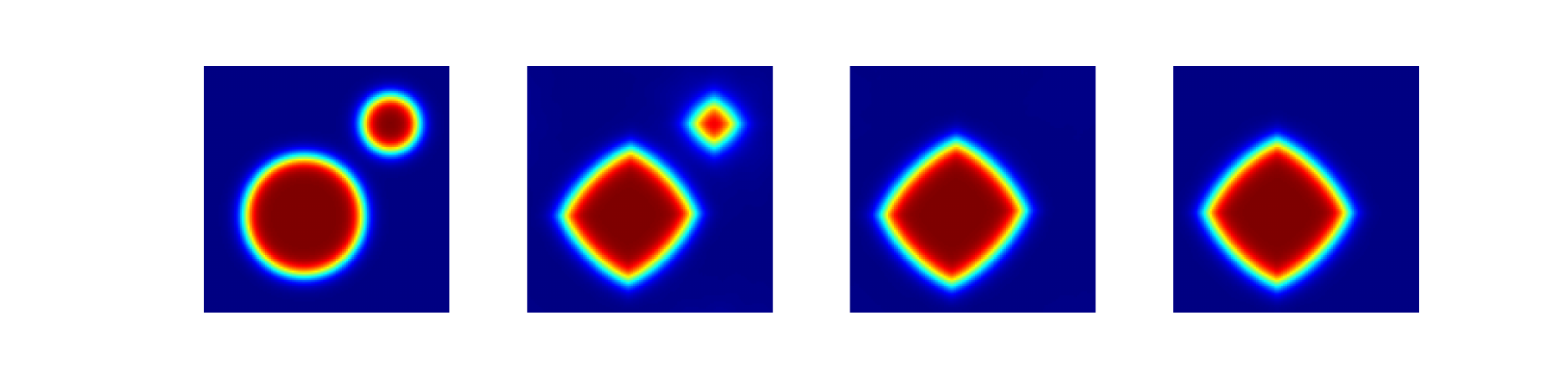}
	\caption{Snapshots of $\phi$ are taken at $0, 6.5e-2, 1.99e-1$ and $2$ for $\mathcal{U}_L$-method with $\tau=1e-3$. The weighted parameter $\theta=0.75$ and the other parameters are selected by \eqref{eqn:parameters}. (a) The 2D dynamical evolution of the phase variable $\phi$ with $\alpha=0$. (b) The 2D dynamical evolution of the phase variable $\phi$ with $\alpha=0.05$. (c) The 2D dynamical evolution of the phase variable $\phi$ with $\alpha=0.1$.}
	\label{figure:snapshot_uniform}
\end{figure}

\begin{figure}[!htp]
	\centering
	\includegraphics[width=\textwidth]{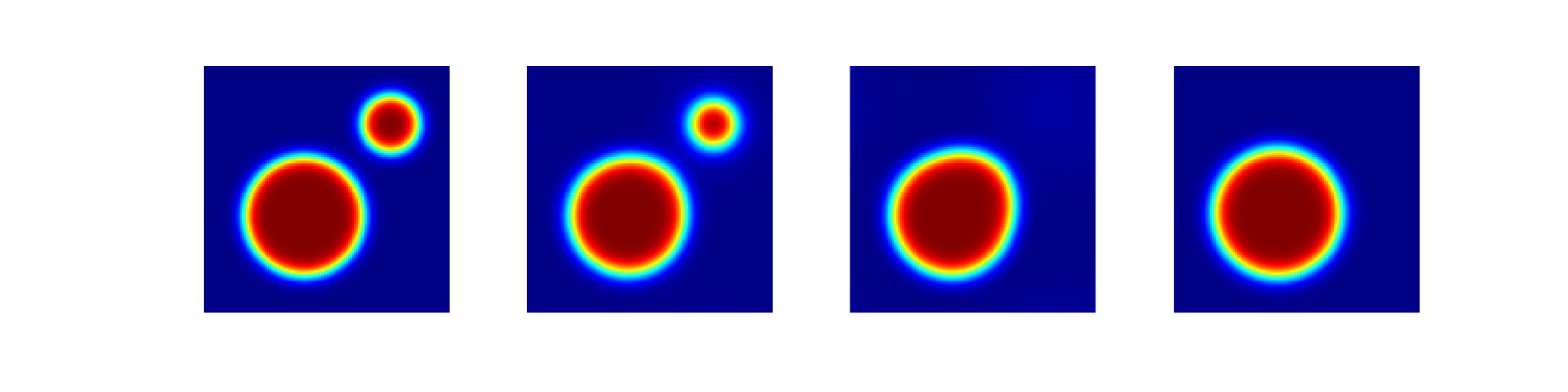}
	\par
	\includegraphics[width=\textwidth]{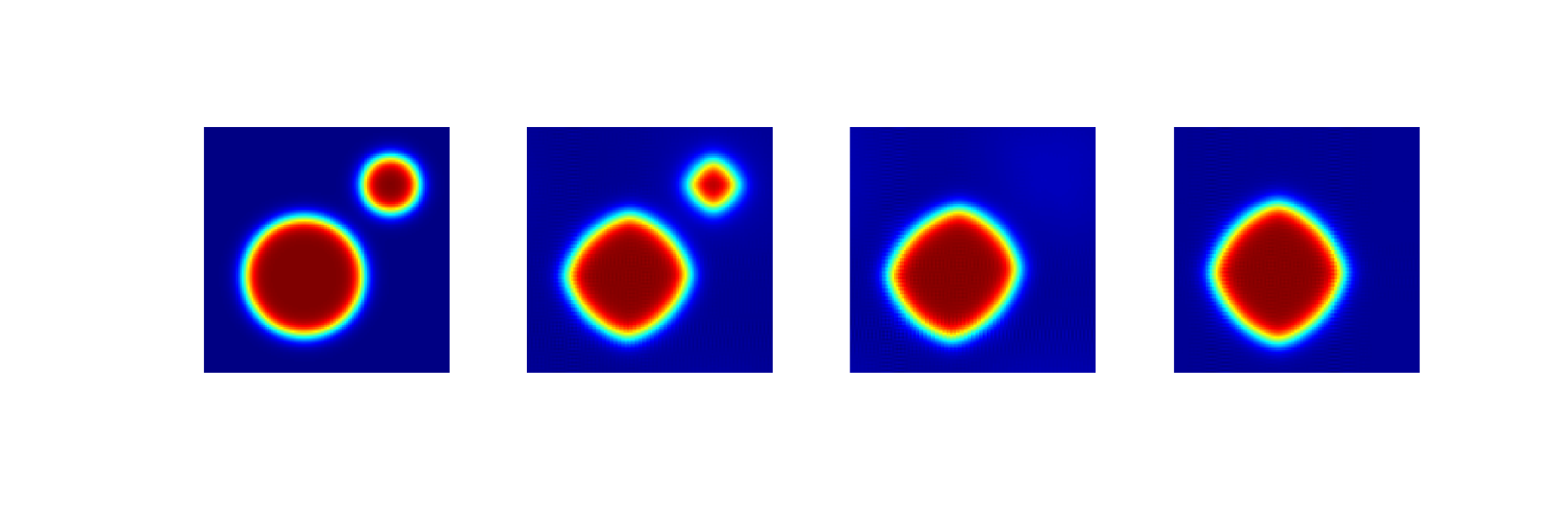}
	\par
	\includegraphics[width=\textwidth]{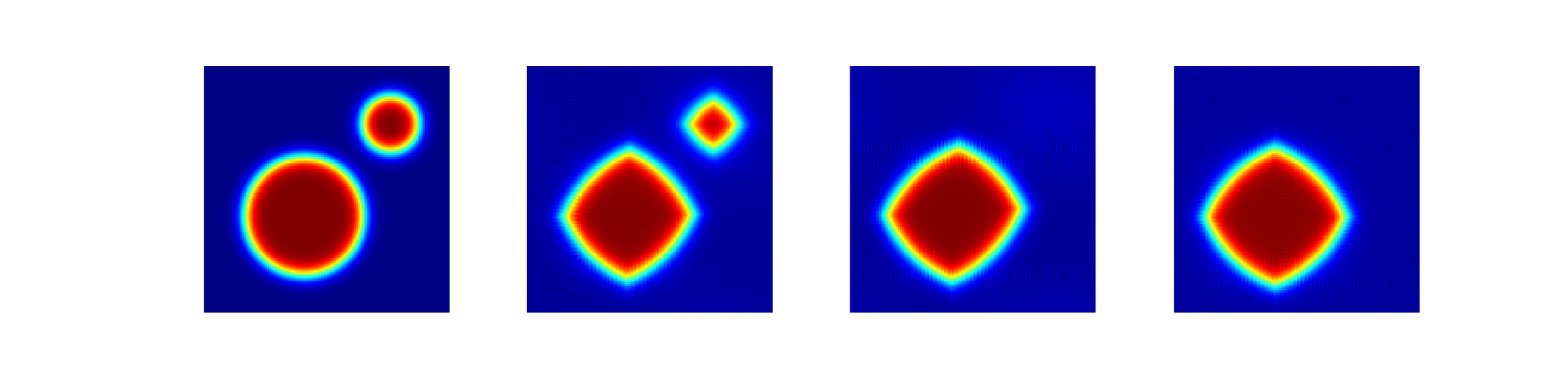}
	\caption{Snapshots of $\phi$, for $\mathcal{V}_L$-method, are taken at $0, 6.5e-2, 1.99e-1$ and $2$ with $\tau=1e-3$, where initial condition \eqref{eqn:initial_condition_3_2}. The weighted parameter $\theta=0.75$ and the other parameters are selected by \eqref{eqn:parameters}. (a) The 2D dynamical evolution of the phase variable $\phi$ with $\alpha=0$. (b) The 2D dynamical evolution of the phase variable $\phi$ with $\alpha=0.05$. (c) The 2D dynamical evolution of the phase variable $\phi$ with $\alpha=0.1$.}
	\label{figure:snapshot_VTS}
\end{figure}

In figures \ref{fig:ME_uniform_2D} and \ref{fig:ME_VTS_2D}, we plot the relative errors of mass and energy with the initial condition \eqref{eqn:initial_condition_3_1} for  $\mathcal{U}_L$-method and $\mathcal{V}_L$-method. which indicate that both these methods maintain structure-preserving properties. Additionally, snapshots of the profiles of the phase-field variable $\phi$ with various anisotropy intensity at different times for $\mathcal{U}_L$-method and $\mathcal{V}_L$-method in Figures \ref{figure:snapshot_uniform} and \ref{figure:snapshot_VTS}. We observe that a coarsening effect in which the smaller circle is absorbed by the larger circle for isotropic system. Furthermore, for isotropic system, the two circles initially evolve into anisotropic shapes, missing orientation at the four corners, followed by the coarsening of the anisotropic system, resulting in the disappearance of the smaller shape. In addition, as the intensity of anisotropy increases, the equilibrium shapes tend to become pyramids with sharper angles from the Figures \ref{figure:snapshot_uniform} and \ref{figure:snapshot_VTS}.

 \textbf{Example 4.}
 In this example, we adopt a random initial condition to simulate the free energy evolution of both an isotropic model and an anisotropic model, and present the temporal evolution of the solutions. Here, we use $\mathcal{U}_L$-method for numerical simulation, and the following random initial condition is chosen below:
 \begin{align}\label{eqn:initial_condition_3_2}
 	\phi(x,y,0)=-0.5+0.001rand(x,y).
 \end{align}

 \begin{figure}[!htp]
 	\centering
 	\includegraphics[width=\textwidth]{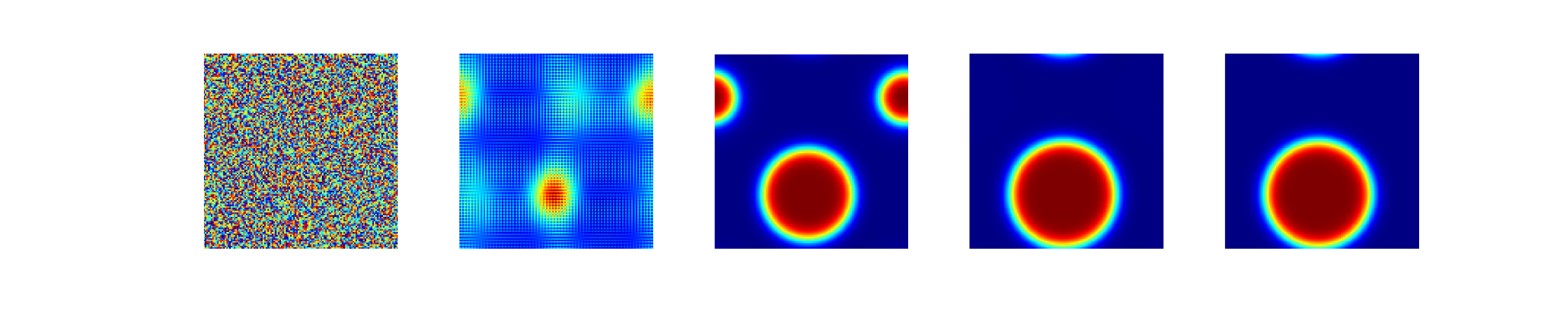}
 	\textit{(a) Snapshots of $\phi$.}\par
 	\vspace{\baselineskip} 
 	\includegraphics[width=0.5\textwidth]{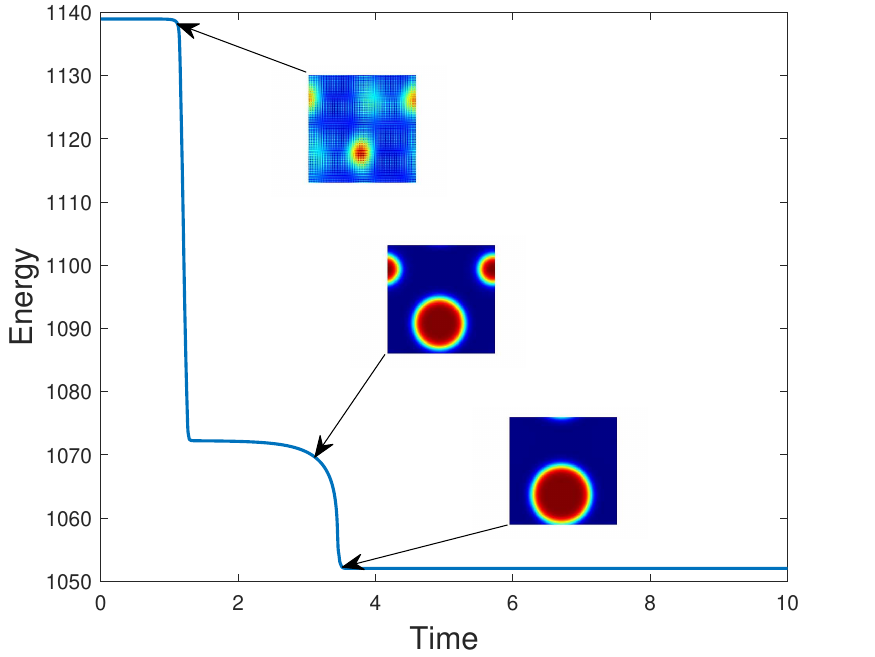}\par
 	\textit{(b) Evolutions of free energy}
 	\caption{(a) The 2D dynamical evolution of the phase variable $\phi$ for the isotropic model ($\alpha=0$) with the linear regularization with the random initial condition \ref{eqn:initial_condition_3_2}, and snapshots are taken at $0, 1.115, 3.12, 3.52$ and $10$.  (b) Time evolution of the free energy \eqref{eqn:UE_linear}.}
 	\label{fig:U_SE_02D}
 \end{figure}

\begin{figure}[!htp]
	\centering
	\includegraphics[width=\textwidth]{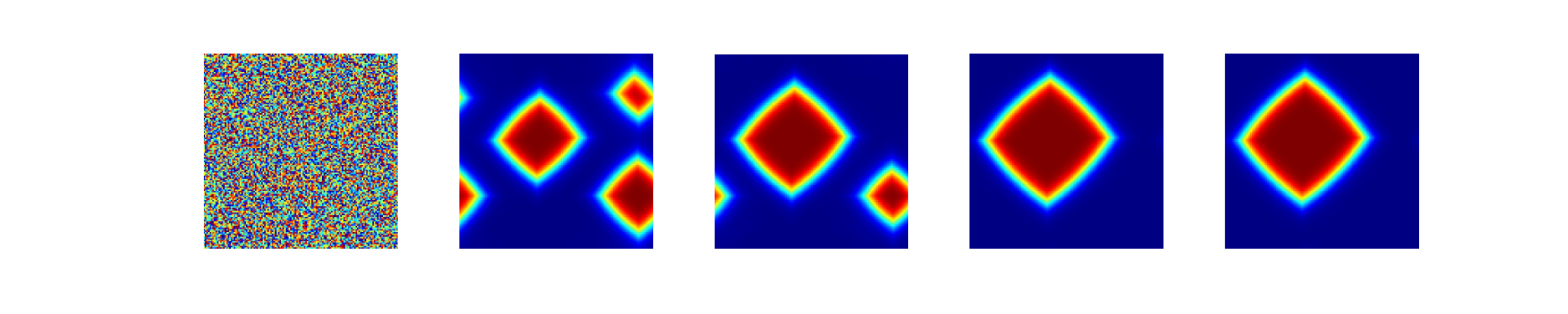}
	\textit{(a) Snapshots of $\phi$.}\par
	\vspace{\baselineskip} 
	\includegraphics[width=0.5\textwidth]{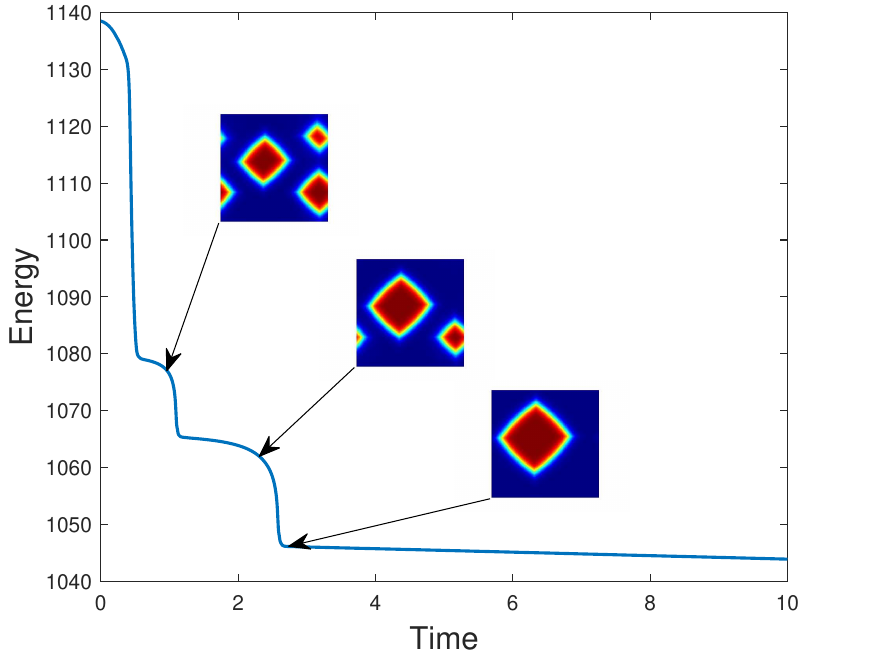}\par
	\textit{(b) Evolutions of free energy}
	\caption{(a) The 2D dynamical evolution of the phase variable $\phi$ for the anisotropic model ($\alpha=0.2$) with the linear regularization with the random initial condition \ref{eqn:initial_condition_3_2}, and snapshots are taken at $0, 0.965, 2.31, 2.74$ and $10$.  (b) Time evolution of the free energy \eqref{eqn:UE_linear}.}
	\label{fig:U_SE_022D}
\end{figure}

 We set the time step $\tau=5e-2$ with the weighted parameter $\theta=0.75$, and select the other parameters by \eqref{eqn:parameters}. For isotropic system, from Figure \ref{fig:U_SE_02D}, we can observe that snapshot of solution with random initial value evolve into multiple circles following the first rapid decline in free energy function. After the second rapid decline, the free energy function reaches a steady state, and the snapshots transform into a single circle. Additionally, in Figure \ref{fig:U_SE_022D}, We see the combined effects of anisotropy and coarsening as time evolves. The free energy function undergoes multiple rapid declines to reach a steady state. Ultimately, the snapshot of $\phi$ evolve into a pyramid at the steady state.


\section{Conclusions}\label{section_6}
In this work, we have proposed WSBDF2 method combined stabilization technique on uniform/nonuniform temporal mesh to solve anisotropic CH models with linear and Willmore regularization, respectively. On uniform temporal mesh, we have employed uniform-time-step WSBDF2 method combined with the traditional SAV approach, and different numerical schemes can be obtained by adjusting the weighted parameter $\theta$. Compared to th existing uniform numerical methods for anisotropic CH models, the proposed WSBDF2 scheme on nonuniform temporal mesh not only well capture the dynamics of the solutions as well as provide a new approach to easily adopt adaptive time stepping methods in future, but also deduce different schemes by change the value of parameter $\theta$. Moreover, the structure-preserving properties of our proposed schemes are rigorously proved.  Finally, extensive numerical experiments validate the correctness and effectiveness of the proposed schemes in theory.

\section{Acknowledgements}
The work is supported by the China
Postdoctoral Science Foundation (No.2023T160589),
National Natural Science Foundation of China (Nos. 11801527,12001499,11971416), Natural Science Foundation of Henan Province (No. 222300420256), Training Plan of Young Backbone Teachers in Colleges of Henan Province (No. 2020GGJS230),  Henan University Science and Technology Innovation Talent support program (No. 19HASTIT025).

\bibliographystyle{elsarticle-num}

\bibliography{thebib}

\end{document}